\definecolor{darkgreen}{rgb}{0,0.5,0}
\definecolor{darkred}{rgb}{0.7,0,0}
\theoremstyle{plain}
\newtheorem{lemma}{Lemma}[section]
\newtheorem{thm}[lemma]{Theorem}
\newtheorem{prop}[lemma]{Proposition}
\newtheorem{cor}[lemma]{Corollary}
\theoremstyle{definition}
\newtheorem{rmk}[lemma]{Remark}
\numberwithin{equation}{section}
\newcommand{\pl}[2]{{\frac{\partial #1}{\partial #2}}}
\newcommand{\partt}{ {\frac{\partial}{\partial t} } }
\newcommand{\ti}{\tilde}
\newcommand{\al}{\alpha}
\newcommand{\be}{\beta}
\newcommand{\ga}{\gamma}
\newcommand{\de}{\delta}
\newcommand{\om}{\omega}
\newcommand{\Om}{\Omega}
\newcommand{\ka}{\kappa}
\newcommand{\la}{\lambda}
\newcommand{\si}{\sigma}
\newcommand{\ep}{\varepsilon}
\newcommand{\R}{\ensuremath{{\mathbb R}}}
\newcommand{\N}{\ensuremath{{\mathbb N}}}
\newcommand{\curlO}{{\mathcal O}}
\newcommand{\downto}{\downarrow}
\newcommand{\upto}{\uparrow}
\newcommand{\lap}{\Delta}
\newcommand{\grad}{\nabla}
\newcommand{\intersect}{\cap}
\DeclareMathOperator{\Vol}{Vol}
\DeclareMathOperator{\VolB}{VolB}
\DeclareMathOperator{\inj}{inj}
\def\blbox{\quad \vrule height7.5pt width4.17pt depth0pt}
\newcommand{\beq}{\begin{equation}}
\newcommand{\eeq}{\end{equation}}
\newcommand{\beqa}{\begin{equation}\begin{aligned}}
\newcommand{\eeqa}{\end{aligned}\end{equation}}
\newcommand{\brmk}{\begin{rmk}}
\newcommand{\ermk}{\end{rmk}}
\newcommand{\partref}[1]{\hbox{(\csname @roman\endcsname{\ref{#1}})}}
\newcommand{\half}{\frac{1}{2}}
\newcommand{\cmt}[1]{\opt{draft}{\textcolor[rgb]{0.5,0,0}{
$\LHD$ #1 $\RHD$\marginpar{\blbox}}}}
\newcommand{\bcmt}[1]{\opt{draft}{\textcolor[rgb]{0,0,0.9}{
$\LHD$ #1 $\RHD$\marginpar{\blbox}}}}
\newcommand{\Rm}{{\mathrm{Rm}}}
\newcommand{\Ric}{{\mathrm{Ric}}}
\newcommand{\Rc}{{\mathrm{Rc}}}
\newcommand{\Sc}{{\mathrm{R}}}
\newcommand{\f}{\ensuremath{{\cal F}}}
\title{{
\bf
Local control on the geometry\\ in 3D Ricci flow
} 
\\ 
\cmt{DRAFT with comments}
}
\author{Miles Simon and Peter M. Topping}
\date{\today}
\begin{document}

\maketitle
\parskip=10pt

\begin{abstract}
The geometry of a ball within a Riemannian manifold is coarsely controlled if it has a lower bound on its Ricci curvature and a positive lower bound on its volume. 
We prove that such coarse local geometric control 
must persist for a definite amount of time under
three-dimensional Ricci flow, and leads to local $C/t$ decay of the full curvature tensor, irrespective of what is happening beyond the local region.

As a by-product, our results generalise the Pseudolocality theorem of Perelman \cite[\S 10.1 and \S 10.5]{P1} and 
Tian-Wang \cite{TW} in this dimension by not requiring the Ricci curvature to be almost-positive, and not asking the volume growth to be almost-Euclidean. 

Our results also have applications to the topics of starting Ricci flow with manifolds of unbounded curvature, to the use of Ricci flow as a mollifier, and to the well-posedness of Ricci flow starting with Ricci limit spaces. In \cite{ST2} we use results from this paper to prove that 3D Ricci limit spaces are locally bi-H\"older equivalent to smooth manifolds, going beyond a full resolution of the conjecture of Anderson, Cheeger, Colding and Tian in this dimension.
\end{abstract}

\parskip=-9pt
\tableofcontents
\parskip=10pt

\section{Statement of the main result}
\label{intro}

Given a complete $n$-dimensional Riemannian manifold $(M,g)$ and a point $x_0\in M$, it is well-known by the theorem of Bishop-Gromov that if $\Ric\geq 0$ in $B_g(x_0,1)$ and the volume of 
$B_g(x_0,1)$ is equal to the volume $\om_n$ of the unit ball in $n$-dimensional Euclidean space (i.e. the largest it can be) then 
$B_g(x_0,1)$ is isometric to the Euclidean unit ball.
More generally, if we only have 
\beq
\label{almostEucl}
\Ric\geq -\ep\quad\text{ in }B_g(x_0,1)\qquad\text{ and }\qquad\VolB_g(x_0,1)\geq (1-\ep)\om_n,
\eeq
for some small $\ep>0$, then $B_g(x_0,1)$ can be considered to be \emph{almost Euclidean}, cf. \cite[Theorem 0.8]{colding_vol_cgnce}. 

According to Perelman's celebrated pseudolocality result, as we describe in a moment, the Ricci flow evolution of $g$, governed by the equation
$$\pl{g}{t}=-2\Ric_{g(t)},$$
protects almost-Euclidean regions, and we obtain estimates on the curvature of the flow at the centre of such regions for a definite amount of time, irrespective of how wild the flow is outside.

In this paper, we consider the behaviour of Ricci flow when confronted by a local region that is not almost-Euclidean, but has the coarser geometric control that the Ricci curvature has \emph{some} lower bound on $B_g(x_0,1)$, and the volume of this ball has \emph{some} positive lower bound. This coarser control completely changes the picture compared with almost-Euclidean control, and in particular we must restrict to the three-dimensional case because lower Ricci bounds are respected in this situation. Our main result is that this coarse geometric control is preserved for a uniform amount of time, and as a by-product we obtain $C/t$ decay of the full curvature tensor.

\begin{thm}[Main theorem]
\label{mainthm}
Suppose that $(M^3,g(t))$ is a complete Ricci flow for $t\in [0,T)$
with bounded curvature 
and $x_0\in M$. 
Suppose further that 
\beq
\label{volume_hyp_main}
\VolB_{g(0)}(x_0,1)\geq v_0>0
\eeq
and 
\beq
\label{Ric_hyp_main}
\Ric_{g(0)}\geq -K<0\qquad\text{on }B_{g(0)}(x_0,1+\si),
\eeq
for some $\si>0$.
Then there exist 
$\tilde T=\tilde T(v_0, K,\si)>0$, $\tilde v_0=\tilde v_0(v_0, K)>0$, $\tilde K=\tilde K(v_0, K,\si)>0$ and $c_0=c_0(v_0,K,\si)<\infty$ 
such that for all $t\in [0,T)\intersect (0,\tilde T)$ we have
\begin{compactenum}[\quad $1$.]
\item
\label{volume_conseq}
$\VolB_{g(t)}(x_0,1)\geq \tilde v_0>0$,
\item
\label{Ric_conseq_main}
$\Ric_{g(t)}\geq -\tilde K<0\qquad\text{on }B_{g(t)}(x_0,1)$,
\item
\label{conc3decay}
$|\Rm|_{g(t)}\leq \frac{c_0}{t}\qquad\text{on }B_{g(t)}(x_0,1)$.
\end{compactenum}
\end{thm}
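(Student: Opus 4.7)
The plan is a continuity / bootstrap argument. Choose constants $\tilde K, \tilde v_0, c_0, \tilde T$ (each depending only on $v_0, K, \si$ in the allowed way) with some slack — for instance $\tilde v_0 = v_0/2$ and $\tilde K$ a fixed multiple of $K$ — and let $\tau$ be the supremum of times $t \in [0, \min(T,\tilde T))$ on which conclusions~1--3 hold with the constants $\tilde v_0, \tilde K, c_0$ \emph{doubled}. If $\tau<\tilde T$, then by continuity one of the three doubled conclusions is marginal at $\tau$, and we want to contradict this by recovering the un-doubled conclusions on all of $[0,\tau]$.

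The engine that drives the bootstrap is a two-way loop:

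\emph{From conclusions 1 and 2 to conclusion 3.} This is the main step, proved by contradiction and point-picking. Suppose a sequence of $3$D Ricci flows satisfying the hypotheses \eqref{volume_hyp_main}, \eqref{Ric_hyp_main} violates the $c_0/t$ bound on $B_{g(t)}(x_0,1)$ for larger and larger $c_0$. A Perelman/Hamilton-style point-picking argument (carried out in the interior of $B_{g(t)}(x_0, 1+\si/2)$, which is accessible thanks to the $\si$-margin in~\eqref{Ric_hyp_main} together with lower-Ricci distance distortion estimates) produces bad spacetime points with curvature blow-up rate at least $1/t$. Parabolically rescaling so that $|\Rm|=1$ at the bad points, one obtains a sequence of Ricci flows with curvature controlled on arbitrarily large parabolic neighbourhoods. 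Conclusion 1 rescales to Euclidean-volume-lower-bound on unit balls (non-collapsing), conclusion 2 rescales to $\Ric\geqs 0$, and local Shi-type estimates give derivative control. Hamilton--Cheeger--Gromov compactness then yields a smooth, complete, non-collapsed, non-flat ancient $3$D Ricci flow with $\Ric\geqs 0$ and $|\Rm|(0,\text{basepoint})=1$, contradicting the classification of such solutions (they must split off an $\R$-factor and reduce to Euclidean space).

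\emph{From conclusion 3 (plus the initial data) to conclusions 1 and 2.} Granted $|\Rm|_{g(t)}\leqs c_0/t$ on an evolving ball slightly larger than $B_{g(t)}(x_0,1)$, the lower Ricci bound propagates in three dimensions via the ODE comparison for the least Ricci eigenvalue — the key point being that in $\dim=3$ the reaction term in the Ricci evolution closes up in terms of Ricci eigenvalues alone, giving $\Ric_{g(t)}\geqs -K - C(c_0)$ type bounds on the shrinking support of a suitable cutoff, which recovers conclusion 2. For conclusion 1, one combines $\partt d\mu = -R\,d\mu$ with the distance-distortion estimates coming from $|\Ric|\leqs c_0/t$ (integrable on $[0,\tau]$ in the relevant $\sqrt{t}$-sense) and with Bishop--Gromov applied at $t=0$, yielding a volume lower bound $\tilde v_0$ depending only on $v_0$ and $K$.

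The main obstacle is the localised blow-up analysis for conclusion 3: the bad points must be kept a controlled distance from the boundary of $B_{g(t)}(x_0,1)$ so that the rescaled flows extract a \emph{complete} pointed limit. This is exactly what the margin $\si>0$ in the hypothesis buys us — each traversal of the bootstrap loop costs a definite fraction of $\si$ in ball radius, and the continuity argument is set up so that only finitely many such losses occur before one closes the loop with the smaller radius $1$ appearing in the conclusions. A secondary subtlety is to make the ODE comparison for $\Ric$ in step~3 robust to the ball boundary; this is handled by introducing a time-dependent shrinking radius together with a standard cutoff in the maximum principle, absorbing the gradient of the cutoff against the $c_0/t$ curvature bound.
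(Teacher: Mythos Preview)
Your proposal captures the high-level architecture correctly --- the three conclusions feed into each other via results like Lemmas~\ref{Ric_lower_lemma}, \ref{DB_conseq} and \ref{volume_control_lem}, and a continuity/first-failure argument ties them together. Indeed the paper sketches exactly your bootstrap at the end of Section~\ref{ingred_section} as the proof of the \emph{global} version of the theorem. But there is a genuine gap in the localisation, and the paper devotes two separate sections (the prior inclusion lemma of Section~\ref{sharkfinsect} and the pseudolocality improvement lemma of Section~\ref{pseudo_improve_sect}) to closing it.

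The difficulty is that both directions of your loop \emph{lose radius}: to deduce $|\Rm|\leq c_0/t$ on $B_{g(t)}(x_0,1)$ by blow-up, you need the Ricci lower bound on a strictly larger ball (the bad point $z_0$ produced by point-picking can sit right at the boundary of the region where you are working, and the rescaled limit needs room around $z_0$ at \emph{all} earlier times); conversely, to propagate the Ricci lower bound to $B_{g(t)}(x_0,r)$ from $c_0/t$ decay requires that decay on a strictly larger ball. Your claim that ``only finitely many such losses occur'' does not close this: one traversal of the loop costs a fixed fraction of $\si$, and the bootstrap must hold at every time in $[0,\tau]$, so you cannot simply iterate. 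The paper's resolution is \emph{not} to absorb the loss into $\si$ but to invoke Perelman's pseudolocality (Lemma~\ref{pseudo_improve_lemma3}): a Ricci lower bound only on $[0,\al^2 T]$ already yields the $C_0/t$ curvature bound all the way up to $T$. Combined with the prior inclusion lemma~\ref{sharks_fin_lemma}, this means that at the bad point $(z_0,t_0)$ one only needs control at \emph{strictly earlier} times, where $z_0$ is safely in the interior, breaking the circularity. Your proposal does not mention pseudolocality, and without it the argument cannot be completed locally. Two smaller points: the contradiction in your blow-up step is misstated --- the limit is an ancient $\kappa$-solution with \emph{positive asymptotic volume ratio}, ruled out by Perelman \cite[\S 11.4]{P1}, not by splitting --- and the local propagation of the Ricci lower bound (Lemma~\ref{DB}) is substantially more delicate than a single cutoff maximum principle: it requires the double bootstrap of Section~\ref{db_sect}.
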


Here we are using the shorthand $\VolB_g(x_0,r):=\Vol_g(B_g(x_0,r))$ to be the volume with respect to the metric $g$ of the ball centred at $x_0$ and of radius $r>0$ with respect to $g$.
We are also using the shorthand $\Ric_{g}\geq -K$ to mean that
$\Ric_{g}\geq -Kg$ as bilinear forms.

While the local control on the coarse geometry, as given by Conclusions \ref{volume_conseq} and \ref{Ric_conseq_main}, is the central issue in Theorem \ref{mainthm}, the third conclusion also gives a substantial improvement on Perelman's celebrated pseudolocality theorem in this dimension as we now explain.
Similar control to this final conclusion was established in the very interesting recent work of Hochard \cite{hochard} that was carried out independently to ours.
Perelman proved that an $n$-dimensional Ricci flow will enjoy $\al/t$ local curvature decay if it is initially locally almost Euclidean, where the closeness to Euclidean was given principally in terms of an almost-Euclidean isoperimetric inequality and a lower scalar curvature bound holding locally \cite[\S 10.1]{P1}. 
As alluded to by Perelman \cite[\S 10.5]{P1} and proved by Tian and Wang \cite{TW}, pseudolocality also applies for almost-Euclidean in the sense of \eqref{almostEucl}.

\begin{thm}[Perelman, Tian-Wang]
\label{perelman10.5}
Suppose that $(M^n,g(t))$ is a complete Ricci flow for $t\in [0,T)$ with bounded curvature, that $\al>0$, and that $x_0\in M$. Then there exists
$\ep=\ep(n,\al)>0$ such that if
\beq
\label{volume_hyp_mainAE}
\VolB_{g(0)}(x_0,1)\geq (1-\ep)\om_n
\eeq
and 
\beq
\label{Ric_hyp_mainAE}
\Ric_{g(0)}\geq -\ep\qquad\text{on }B_{g(0)}(x_0,1),
\eeq
then
for all $t\in [0,T)\intersect (0,\ep)$ we have
$$|\Rm|_{g(t)}(x_0)\leq \frac{\al}{t}.$$
\end{thm}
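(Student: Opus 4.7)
The strategy is to reduce Theorem \ref{perelman10.5} to Perelman's original pseudolocality theorem \cite[\S 10.1]{P1}, whose hypotheses are (i) a lower bound on scalar curvature on $B_{g(0)}(x_0,1)$ and (ii) an almost-Euclidean isoperimetric inequality on that ball. Hypothesis (i) follows immediately from \eqref{Ric_hyp_mainAE} by taking the trace, giving $\RS_{g(0)} \geq -n\ep$ on $B_{g(0)}(x_0,1)$. The nontrivial task is to extract (ii) from the volume lower bound \eqref{volume_hyp_mainAE} together with \eqref{Ric_hyp_mainAE}.

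To establish (ii), the plan is to invoke almost-rigidity results of Cheeger-Colding type. Under \eqref{volume_hyp_mainAE} and \eqref{Ric_hyp_mainAE}, Bishop-Gromov comparison applied to concentric sub-balls forces each such sub-ball $B_{g(0)}(x_0,r) \subset B_{g(0)}(x_0,1)$ to have volume within $\de(\ep)\om_n r^n$ of the Euclidean value, where $\de(\ep) \downto 0$ as $\ep \downto 0$. Colding's volume continuity theorem \cite[Theorem 0.8]{colding_vol_cgnce} then upgrades this to pointed Gromov-Hausdorff closeness of $B_{g(0)}(x_0,1-\eta)$ to the Euclidean unit ball, for any fixed small $\eta>0$. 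Combining this closeness of all sub-balls with the Cheeger-Colding segment inequality, one obtains an almost-Euclidean isoperimetric inequality on a slightly smaller ball $B_{g(0)}(x_0,1-2\eta)$ of precisely the strength required as input to Perelman's argument. This quantitative extraction is essentially the content of Tian-Wang's work \cite{TW}.

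With (i) and (ii) available on a ball about $x_0$, Perelman's pseudolocality theorem applies directly and yields the curvature decay $|\Rm|_{g(t)}(x_0) \leq \al/t$ on a uniform time interval. Choosing $\ep=\ep(n,\al)$ small enough that the $\de(\ep)$ produced by Cheeger-Colding falls below the threshold required by Perelman completes the argument. The main obstacle is the quantitative passage from the volume and Ricci hypotheses to a sharp isoperimetric constant on a definite ball: one must not merely invoke volume convergence but also rule out long thin regions and deep necks inside $B_{g(0)}(x_0,1)$, since the ambient Ricci flow curvature is controlled only by this isoperimetric input. Handling this uniformly, so that the final $\ep$ depends only on $n$ and $\al$, is exactly what the Tian-Wang paper carries out.
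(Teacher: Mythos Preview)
The paper does not give its own proof of Theorem \ref{perelman10.5}; it is stated as a known result, attributed to Perelman \cite[\S 10.5]{P1} and Tian--Wang \cite{TW}, and is used only as background motivation (the paper's main result, Theorem \ref{mainthm}, is a nonperturbative generalisation in dimension three). So there is no ``paper's proof'' to compare against.

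That said, your outline is an accurate high-level description of the Tian--Wang argument: one reduces the almost-Euclidean Ricci/volume hypotheses to the hypotheses of Perelman's original pseudolocality theorem \cite[\S 10.1]{P1} via Cheeger--Colding almost-rigidity, in particular extracting an almost-Euclidean isoperimetric inequality on a slightly smaller ball from the almost-maximal volume and the Ricci lower bound. Your sketch correctly identifies the nontrivial step (the quantitative passage from volume/Ricci control to isoperimetric control) and correctly defers it to \cite{TW}. As a proof \emph{sketch} this is fine; as a self-contained proof it is not, since the isoperimetric extraction is genuinely delicate and is precisely what \cite{TW} carries out.
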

This almost-Euclidean result can be considered a perturbative result. The model example is the Ricci flow one obtains
starting with  a very shallow cone. For example, when $n=2$, if we flow from a cone with cone angle $2\pi (1-\be)$, then the curvature at the tip decays like 
$\frac{\be}{2(1-\be)t}$ (see e.g. Section 4, Chapter 2 of \cite{ChowKnopf}). We can take the Cartesian product with $\R$ to give a three-dimensional flow. Theorem \ref{perelman10.5} would apply when $\be\geq 0$ was extremely small.
(Strictly speaking, we would apply it from an arbitrarily small time onwards, so as to consider only smooth flows.)

In contrast, the control we obtain in Conclusion \ref{conc3decay} of Theorem \ref{mainthm} is nonperturbative, making the result and its proof quite different. Our result will handle Ricci flows starting at cones with arbitrary sharp cone points, e.g. corresponding to $\be\in [0,1)$ above as close as we like to $1$.
Whether or not Perelman's original pseudolocality result
\cite[Theorem 10.1]{P1} can be extended in the analogous way to assume only \emph{some} isoperimetric inequality rather than an almost-Euclidean one (at the expense of not being able to specify the $\al$ of the $\al/t$ curvature decay, and acquiring a dependence on the isoperimetric constant) is an interesting open question.

Our local lower bound on the Ricci curvature and our pseudolocality decay on the curvature tensor combine to give additional geometric control in terms of the distance function. 
On a smaller ball, the distance function at later times will be equivalent to the distance function initially.
The following proposition will follow rapidly from Theorem \ref{mainthm} and Lemma \ref{DCL3}, as we demonstrate at the end of Section \ref{balls_sect}.

\begin{prop}
\label{dist_equiv_prop}
In the setting of Theorem \ref{mainthm} (even allowing $\si=0$ in \eqref{Ric_hyp_main}) there exist $\ti T=\ti T(v_0,K)>0$ and $\ka=\ka(v_0, K)>0$ such that
for all $x,y\in B_{g(0)}(x_0,1/10)$ and all 
$t\in [0,T)\intersect [0,\tilde T)$ we have
\beq
d_{g(0)}(x,y)-\ka\sqrt{t}\leq 
d_{g(t)}(x,y)\leq (1+\ka t)d_{g(0)}(x,y).
\eeq
\end{prop}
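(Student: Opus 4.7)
The plan is first to eliminate the dependence on $\sigma > 0$, and then to derive each of the two inequalities from a separate conclusion of Theorem \ref{mainthm}, using Lemma \ref{DCL3} as the bridge to distances.

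\textbf{Reduction.} Since $\Ric_{g(0)} \geq -K$ on $B_{g(0)}(x_0,1)$, the Bishop--Gromov inequality converts the hypothesis $\VolB_{g(0)}(x_0,1) \geq v_0$ into a bound of the form $\VolB_{g(0)}(x_0, 1/2) \geq \hat v_0(v_0, K) > 0$. A parabolic rescaling by a factor of $2$ thereby places us in the hypotheses of Theorem \ref{mainthm} with a buffer of size $1$, and pulling the conclusions back yields constants $\tilde T, \tilde K, c_0$ depending only on $v_0$ and $K$ such that for all $t \in [0, T) \cap [0, \tilde T)$,
\[
\Ric_{g(t)} \geq -\tilde K\, g(t) \qquad\text{and}\qquad |\Rm|_{g(t)} \leq c_0/t \qquad \text{on } B_{g(t)}(x_0, 1/2).
\]

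\textbf{Upper bound.} Fix $x, y \in B_{g(0)}(x_0, 1/10)$ and let $\gamma$ be a $g(0)$-minimising geodesic joining them, of length at most $1/5$ and contained in $B_{g(0)}(x_0, 3/10)$. After shrinking $\tilde T$ if necessary, Lemma \ref{DCL3} combined with the lower Ricci bound guarantees that $\gamma \subset B_{g(t)}(x_0, 1/2)$ throughout the interval. Along $\gamma$, the inequality $\partial_t g = -2\Ric \leq 2 \tilde K g$ integrates pointwise to $|\gamma'|_{g(t)} \leq e^{\tilde K t} |\gamma'|_{g(0)}$, and then along the curve to
\[
d_{g(t)}(x,y) \leq L_{g(t)}(\gamma) \leq e^{\tilde K t}\, d_{g(0)}(x,y) \leq (1 + \kappa t)\, d_{g(0)}(x,y),
\]
for an appropriate $\kappa = \kappa(v_0, K)$.

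\textbf{Lower bound and main obstacle.} The curvature decay $|\Rm|_{g(t)} \leq c_0/t$ on $B_{g(t)}(x_0, 1/2)$ feeds directly into Lemma \ref{DCL3}, which I expect to be a Hamilton--Perelman-type distance distortion lemma yielding a bound of the shape $\frac{d^-}{dt} d_{g(t)}(x,y) \geq -C\sqrt{c_0/t}$ whenever $|\Rm|_{g(t)} \leq c_0/t$ in a neighbourhood of a $g(t)$-minimising geodesic between two points near $x_0$. Integrating $1/\sqrt{s}$ from $0$ to $t$ then produces exactly the desired $\kappa\sqrt{t}$ loss. The delicate point throughout is keeping the minimising curves (both at time $0$ and at time $t$) inside the region of validity of the estimates from Theorem \ref{mainthm}; however, the radius $1/10$ assumption on $x$ and $y$ provides a definite buffer inside the radius $1/2$ ball on which those estimates live, and a short simultaneous bootstrap on a maximal subinterval, with $\tilde T$ reduced in terms of $v_0$ and $K$ if necessary, closes the argument.
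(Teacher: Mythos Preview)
Your proposal is correct and follows essentially the same approach as the paper: rescale to create a buffer $\sigma>0$, apply Theorem~\ref{mainthm} to get lower Ricci and $c_0/t$ curvature bounds on $B_{g(t)}(x_0,1/2)$, then feed these into Lemma~\ref{DCL3} with $r=1/10$. The paper is more terse because parts~2 and~4 of Lemma~\ref{DCL3} already deliver exactly the two distance inequalities you need; you instead re-derive the upper bound by hand (integrating $\partial_t g \leq 2\tilde K g$ along a fixed $g(0)$-geodesic) and correctly anticipate that the lower bound comes from the Hamilton--Perelman distance distortion mechanism behind part~4. Your identification of the ``delicate point''---keeping minimising geodesics inside the region of curvature control---is precisely what the displaced-balls content of Lemma~\ref{DCL3} handles.
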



\brmk
To see that Theorem \ref{mainthm} fails in dimension $4$ (and higher),
one can consider blow-downs of the Eguchi-Hanson metric. In this way, we can construct a sequence of stationary (Ricci flat) solutions to the Ricci flow satisfying the hypotheses of our theorem (with $K=0$ and uniform $v_0>0$) that exist for all time, with larger and larger curvature (that is constant in time).
\ermk

\brmk
To see that other aspects of our main theorem are sharp, key examples include flows from cones as above of various cone angles, both above and below $2\pi$, and flows that develop singularities, including shrinking cylinders and neckpinches.
\ermk

\cmt{Applying the main result \emph{just} before a singularity to
the shrinking cylinder and the neck-pinch,
we see that 
$\hat T$ also depends on $v_0$ and $K$ resp.
}

As we will prove in Section \ref{mainthm_proof_sect}, Theorem \ref{mainthm} is equivalent to the following result with the Conclusions \ref{volume_conseq}, \ref{Ric_conseq_main}
and \ref{conc3decay} replaced by the analogous conclusions in which we define balls with respect to the time $0$ metric.

\begin{thm}
\label{mainthm_time0} 
In the setting of Theorem \ref{mainthm}, there exist constants
$\tilde T$, $\tilde v_0$, $\tilde K$ and $c_0$ as in Theorem 
\ref{mainthm} such that for all $t\in [0,T)\intersect (0,\tilde T)$ we have
\begin{compactenum}[\quad $1'$.]
\item
\label{volume_conseq_prime}
$\Vol_{g(t)}(B_{g(0)}(x_0,1))\geq \tilde v_0>0$,
\item
\label{Ric_conseq_main_prime}
$\Ric_{g(t)}\geq -\ti K<0\qquad\text{on }B_{g(0)}(x_0,1)$,
\item
\label{conc3decay_prime}
$|\Rm|_{g(t)}\leq \frac{c_0}{t}\qquad\text{on }B_{g(0)}(x_0,1)$.
\end{compactenum}
\end{thm}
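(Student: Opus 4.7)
My plan is to establish the equivalence of the two theorems via Ricci-flow distance distortion estimates that let me pass between $g(0)$-balls and $g(t)$-balls, applied on slightly shifted radii so that the conclusions transfer cleanly.

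For the implication Theorem \ref{mainthm_time0} $\Rightarrow$ Theorem \ref{mainthm}, I first apply the primed theorem after parabolic rescaling so that the reference unit ball becomes $B_{g(0)}(x_0, 1+\sigma/2)$ rather than $B_{g(0)}(x_0,1)$; this is legitimate because the initial Ricci lower bound is available on all of $B_{g(0)}(x_0, 1+\sigma)$ and the volume hypothesis upgrades to one on the enlarged ball by inclusion, so the resulting constants depend on $v_0,K,\sigma$. In particular one obtains $|\Rm|_{g(t)} \leq c_0/t$ throughout $B_{g(0)}(x_0,1+\sigma/2)$, and a standard shrinking-balls-type estimate then gives
\beq
d_{g(t)}(y,x_0) \geq d_{g(0)}(y,x_0) - \beta\sqrt{c_0 t}
\eeq
for $y$ in that ball and $t$ small, whence $B_{g(t)}(x_0,1) \subset B_{g(0)}(x_0, 1+\beta\sqrt{c_0 t}) \subset B_{g(0)}(x_0, 1+\sigma/2)$ once $\tilde T$ is small enough. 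This immediately transfers Conclusions \ref{Ric_conseq_main_prime} and \ref{conc3decay_prime} into Conclusions \ref{Ric_conseq_main} and \ref{conc3decay}. For the volume Conclusion \ref{volume_conseq}, the lower Ricci bound of Conclusion \ref{Ric_conseq_main_prime} gives $d_{g(t)} \leq e^{\tilde K t} d_{g(0)}$ on $B_{g(0)}(x_0,1)$, so $B_{g(0)}(x_0, e^{-\tilde K t}) \subset B_{g(t)}(x_0,1)$; combining with the Ricci lower bound now established on $B_{g(t)}(x_0,1)$, Bishop--Gromov at time $t$ converts the $g(t)$-volume lower bound on the $g(0)$-ball (Conclusion \ref{volume_conseq_prime}) into one on $B_{g(t)}(x_0,1)$.

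The reverse direction Theorem \ref{mainthm} $\Rightarrow$ Theorem \ref{mainthm_time0} follows the same template: the initial Ricci lower bound yields $d_{g(t)}\leq e^{Kt}d_{g(0)}$ and hence $B_{g(0)}(x_0,1) \subset B_{g(t)}(x_0,1+\epsilon)$ for arbitrarily small $\epsilon$ once $t$ is small, and applying Theorem \ref{mainthm} after parabolic rescaling (so that $B_{g(t)}(x_0,1+\epsilon)$ plays the role of the reference unit $g(t)$-ball) produces conclusions on the enlarged time-$t$ ball which transfer back to the primed conclusions on $B_{g(0)}(x_0,1)$. The main obstacle lies in the volume transfer, since a lower bound on the $g(t)$-volume of a $g(0)$-ball does not immediately pass to one on the $g(t)$-volume of a comparable $g(t)$-ball; this is resolved by first deriving the pointwise Ricci lower bound on the target ball and then invoking Bishop--Gromov at time $t$ to bridge the two notions of ball.
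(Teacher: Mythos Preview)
Your overall strategy matches the paper's: both directions hinge on parabolically rescaling to apply the known theorem on a slightly larger reference ball, then using the expanding/shrinking balls lemmas to nest $g(0)$-balls and $g(t)$-balls inside one another. However, two of your steps do not work as written.

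In the direction Theorem~\ref{mainthm} $\Rightarrow$ Theorem~\ref{mainthm_time0}, you write ``the initial Ricci lower bound yields $d_{g(t)}\leq e^{Kt}d_{g(0)}$''. This is false: the estimate $d_{g(t)}\leq e^{Kt}d_{g(0)}$ (equivalently, the expanding balls lemma~\ref{balls_lemma2}) requires $\Ric_{g(s)}\geq -K$ on suitable $g(s)$-balls for \emph{every} $s\in[0,t]$, not merely at $s=0$. The all-times Ricci lower bound is precisely Conclusion~\ref{Ric_conseq_main}, which is an output of Theorem~\ref{mainthm}, not an input. The paper therefore reverses your order: it first applies Theorem~\ref{mainthm} at the rescaled radius to obtain $\Ric_{g(t)}\geq -\ti K$ on $B_{g(t)}(x_0,1+\si/2)$ for all $t$, and only then invokes Lemma~\ref{balls_lemma2} to conclude $B_{g(0)}(x_0,1)\subset B_{g(t)}(x_0,1+\si/2)$.

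In the direction Theorem~\ref{mainthm_time0} $\Rightarrow$ Theorem~\ref{mainthm}, your handling of Conclusions~\ref{Ric_conseq_main} and~\ref{conc3decay} is fine and agrees with the paper, but the volume step is gappy. You have $\Vol_{g(t)}(B_{g(0)}(x_0,1+\si/2))\geq\tilde v_0$ from the rescaled Conclusion~\ref{volume_conseq_prime}$'$, together with $B_{g(0)}(x_0,e^{-\tilde K t})\subset B_{g(t)}(x_0,1)$. These do not combine to bound $\VolB_{g(t)}(x_0,1)$ from below: the set whose $g(t)$-volume you control is a $g(0)$-ball, so Bishop--Gromov at time $t$ does not apply to it, and the inclusion you have goes the wrong way for a direct volume comparison (the $g(0)$-ball on which you have the lower bound is the \emph{larger} one). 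The paper avoids this by a second rescaling: it applies Conclusion~\ref{volume_conseq_prime}$'$ to the \emph{smaller} ball $B_{g(0)}(x_0,1/2)$, obtaining $\Vol_{g(t)}(B_{g(0)}(x_0,1/2))\geq\tilde v_0$, and then uses $B_{g(0)}(x_0,1/2)\subset B_{g(t)}(x_0,1)$ (from the expanding balls lemma, now legitimate since Conclusion~\ref{Ric_conseq_main} has already been established) to conclude.
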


Theorems \ref{mainthm} and \ref{mainthm_time0} are related to two previous papers of the first author. In \cite{MilesCrelle3D}, a \emph{global} result controlling Ricci curvature from below for Ricci flows on three-dimensional manifolds was established. In \cite{MilesJGA}, a local result controlling sectional curvature from below, also in three dimensions, was given.
Also, Hochard \cite{hochard} proved lower Ricci curvature bounds of the form 
$\Ric_{g(t)} \geq -\frac{1}{t}$ that degenerate as $t\downto 0$, and volume lower bounds on balls of radius $\sqrt{t}$.

\cmt{Hochard's control won't give Proposition \ref{dist_equiv_prop}, right?}

\brmk
The boundedness of the curvature in Theorem \ref{mainthm} is required only because at some point we apply one of Perelman's pseudolocality theorems, and it is currently necessary to assume such boundedness to make the proof of pseudolocality complete.
\ermk

\brmk
With an additional argument, e.g. extending theory of Cheeger-Colding-Naber, we expect that it is possible 
to set $\tilde v_0=v_0/2$ in Theorems \ref{mainthm} and \ref{mainthm_time0}.
\ermk

\cmt{need to evaluate the remark below}

\brmk
Our work has applications to the problem of starting the Ricci flow with manifolds that 
may have unbounded curvature, but have coarse control on the geometry near each point $x_0$, given by hypotheses \eqref{volume_hyp_main} and \eqref{Ric_hyp_main} of our main theorem.
Indeed, if we combine our main theorem \ref{mainthm} and Proposition \ref{dist_equiv_prop} with the work of Hochard \cite{hochard}, then we find not only that it is possible to start the Ricci flow from such a manifold (as shown by Hochard), but also that the Ricci curvature of the resulting flow will be bounded from below, in view of Theorem \ref{mainthm}, and that we have quantitative control on the convergence as $t\downto 0$ in the Gromov-Hausdorff sense, in view of Proposition \ref{dist_equiv_prop}. In particular, this allows one to start the Ricci flow with arbitrary Gromov-Hausdorff limits of sequences of complete Riemannian manifolds satisfying hypotheses \eqref{volume_hyp_main} and \eqref{Ric_hyp_main} of our main theorem for each $x_0\in M$, \emph{and} be sure that the constructed Ricci flow satisfies estimates of the type given in the conclusions of Theorem \ref{mainthm} and Proposition \ref{dist_equiv_prop} for $t>0$, and achieves its initial data in the Gromov-Hausdorff sense.
In a subsequent paper \cite{ST2} we develop these ideas further in order to prove a stronger result than the three-dimensional Anderson-Cheeger-Colding-Tian conjecture, describing the structure of Ricci limit spaces for which we only assume a noncollapsedness condition like \eqref{volume_hyp_main} at one point in each approximating manifold.
\ermk
Implicitly, all Ricci flows in this paper are smooth, and all manifolds are connected and without boundary.

\section{Ingredients in the proof of the main theorem \ref{mainthm}}
\label{ingred_section}


In this section we give three of the main supporting results that will ultimately be combined to give the proof of Theorem \ref{mainthm}. Each of the supporting results can be viewed as weaker versions of the main theorem, in which we take one or two of the conclusions as additional hypotheses, and deduce the remaining conclusions.
We will also give an indication of some of the localisation issues that will arise in the proof by combining the three local supporting results to prove an easier global version of Theorem \ref{mainthm} more along the lines of what was proved in \cite{MilesCrelle3D}.

\bcmt{check sentence above is still valid}

\cmt{references to earlier work moved forward to first section}

We first obtain $C_0/t$ decay on the sectional curvatures as in Theorem \ref{mainthm}, but with the stronger hypothesis \eqref{Ric_hyp1} that the Ricci curvature is bounded below not just at $t=0$ but  for all times. Despite the closeness of these results, Lemma \ref{Ric_lower_lemma} will just be one ingredient in the proof of Theorem \ref{mainthm}, and will be applied at some different scale as part of a contradiction argument.
A subtle distinction is that the stronger hypothesis allows us to drop the hypothesis that the Ricci flow is part of a larger complete Ricci flow with bounded curvature.

\begin{lemma}[\bf $\Ric\geq -K$ lower bound implies $|\Rm|\leq C_0/t$ upper bound]
\label{Ric_lower_lemma}
Suppose that $(M^3,g(t))$ is a  Ricci flow for $t\in [0,T)$ such that
$B_{g(t)}(x_0,1)\subset\subset M$ for each $t\in [0,T)$ and some $x_0\in M$. 
Suppose further that 
\beq
\label{volume_hyp1}
\VolB_{g(0)}(x_0,1)\geq v_0>0
\eeq
and 
\beq
\label{Ric_hyp1}
\Ric_{g(t)}\geq -K<0\qquad\text{on }B_{g(t)}(x_0,1)\text{ for all }
t\in [0,T),
\eeq
and that $\ga\in (0,1)$ is any constant.

Then there exists $\hat T=\hat T(v_0, K, \ga)>0$, $C_0=C_0(v_0,K,\ga)<\infty$ 
and $\eta_0=\eta_0(v_0, K, \ga)>0$
such that for all $t\in (0,T)\intersect (0,\hat T)$ we have
$$|\Rm|_{g(t)}< \frac{C_0}{t}\qquad\text{on }B_{g(t)}(x_0,\ga),$$
and
\beq
\label{second_conclusion}
\VolB_{g(t)}(x_0,1) \geq \eta_0.
\eeq
\end{lemma}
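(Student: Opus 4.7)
The plan is to establish the two conclusions of Lemma \ref{Ric_lower_lemma} separately: the volume bound \eqref{second_conclusion} will follow by standard Ricci-flow distance-distortion and volume-evolution arguments from the Ricci lower bound hypothesis, while the $C_0/t$ curvature decay will be proved by contradiction via a blow-up limit that we rule out as a three-dimensional $\kappa$-solution with positive asymptotic volume ratio.

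For \eqref{second_conclusion}, the hypothesis $\Ric_{g(t)}\geq -K$ on $B_{g(t)}(x_0,1)$ at all times $t$ controls via the standard distance variation formula for Ricci flow how rapidly $B_{g(t)}(x_0,1)$ can move relative to $B_{g(0)}(x_0,1)$, so that a slightly shrunk initial ball $B_{g(0)}(x_0,r)$ with $r\uparrow 1$ as $t\downarrow 0$ sits inside $B_{g(t)}(x_0,1)$ for small $t$. Bishop--Gromov on the initial slice, using the initial Ricci lower bound, gives $\VolB_{g(0)}(x_0,r)\geq c v_0$; and a careful use of the volume evolution $\partial_t\, d\mu_{g(t)} = -R\, d\mu_{g(t)}$ with $R\geq -3K$ in three dimensions then yields $\VolB_{g(t)}(x_0,1)\geq \eta_0(v_0,K)>0$ for $t\leq \hat T$.

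For the curvature decay I would argue by contradiction in the spirit of \cite{MilesCrelle3D}. Negating the conclusion produces a sequence of Ricci flows $(M_i^3,g_i(t),x_{0,i})$ satisfying the hypotheses with the same $(v_0,K,\ga)$ and points $y_i\in B_{g_i(t_i)}(x_{0,i},\ga)$ such that $t_i|\Rm|_{g_i(t_i)}(y_i)\to\infty$. A Perelman-style point-picking locates nearby $(\tilde y_i,\tilde t_i)$ with $Q_i:=|\Rm|_{g_i(\tilde t_i)}(\tilde y_i)\to\infty$ and $|\Rm|\leq 2Q_i$ on a parabolic neighbourhood of radius $A_i/\sqrt{Q_i}$, $A_i\to\infty$. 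Rescaling space by $\sqrt{Q_i}$ and time by $Q_i$ produces flows $\tilde g_i$ with $|\Rm|$ uniformly bounded on arbitrarily large parabolic cylinders around $\tilde y_i$, with $\Ric_{\tilde g_i}\geq -K/Q_i\to 0$ on the relevant region, and with volume noncollapsing at $\tilde y_i$ at every scale: indeed, the already-established \eqref{second_conclusion} together with Bishop--Gromov on the nearly non-negatively curved ball transfers the volume bound from $x_{0,i}$ (which lies at $g_i(\tilde t_i)$-distance less than $1$ from $\tilde y_i$) to $\tilde y_i$, and after rescaling gives a positive lower bound on $\VolB_{\tilde g_i(0)}(\tilde y_i,R)/R^3$ valid for all $R$ up to a scale of order $\sqrt{Q_i}\to\infty$.

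The hard part is closing the contradiction on the blow-up. Hamilton's compactness theorem yields a smooth complete pointed ancient Ricci flow $(M_\infty^3,g_\infty(t),y_\infty)$ on $t\in(-\infty,0]$ with bounded non-negative Ricci curvature—hence non-negative sectional curvature in three dimensions by Hamilton--Ivey—and with $|\Rm|_{g_\infty(0)}(y_\infty)=1$, so not flat. The scale-invariant volume lower bound passes to the limit as a strictly positive lower bound on $\VolB_{g_\infty(0)}(y_\infty,R)/R^3$ for every $R>0$; equivalently, $g_\infty(0)$ has strictly positive asymptotic volume ratio. But the limit is then a three-dimensional $\kappa$-solution in Perelman's sense—complete, ancient, $\kappa$-noncollapsed at all scales, with bounded non-negative curvature—and Perelman's classification forces such solutions to have vanishing asymptotic volume ratio. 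This contradiction establishes the existence of $C_0=C_0(v_0,K,\ga)$, completing the proof.
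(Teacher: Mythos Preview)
Your blow-up strategy for the curvature decay is essentially the same as the paper's: point-picking, rescaling, Hamilton compactness to an ancient 3-dimensional flow, Hamilton--Ivey to get nonnegative sectional curvature, and then Perelman's result that $\kappa$-solutions have zero asymptotic volume ratio. That part is fine in outline.

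The genuine gap is in your derivation of the volume lower bound \eqref{second_conclusion}. You write that ``$\partial_t\, d\mu_{g(t)} = -R\, d\mu_{g(t)}$ with $R\geq -3K$'' yields the lower bound, but this inequality points the wrong way: $R\geq -3K$ gives $-R\leq 3K$, hence only an \emph{upper} bound on volume growth. To bound volume from \emph{below} via this formula you would need an upper bound on $R$, which the hypotheses do not provide. With only $\Ric\geq -K$, the scalar curvature can be arbitrarily large and positive on $B_{g(t)}(x_0,1)$, and the volume of any fixed region can collapse arbitrarily fast. This is exactly the difficulty the paper flags in Section~\ref{vol_control_sect}: the natural $c_0/t$ upper bound ``just fails to be integrable in time, thus permitting in theory an arbitrarily quick loss of all volume,'' and Lemma~\ref{volume_control_lem} is a nontrivial result precisely because of this.

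This gap then propagates: your blow-up argument uses the ``already-established'' volume bound at time $\tilde t_i$ to obtain noncollapsing at $\tilde y_i$, so the argument becomes circular. The paper breaks this circularity by a bootstrap: Lemma~\ref{volume_control_lem} shows that the volume bound holds \emph{while} the $C_0/t$ curvature bound holds, so one may assume the extra hypothesis \eqref{extra_volume_hyp} when running the contradiction argument for the curvature decay; once the curvature decay is proved, the volume bound follows a posteriori. You need either this bootstrap or some other genuine argument for the lower volume control.
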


\cmt{Need vol bd to apply pseudoloc, but this conclusion also makes the lemma compare with the main theorem}

\cmt{I changed the $[0,\hat T]$ from our notes to $[0,\hat T)$. Clearly the latter implies the former - unless we want $T=\hat T$, and that is convenient to allow. In fact, might be more convenient to allow all time intervals closed, which is what we use in applications.}

We prove Lemma \ref{Ric_lower_lemma} in Section \ref{Ric_lower_lemma_proof_sect}. It will involve a Perelman-style point picking procedure (for which we give a clean exposition in Lemma \ref{decay_or_no_decay}) allowing us to blow up a contradicting sequence in order to get a $\ka$-solution with positive asymptotic volume ratio, which is impossible by a result of Perelman \cite[\S 10.4]{P1}. Related arguments have been used by several authors since Perelman's work.

If one can see Lemma \ref{Ric_lower_lemma} as a version of Theorem \ref{mainthm} in which we assume Conclusion \ref{Ric_conseq_main} as an additional hypothesis, then the following lemma can be considered a version in which we assume the curvature decay of Conclusion \ref{conc3decay} as a hypothesis in order to obtain the persistence of lower Ricci bounds of Conclusion \ref{Ric_conseq_main}.

\begin{lemma}[Lower Ricci bounds]
\label{DB_conseq}
Let $c_0\geq 1$, $\ti K >0$ be arbitrary. 
Suppose that $(M^3,g(t))$ is a Ricci flow for $t\in [0,T)$, and $x_0\in M$
satisfies $B_{g(t)}(x_0,2)\subset\subset M$ for all $t\in [0,T)$.
We assume further  that
\begin{itemize}
\item[(a)] $\Ric_{g(0)}\geq -\ti K \qquad\text{on }B_{g(0)}(x_0,2)$
\item[(b)] $|\Rm|_{g(t)}\leq \frac{c_0}{t}\qquad\text{on }B_{g(t)}(x_0,2)$
for all $t\in (0,T)$.
\end{itemize}
Then there exists a $\hat T = \hat T(c_0,\ti K)>0$ 
such that 
$$\Ric_{g(t)}\geq -100\ti Kc_0\qquad\text{on }B_{g(t)}(x_0,1)$$
for all $t\in [0,T)\intersect [0,\hat T)$.
\end{lemma}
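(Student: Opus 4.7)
The plan is to apply Hamilton's tensor maximum principle to $B_{ij}(t) := R_{ij} + F(t) g_{ij}$ on $B_{g(t)}(x_0, 1)$, for a positive function $F(t)$ to be chosen so that $B(t) \geq 0$ on this ball, which then yields $\Ric \geq -F(t)g \geq -100\tilde K c_0\, g$ on $[0,\hat T)$.

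First I will localize the argument. The hypothesis $|\Rm|_{g(t)} \leq c_0/t$ makes $\sqrt{|\Rm|}$ integrable in $t$ near $0$, so Hamilton's distance-distortion estimate yields $d_{g(t)}(x,y) \leq d_{g(0)}(x,y) + C\sqrt{c_0 t}$, and by choosing $\hat T = \hat T(c_0,\tilde K)$ small I can guarantee $B_{g(t)}(x_0,1) \subset B_{g(0)}(x_0, 2)$ and $B_{g(t)}(x_0,1+\si) \subset B_{g(t)}(x_0,2)$ for a useful buffer $\si$; in particular hypothesis (a) applies where I want the conclusion. Next I will derive the evolution equation for $B$. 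The defining feature of dimension $3$ is that the Riemann tensor is algebraically determined by $\Ric$ and $R$ via the identity
$$R_{ijkl} = R_{ik}g_{jl} - R_{il}g_{jk} + R_{jl}g_{ik} - R_{jk}g_{il} - \tfrac{R}{2}(g_{ik}g_{jl} - g_{il}g_{jk}),$$
so the curvature reaction term $R_{ipjq}R^{pq}$ in the usual Ricci evolution collapses to a polynomial in $\Ric$ and $R$, giving
$$\partial_t R_{ij} = \Delta R_{ij} + 3R\, R_{ij} - 6 R_{ip}R^p{}_j + (2|\Ric|^2 - R^2) g_{ij}.$$

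Third, I will verify the null-eigenvector condition for $B$. At a point and unit vector $v$ where $B(v,v)=0$, passing to an orthonormal eigenframe in which $B$ is diagonal with eigenvalues $(0, m_2, m_3)$ (so the Ricci eigenvalues are $(-F, m_2-F, m_3-F)$), a direct computation using $\partial_t g = -2\Ric$ and the evolution equation above shows that the reaction term in the evolution of $B(v,v) - \Delta B(v,v)$ equals
$$(m_2 - m_3)^2 + 2F^2 - F(m_2+m_3) + F'(t).$$
The 3D identity $|\Rm|^2 = 4|\Ric|^2 - R^2 = 2(m_2-m_3)^2 + (m_2+m_3 - F)^2 + 2F^2$ combined with hypothesis (b) gives $|m_2 + m_3 - F| \leq c_0/t$, so the null-eigenvector condition is guaranteed by the ODE inequality
$$F'(t) \geq \frac{c_0 F(t)}{t} - F(t)^2.$$
A direct candidate is the barrier $F(t) = \tilde K + (c_0+1)/t$, which satisfies this inequality (the leading $1/t^2$ terms cancel exactly, leaving the nonnegative residue $\tilde K^2 + \tilde K(c_0+2)/t$) and has $F(0^+) = \infty$, making the initial tensor inequality $B(0) \geq 0$ trivial. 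The tensor maximum principle then delivers $\Ric_{g(t)} \geq -F(t) g_{(t)}$ throughout $B_{g(t)}(x_0,1)$.

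The main obstacle is that this barrier gives only $\Ric \geq -(\tilde K + (c_0+1)/t)g$, which is weaker than $-100\tilde K c_0$ on a short initial interval where $(c_0+1)/t > 99\tilde K c_0$. To upgrade to the claimed uniform constant bound on the whole of $[0,\hat T)$, I expect to need to replace the simple $1/t$ barrier either by a refined envelope that interpolates between $F(0) = \tilde K$ and the $(c_0+1)/t$ regime (using the quadratic $-F^2$ term, which the naive ODE analysis throws away, to absorb the transient), or to combine the tensor maximum principle on $[\tau, \hat T)$ for a small $\tau = \tau(c_0,\tilde K)$ with a complementary short-time estimate on $[0,\tau)$ obtained by directly integrating the evolution equation for $\Ric$, using hypothesis (a) together with the smoothness of the flow at $t=0$ to show that $\Ric(t)$ cannot drift more than $(99\tilde K c_0 - \tilde K)g$ below its initial value on that interval.
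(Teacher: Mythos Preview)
Your plan has two substantial gaps, and the paper's proof is structured precisely to overcome them.

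\textbf{Localisation.} You cannot apply a tensor maximum principle on the open region $B_{g(t)}(x_0,1)$ without controlling what happens at the boundary. Knowing that $B_{g(t)}(x_0,1)\subset B_{g(0)}(x_0,2)$ only tells you where the hypotheses hold; it does not prevent the first zero of $B_{ij}$ from occurring on $\partial B_{g(t)}(x_0,1)$, where you have no control. The paper handles this by multiplying $\Ric$ against a carefully constructed Perelman-type cut-off $h$ that satisfies $(\partial_t-\Delta)h\le 0$ and $|\nabla h|^2\le V h^{3/2}$ in a barrier sense, so that the first failure point is forced into the interior. This produces cross terms $\nabla h\cdot\nabla\Ric$ that must be estimated via Shi's derivative bounds; none of this is visible in your outline.

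\textbf{The ODE barrier cannot stay bounded.} Your null-eigenvector inequality reduces to $F'\ge cF/t - F^2$ for some $c$ comparable to $c_0$. But no function with $F(0^+)$ finite can satisfy this near $t=0$: if $F$ stays near $\tilde K$ on $(0,\tau)$, then $F'\gtrsim c\tilde K/t$, which is not integrable, forcing $F(t)\to\infty$. The quadratic term $-F^2$ is of bounded size near $t=0$ and cannot compete with $cF/t$. So the only barriers available to your method are those, like your $\tilde K+(c_0+1)/t$, that blow up at $t=0$; you will never recover the uniform bound $-100\tilde K c_0$ this way. Your ``short-time smoothness'' fix also fails: the flow is smooth at $t=0$, but $\sup|\Rm|_{g(0)}$ is not controlled by $c_0$ and $\tilde K$, so the $\tau$ you would obtain depends on the particular flow, not just on $(c_0,\tilde K)$, contradicting the statement.

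The paper's resolution is a genuine \emph{double bootstrap}. One first proves a local lower scalar bound $\Sc\ge -6K$ (this uses the good sign in $\partial_t\Sc=\Delta\Sc+2|\Ric|^2$ and survives the $c_0/t$ blow-up). Then one considers the localised tensor $h\,\Ric+(LK\Sc\, t^\alpha+t^\epsilon+7K)g$; the extra $LK\Sc\, t^\alpha$ term contributes a large positive $2LKt^\alpha|\Ric|^2$ to the reaction, which absorbs the bad $\lambda(\mu+\nu)$ piece. With $\alpha=1-2\epsilon$ this yields only $\Ric\ge -t^{-3\epsilon}$, but crucially this is integrable and far better than $c_0/t$. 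Feeding this improved a priori bound back into the same argument with $\alpha=1$ then gives the uniform bound. The point is that no single ODE comparison suffices; one must first upgrade the crude $|\Ric|\le c_0/t$ information to something subcritical before the final step can close.
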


This lemma and its proof is considerably more unorthodox. We do use the remarkable properties of the distance function under Ricci flow in order to construct  useful cut-off functions, in the spirit of Perelman, following Hamilton. Indeed we give a self-contained, re-usable exposition of the theory we need in 
Section \ref{cutoff_sect} -- see Lemma \ref{cutoff}. 
Moreover, the local Ricci control of Lemma \ref{DB_conseq} can be considered a relative of the local scalar curvature control developed by B.-L. Chen \cite{strong_uniqueness}. Indeed, we will use similar control as an ingredient in the proof, and we give the theory we need in Section \ref{scalar_sect} -- see Lemma \ref{DBscalar}.
However, the proof of Lemma \ref{DB_conseq} is very different in that it requires a \emph{double bootstrap} argument. If one interprets the lemma as obtaining $L^\infty$ control on the negative part of the Ricci curvature, then our argument proceeds in two steps, the first effectively achieving only $L^p$ control in time, for finite $p$.
Lemma \ref{DB_conseq} is just a rescaling of Lemma \ref{DB}, which we prove in Section \ref{db_sect}.

A third result that we highlight at this point can be considered  the statement that if we assume both Conclusions \ref{Ric_conseq_main}
and \ref{conc3decay} in addition, then we can deduce Conclusion \ref{volume_conseq}. This result even works in higher dimension.

\cmt{One for the future: what is the non-Ricci flow version of this result}

\cmt{in the middle of changing $S$ to $T$, and $T$ to $\hat T$.}

\newcommand{\thatwasT}{\ensuremath{{\hat T}}}
\newcommand{\twasS}{\ensuremath{{T}}}

\begin{lemma}[Lower volume control]
\label{volume_control_lem}
Suppose that $(M^n ,g(t))$ is  a  Ricci flow for ${t\in [0,\twasS )}$,  
such that  $B_{g(t)}(x_0,\ga)\subset\subset M$ 
for some $x_0\in M$ and $\ga>0$,  and all $t \in [0,\twasS )$.
Assume further that
\begin{enumerate}[(i)]
\item
\label{ric_bdi}
$\Ric_{g(t)} \geq -K$ on $B_{g(t)}(x_0,\ga)$, for some $K>0$ and all $t \in [0,\twasS )$,
\item
\label{curv_bdii}
$|\Rm|_{g(t)} \leq \frac{c_0}{t} $    on
$ B_{g(t)}(x_0,\ga)$, for some $c_0<\infty$ and all $t \in
  (0,\twasS )$,
\item
\label{vol_bdiii}
$\VolB_{g(0)}(x_0,\ga) \geq v_0$ for some $v_0>0$.
\end{enumerate}
Then there exists $\ep_0 = \ep_0(v_0,K,\ga,n) >0$ and $\thatwasT  =
\thatwasT (v_0,c_0,K,\ga,n)>0 $ such that 
$$\VolB_{g(t)}(x_0,\ga)  > \ep_0$$ 
for all
$t \in [0,\thatwasT ]\cap[0,\twasS )$.
\end{lemma}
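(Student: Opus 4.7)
The plan is to reduce the statement to a lower bound on the $g(t)$-volume of a \emph{fixed} initial-time ball $\Omega$, obtain this bound at $t=0$ via Bishop--Gromov, and then propagate it forward by a parabolic blow-up and contradiction argument exploiting Perelman's no-local-collapsing.

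First I would use the lower Ricci hypothesis (\ref{ric_bdi}) at every time: since $\partial_t g = -2\Ric \leq 2Kg$ on $B_{g(t)}(x_0,\ga)$, one has $g(t) \leq e^{2Kt}g(0)$ and therefore $d_{g(t)} \leq e^{Kt}d_{g(0)}$, which gives $B_{g(0)}(x_0,\ga/2) \subseteq B_{g(t)}(x_0,\ga)$ for $t \leq (\log 2)/K$. Bishop--Gromov applied to $(M,g(0))$, using (\ref{ric_bdi}) at $t=0$ together with (\ref{vol_bdiii}), then gives
$$\Vol_{g(0)}(B_{g(0)}(x_0,\ga/2)) \;\geq\; \frac{V_K(\ga/2)}{V_K(\ga)}\,v_0 \;=:\; 2\ep_0,$$
where $V_K(r)$ denotes the volume of an $r$-ball in the model space of constant sectional curvature $-K/(n-1)$. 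Setting $\Omega := B_{g(0)}(x_0,\ga/2)$, the lemma is reduced to showing $\Vol_{g(t)}(\Omega) \geq \ep_0$ for all $t \in [0,\hat T]\cap [0,T)$.

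The naive attempt to compute $\partial_t \Vol_{g(t)}(\Omega) = -\int_\Omega R\,d\mu_{g(t)}$ and use the scalar bound $|R|\leq Cc_0/t$ from (\ref{curv_bdii}) only yields $\partial_t\log\Vol_{g(t)}(\Omega)\geq -Cc_0/t$, whose integral over $[0,t]$ diverges, so direct pointwise propagation fails. Instead I would argue by contradiction: if the conclusion were false (for any choice of $\hat T$ and with $\ep_0$ as above), one extracts a sequence of Ricci flows $(M_k, g_k, x_0^k)$ satisfying (\ref{ric_bdi})--(\ref{vol_bdiii}) with common $v_0,K,c_0,\ga$, together with times $t_k \in (0, 1/k]$ at which $\Vol_{g_k(t_k)}(B_{g_k(t_k)}(x_0^k,\ga))$ drops below $\ep_0/k$. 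Parabolically rescale by $t_k$, setting $\tilde g_k(t) := t_k^{-1}g_k(t_k t)$. On the rescaled flows one checks that $|\widetilde{\Rm}|\leq c_0/t$ and $\widetilde{\Ric}\geq -K t_k \to 0$, and Bishop--Gromov applied at scale $\sqrt{t_k}$ in $g_k(0)$ shows that the rescaled ball $B_{\tilde g_k(0)}(x_0^k,1)$ has volume uniformly bounded below in terms of $v_0, K, \ga, n$ alone. Because the rescaled curvature is then uniformly bounded on the interval $[\tfrac12, 1]$, Hamilton's compactness extracts a smooth pointed Ricci flow limit on that interval with nonnegative Ricci, and Perelman's no-local-collapsing --- applied using the uniform rescaled initial noncollapsedness --- produces a positive lower bound on $\Vol_{\tilde g_k(1)}(B_{\tilde g_k(1)}(x_0^k, 1))$, contradicting the rescaled form of the failure assumption.

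The hard part will be this compactness step. One must verify that the purely \emph{local} data (\ref{ric_bdi})--(\ref{vol_bdiii}) are strong enough to invoke a suitable version of Perelman's no-local-collapsing with constants depending only on $v_0, K, \ga, n$; ensure that the rescaled initial volume bound and lower Ricci bound survive the passage to the Hamilton limit; and organise the bookkeeping of the failure assumption carefully enough to yield a clean contradiction and a resulting $\hat T$ that depends only on $v_0, c_0, K, \ga$ and $n$, as required by the statement.
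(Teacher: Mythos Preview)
Your reduction step is fine and matches the paper: the expanding-balls inclusion $B_{g(0)}(x_0,\ga/2)\subset B_{g(t)}(x_0,\ga)$ together with Bishop--Gromov at $t=0$ is exactly how the paper gets started, and you correctly identify that the naive $\partial_t\log\Vol\geq -Cc_0/t$ computation just barely fails to integrate.

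The gap is in the contradiction step. Perelman's no-local-collapsing is a \emph{global} theorem: it relies on the monotonicity of the $\mathcal{W}$-functional (or of reduced volume along $\mathcal{L}$-geodesics), and both of these require completeness of the flow, or at minimum control far beyond the single ball $B_{g(t)}(x_0,\ga)$ that the hypotheses give you. The lemma is stated for a Ricci flow that need not be complete and on which nothing is assumed outside $B_{g(t)}(x_0,\ga)$; after your parabolic rescaling the region of control does grow to infinity, but you still cannot invoke $\mathcal{W}$-monotonicity or reduced volume because there is no global flow on which they are defined. Your final paragraph flags exactly this (``a suitable version of Perelman's no-local-collapsing''), but no such local version is available, and supplying one would be harder than the lemma itself. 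There is a second, related issue: to run Hamilton's compactness on $[\tfrac12,1]$ you need an injectivity-radius (equivalently, volume) lower bound \emph{at some time in $[\tfrac12,1]$}, and that is precisely what you are trying to prove; the volume bound you have is at the rescaled time $0$, where the curvature is unbounded.

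The paper's argument avoids global tools entirely. It first proves a supporting lemma giving a lower bound on $\VolB_{g(t)}(y,\sqrt{t})$ at the \emph{parabolic} scale, via a blow-up to a complete limit with $\Ric\geq 0$ and then an iterated dimension-reduction (Toponogov-type splitting at infinity, applied $n-1$ times until one reaches $\R$). With that $\sqrt{t}$-scale noncollapsing in hand, the main lemma follows from a two-scale covering argument: one covers $B_{g(t)}(x_0,1)$ by $N$ balls of radius $\sim\sqrt{t}$, bounds $N$ from above using the $\sqrt{t}$-scale volume lower bound, transports the covering back to $t=0$ using the distance comparison lemmas, and bounds $N$ from below using the initial volume hypothesis. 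The contradiction between the two bounds on $N$ gives the result. No form of no-local-collapsing is used.
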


\cmt{a glance at the proof suggests that for $\ga=1$ and $K=1$, we can actually take $\ep_0$ to be a small universal constant times $v_0^n $. Scaling seems very weird, if correct.
What example would make $v_0^n $ seem like the right scaling?!!
Note also that the hypotheses are vacuous unless $v_0\leq \Om_n $ as we mention in 
\eqref{v0upperbd}.}

Thus, under somewhat weak curvature hypotheses, the volume of a ball of fixed radius cannot drop too rapidly. Without these hypotheses the theorem fails. Indeed, there exists a Ricci flow starting with the unit disc in Euclidean space whose volume becomes as small as we like in as small a time as we like. An example was given in \cite[Theorem A.3]{GT3} in two dimensions, which then trivially extends to arbitrary dimension. 

Even if one considers only \emph{complete} or \emph{closed} Ricci flows, then the volume of a unit disc could collapse to an arbitrarily small value in an arbitrarily short time, as we now briefly sketch: 
Take two round two-dimensional spheres of radius, say, $1/10$. Connect them by a tiny neck to give a rotationally symmetric dumbbell surface. We pick $x_0$ at the centre of the neck
and start the Ricci flow. Initially, the volume of the unit ball centred at $x_0$ is of order $1$ because the unit ball contains the two spheres. But virtually instantly, the two spheres fly apart and we are left standing on a really thin neck, and thus the volume of the unit ball centred at $x_0$ is now as small as we like. The example trivially extends to higher dimensions by taking a product with Euclidean space, and can be made rigorous using far simpler technology than that in \cite{RF_bursts}.

We relegate the proof of Lemma \ref{volume_control_lem} to Section \ref{vol_control_sect} because those familiar with the details of Cheeger-Colding theory (not just the statements) could take a different path
by verifying that the statements of Cheeger-Colding \cite{cheeger_colding_vol_cgnce} also hold in the case that the balls being considered are compactly contained in  possibly non-complete manifolds, at which point a more direct argument is possible.


At this point, we have three results that can each be viewed as weak forms of the main theorem. In Section \ref{mainthm_proof_sect} we will show how they can be combined to give the full theorem \ref{mainthm}. 
Much of the significance of these results is that they apply locally.
However, there are still two further ingredients that will be required in order to make the main theorem apply locally, namely the prior inclusion lemma and the pseudolocality improvement lemma, and we describe these in Sections \ref{sharkfinsect} and \ref{pseudo_improve_sect} respectively.
In order to understand the subtleties of the localisation aspects, we now sketch how one could 
prove the simpler global result in which we assume that the hypotheses of the theorem hold \emph{globally}, i.e. we have $\Ric_{g(0)}\geq -K$ throughout, and $\VolB_{g(0)}(x,1)\geq v_0>0$ for each $x\in M$, and ask for the lower volume bound of Conclusion \ref{volume_conseq} for \emph{all} $x_0$, and for the curvature control of Conclusions \ref{Ric_conseq_main} and \ref{conc3decay} globally.

By reducing (and then fixing) $v_0>0$, we can assume even that 
$\VolB_{g(0)}(x,r)\geq v_0 r^3$ for each $x\in M$ and $r\in (0,1]$, by Bishop-Gromov.
With these constants $v_0$ and $K$, and with $\ga=\half$, we can appeal to Lemma 
\ref{Ric_lower_lemma} for constants $\hat T$, $C_0$ and $\eta_0$, in preparation for the application of this lemma later. The lemma will be applied not to $g(t)$, but a
scaled-up version of $g(t)$. 
We can choose $c_0=C_0+1$ in the theorem.
In preparation to apply the lower Ricci bounds lemma \ref{DB_conseq}, also to a rescaled flow, 
we reduce $\hat T$ if necessary so that this lemma applies
with $\ti K=K/(100c_0)$. Here we assume the condition $|\Rm|_{g(t)} \leq \frac{c_0}{t}$ for the $c_0$ that we have just defined.

We now claim, as required in this global version of the theorem, that 
$|\Rm|_{g(t)}\leq \frac{c_0}{t}$ for all 
$t\in (0,\hat T/(100c_0)]$. If not, then 
take the first time $\de>0$ for which this estimate fails, and pick
$z_0\in M$ such that
$|\Rm|_{g(\de)}(z_0)= \frac{c_0}{\de}$.
If we then parabolically rescale up time by a factor so that $\de$ increases to $\hat T$, which is a factor of at least $100c_0$,
then we get a new Ricci flow $\tilde g(t)$ on $[0,\hat T]$ such that
$|\Rm|_{\tilde g(t)}\leq \frac{c_0}{t}$,
with equality at $t=\hat T$ at the point $z_0$.
Also, after the scaling, $\Ric_{\tilde g(0)}\geq -K/(100c_0)$.

We can then apply Lemma \ref{DB_conseq} to $\ti g(t)$, centred at $z_0$, to deduce that $\Ric_{\tilde g(t)}\geq -K$ in a ball centred at $z_0$,
for all $t\in [0,\hat T]$.
Applying Lemma \ref{Ric_lower_lemma}, again to $\ti g(t)$ and centred at $z_0$,
we deduce that 
$|\Rm|_{\tilde g(t)}(z_0)\leq C_0/t<c_0/t$ 
for $t\in [0,\hat T]$,
but by hypothesis, $|\Rm|_{\tilde g(\hat T)}(z_0)= \frac{c_0}{\hat T}$, which is a contradiction.
Thus, we've proved that
$$|\Rm|_{g(t)}\leq \frac{c_0}{t}\text{ over }t\in (0,\tilde T]$$
where $\tilde T:= \hat T/(100c_0)$.
Lemma \ref{DB_conseq}, 
applied this time to $g(t)$, then gives the 
bound 
$$\Ric_{g(t)}\geq - 100c_0K   \quad\text{ for }t\in [0,\ti T],$$
which is the second conclusion of the (global) theorem.
Finally, 
Lemma \ref{volume_control_lem} gives the desired volume lower bounds of the first conclusion of the theorem, and this completes the sketch proof of the main theorem in the global case.

The main subtlety that arises in the proof of the \emph{local} theorem \ref{mainthm} is that the contradiction point $z_0$ in the argument above could arise near the boundary of our local region. To overcome this we will need the full strength of the local aspects of our three main ingredients, and more. The proof can be found in Section \ref{mainthm_proof_sect}.


\section{Nested balls}
\label{balls_sect}

At the heart of this work is the remarkable behaviour of the distance function under Ricci flow.
In this section we use 
local versions of the estimates of Hamilton \cite[\S 17]{formations}
and Perelman \cite{P1},  valid in any dimension, to establish sharp control on when balls of different radii at different times are nested. 
None of the Ricci flows in this section need be complete.


\begin{lemma}[The expanding balls lemma]
\label{balls_lemma2}
Suppose $K>0$ and $0<r<R<\infty$, and define $T_0:=\frac{1}{K}\log (R/r)>0$.
Suppose $(M,g(t))$ is a Ricci flow for $t\in [0,T]$ on a manifold $M$ of any dimension.
Suppose that $x_0\in M$ and that $B_{g(t)}(x_0,R)\subset\subset M$
and $\Ric_{g(t)}\geq -K$ on $B_{g(t)}(x_0,R)$
for each $t\in [0,T]$.
Then
\beq
\label{claimline2}
B_{g(0)}(x_0,r)\subset B_{g(t)}(x_0,re^{Kt})\subset B_{g(t)}(x_0,R)
\eeq
for all $t\in [0,T]\intersect [0,T_0]$.
\end{lemma}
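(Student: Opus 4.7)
The plan is to handle the two inclusions separately. The second inclusion $B_{g(t)}(x_0,re^{Kt})\subset B_{g(t)}(x_0,R)$ is immediate from $t\leq T_0$, since by definition of $T_0$ we have $re^{Kt}\leq re^{KT_0}=R$.

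For the first inclusion, the core observation is that $\Ric\geq -K$ combined with the flow equation $\partial_t g=-2\Ric$ yields $\partial_t g\leq 2Kg$ on the region where the hypothesis is in force. For a fixed smooth curve $\gamma$ lying in that region, the resulting pointwise estimate $\partial_t|\dot\gamma|^2_{g(t)}\leq 2K|\dot\gamma|^2_{g(t)}$ integrates to $\frac{d}{ds}L_{g(s)}(\gamma)\leq K L_{g(s)}(\gamma)$, and hence $L_{g(s)}(\gamma)\leq e^{Ks}L_{g(0)}(\gamma)$. The strategy is to apply this length-distortion estimate to minimizing geodesics. Concretely, I fix $y\in B_{g(0)}(x_0,r)$, set $r_y:=d_{g(0)}(x_0,y)<r$, and aim to prove $d_{g(t)}(x_0,y)\leq r_y e^{Kt}$, which is strictly less than $re^{Kt}$, giving the desired inclusion.

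To prove this distance bound I would run a continuity argument on the set
$$J:=\bigl\{\tau\in[0,\min(T,T_0)] : d_{g(s)}(x_0,y)\leq r_y e^{Ks}\text{ for all }s\in[0,\tau]\bigr\}.$$
Then $0\in J$, and $J$ is closed by continuity of $s\mapsto d_{g(s)}(x_0,y)$, which holds thanks to the compact containment $B_{g(t)}(x_0,R)\subset\subset M$. Suppose for contradiction that $\tau_1:=\sup J<\min(T,T_0)$. Pick a $g(\tau_1)$-minimizing geodesic $\gamma$ from $x_0$ to $y$; its image lies in the compact set $\overline{B_{g(\tau_1)}(x_0,r_y e^{K\tau_1})}$, which is strictly contained in $B_{g(\tau_1)}(x_0,R)$ since $r_y e^{K\tau_1}<re^{K\tau_1}\leq R$. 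Using continuity of distances in $s$ uniformly on this compact set, I can find $\eta>0$ such that $\gamma\subset B_{g(s)}(x_0,R)$ for all $s\in[\tau_1,\tau_1+\eta]$. On this interval the Ricci hypothesis holds along $\gamma$, and the length-distortion estimate applied to the fixed curve $\gamma$ gives
$$d_{g(s)}(x_0,y)\leq L_{g(s)}(\gamma)\leq e^{K(s-\tau_1)}L_{g(\tau_1)}(\gamma)\leq r_y e^{Ks},$$
contradicting the maximality of $\tau_1$.

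The main obstacle I expect is the step of keeping the $g(\tau_1)$-minimizing geodesic $\gamma$ inside $B_{g(s)}(x_0,R)$ for $s$ slightly beyond $\tau_1$, since that is precisely where the Ricci hypothesis, and hence the length-distortion estimate, must be available. The strict inequality $re^{Kt}<R$ for $t<T_0$ and the compact containment $B_{g(t)}(x_0,R)\subset\subset M$ are exactly what makes this step work; once $\gamma$ is confined to the valid region for a short additional time, the rest is a routine application of the one-sided length-distortion estimate under a lower Ricci bound.
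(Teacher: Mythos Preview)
Your proof is correct and follows essentially the same strategy as the paper: a continuity argument driven by the length-distortion estimate coming from $\partial_t g\leq 2Kg$ under $\Ric\geq -K$. The paper's version is a minor variant, running the continuity at the level of the set inclusion $B_{g(0)}(x_0,r)\subset B_{g(t)}(x_0,R)$ and using a $g(0)$-minimising geodesic (which automatically stays in the region where the Ricci bound holds while that inclusion persists) rather than a $g(\tau_1)$-minimiser, but the content is the same.
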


\begin{proof}[Proof of the Lemma \ref{balls_lemma2}]
By scaling, we may assume that $R=1$.
Define 
\beqa
T_1 &:=\sup\left\{s\in [0,\min\{T,T_0\}]\ :\ 
B_{g(0)}(x_0,r)\subset B_{g(t)}(x_0,1) \text{ for all }t\in[0,s)\right\}\\
& \quad\in (0,\min\{T,T_0\}].
\eeqa
Because $B_{g(0)}(x_0,r)\subset B_{g(t)}(x_0,1)$ for all $t\in[0,T_1)$, we can exploit the Ricci lower bound hypothesis to find that for all $x\in B_{g(0)}(x_0,r)$, we have
$$d_{g(t)}(x,x_0)\leq e^{Kt}d_{g(0)}(x,x_0),$$
for all $t\in [0,T_1)$, and by continuity, even for $t\in [0,T_1]$.
In particular, we have 
\beq
\label{dist_est}
d_{g(t)}(x,x_0)< r e^{Kt}\leq e^{-K(T_0-T_1)}
\eeq
by definition of $T_0$, and hence that
$$B_{g(0)}(x_0,r)\subset B_{g(t)}(x_0,e^{-K(T_0-T_1)})$$
for all $t\in [0,T_1]$.
If $T_1<\min\{T,T_0\}$, so that $e^{-K(T_0-T_1)}<1$, then this would contradict the definition of $T_1$. Therefore we have $T_1=\min\{T,T_0\}$, and \eqref{dist_est} implies \eqref{claimline2} as required.
\end{proof}

With an \emph{upper} Ricci bound, we can nest balls in the opposite sense.
By convention, balls with negative radii are empty.

\begin{lemma}[The shrinking balls lemma]
\label{shrinking_balls_lemma}
Suppose $(M,g(t))$ is a Ricci flow for $t\in [0,T]$ on a manifold $M$ of any dimension $n$.
Then there exists a constant $\be\geq 1$ depending only on $n$
such that the following is true.
Suppose $x_0\in M$ and that $B_{g(0)}(x_0,r)\subset\subset M$ 
for some $r>0$, and 
$\Ric_{g(t)}\leq (n-1)f^2(t)$ on $B_{g(0)}(x_0,r)$, or merely on 
$B_{g(0)}(x_0,r)\intersect B_{g(t)}\left(\textstyle{x_0,r-\frac{\be}{2}\int_0^t f}\right)$,
for each $t\in (0,T]$, where $f:(0,T]\to [0,\infty)$ is a continuous integrable function.
Then 
\beq
\label{shrinking_claimline}
B_{g(0)}(x_0,r)\supset B_{g(t)}\left(\textstyle{x_0,r-\frac{\be}{2}\int_0^t f}\right)
\eeq
for all $t\in [0,T]$.
\end{lemma}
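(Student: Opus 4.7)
The plan is to derive \eqref{shrinking_claimline} from a localised version of the Perelman--Hamilton distance distortion estimate (cf.\ \cite[\S 17]{formations}, \cite{P1}), combined with a bootstrap in time that handles the fact that the upper Ricci bound is only assumed on a time-dependent region. The key pointwise fact is that if at time $s$ one has $\Ric_{g(s)}\leq (n-1)K$ on $B_{g(s)}(x_0,r_0)\cup B_{g(s)}(y,r_0)$ for some $r_0>0$ and $d_{g(s)}(y,x_0)\geq 2r_0$, then the distance satisfies (in the sense of lower Dini derivatives)
$$\frac{d^-}{ds}\,d_{g(s)}(y,x_0)\;\geq\;-C(n)\bigl(Kr_0+\tfrac{1}{r_0}\bigr).$$
Choosing $K=f^2(s)$ and $r_0=1/f(s)$ balances the right-hand side and gives the clean bound $\frac{d^-}{ds}d_{g(s)}(y,x_0)\geq -\tfrac{\beta}{2}f(s)$ for a suitable dimensional constant $\beta=\beta(n)\geq 1$; the case $f(s)=0$ is vacuous and times where $f$ is very small can be absorbed into $\beta$.

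For each fixed $y$, I would consider $\phi_y(s):=d_{g(s)}(y,x_0)+\tfrac{\beta}{2}\int_0^s f(\tau)\,d\tau$. Wherever the hypotheses of the distortion estimate hold, $\phi_y$ has non-negative lower Dini derivative, hence is non-decreasing, and the desired inclusion follows immediately: any $y\in B_{g(t)}\!\left(x_0,r-\tfrac{\beta}{2}\int_0^t f\right)$ has $\phi_y(t)<r$, and monotonicity forces $d_{g(0)}(y,x_0)=\phi_y(0)<r$. To verify the hypotheses on all of $[0,T]$, I would let $T_*$ denote the supremum of times $s_0\in[0,T]$ for which \eqref{shrinking_claimline} holds for every $s\in[0,s_0]$; trivially $T_*>0$ by continuity. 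On $[0,T_*)$ the inclusion grants access to the upper Ricci bound throughout $B_{g(s)}(x_0,r-\tfrac{\beta}{2}\int_0^s f)$, so for any $y$ in this ball the small balls $B_{g(s)}(x_0,r_0)$ and $B_{g(s)}(y,r_0)$ with $r_0=1/f(s)$ sit inside the hypothesis region provided $r_0$ is small enough (again absorbing constants into $\beta$). The distortion estimate then delivers monotonicity of $\phi_y$ up to $T_*$, which extends the inclusion past $T_*$ by a short open-and-closed argument and forces $T_*=T$.

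The main obstacle I anticipate is the short-distance regime where $d_{g(s)}(y,x_0)<2/f(s)$: then the two ``small balls'' around $y$ and $x_0$ effectively overlap and the standard Perelman estimate degenerates. This is handled either by the trivial observation that $d_{g(s)}(y,x_0)\geq 0$ controls the cumulative shortfall $\phi_y(s)-\phi_y(0)$ for free, or by shrinking $r_0$ below $1/f(s)$ and absorbing the worse constant into the choice of $\beta\geq 1$. A secondary technical point is the Lipschitz-only regularity of $s\mapsto d_{g(s)}(y,x_0)$, which forces the derivative bound to be interpreted via lower Dini derivatives or forward difference quotients; this is routine in the distance-distortion literature but should be stated carefully so that the monotonicity of $\phi_y$ is genuinely pointwise and not just distributional.
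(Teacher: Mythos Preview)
Your strategy via the Perelman distance distortion estimate is in the right spirit, but there is a genuine gap in how you verify its hypotheses. You invoke the version requiring $\Ric\leq (n-1)K$ on $B_{g(s)}(x_0,r_0)\cup B_{g(s)}(y,r_0)$ with $r_0=1/f(s)$, and assert these balls lie in the hypothesis region ``provided $r_0$ is small enough (again absorbing constants into $\beta$)''. But the bound $C(n)(Kr_0+r_0^{-1})$ degrades as $r_0\to 0$ because of the $r_0^{-1}$ term, so you cannot simultaneously shrink $r_0$ and keep a uniform dimensional $\beta$. And when $f(s)$ is small, $r_0=1/f(s)$ is large; the ball $B_{g(s)}(y,r_0)$ may extend well outside $B_{g(0)}(x_0,r)\cap B_{g(s)}\bigl(x_0,r-\tfrac{\beta}{2}\int_0^s f\bigr)$ even when $y$ itself lies there. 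Your proposed fix for the short-distance regime has the same defect. There is no way to absorb this into a dimensional constant.

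The paper circumvents this entirely. It introduces an $\ep$-buffer, supposes the strengthened inclusion first fails at some $(y,t_0)$ with $d_{g(0)}(y,x_0)=r$, and then works with a single minimising $g(t_0)$-geodesic $\gamma$ from $x_0$ to $y$. The key point is that $\gamma$ lies inside $\overline{B_{g(0)}(x_0,r)}\cap B_{g(t_0)}\bigl(x_0,r-\tfrac{\beta}{2}\int_0^{t_0} f\bigr)$, so the Ricci bound holds along the \emph{entire} geodesic at time $t_0$; one then uses the whole-geodesic form of the Hamilton--Perelman estimate (which already absorbs the optimisation over $r_0$, with the short-geodesic case handled by the trivial bound $\int_\gamma\Ric(\dot\gamma,\dot\gamma)\leq (n-1)K\cdot L$) to get $\tfrac{d}{dt}\big|_{t=t_0}L_{g(t)}(\gamma)\geq -\tfrac{\beta}{2}f(t_0)$. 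Comparing with the lower bound $L_{g(s)}(\gamma)\geq d_{g(s)}(x_0,y)>r-\tfrac{\beta}{2}\int_0^s f-\ep(1+s)$ for $s<t_0$ (which follows from $y\notin B_{g(0)}(x_0,r)$ and $t_0$ being the first failure time) yields the contradictory upper bound $\tfrac{d}{dt}\big|_{t=t_0}L_{g(t)}(\gamma)\leq -\tfrac{\beta}{2}f(t_0)-\ep$. Your monotone-$\phi_y$ approach can be repaired by switching to this whole-geodesic version (the time-$s$ minimising geodesic from $x_0$ to $y$ lies in the shrinking ball whenever $\phi_y(s)<r$, and a backward continuity argument then propagates this), but as written the endpoint-balls version does not close.
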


\cmt{probably we can drop the continuity of $f$, but I think it would be an unnecessary distraction. Anyone needing it could probably get it from our statement and a messy approximation argument}

\cmt{used to have $\be=2\al$. The constant $\be$ is always at least about 6.52, so we've updated the statements above and below to have $\be\geq 1$, which simplifies phrasing of proofs. There may be some remaining simplifications in proofs to be had from this...}

Noting that $|\Rm|$ bounds the size of the largest sectional curvature, we instantly have the following corollary, which will account for most applications in this paper.

\begin{cor}[The shrinking balls corollary]
\label{shrinking_balls_cor}
Suppose $(M,g(t))$ is a Ricci flow for $t\in [0,T]$ on a manifold $M$ of any dimension $n$.
Then with $\be=\be(n)\geq 1$ as in Lemma \ref{shrinking_balls_lemma},  the following is true.
Suppose $x_0\in M$ and that $B_{g(0)}(x_0,r)\subset\subset M$ 
for some $r>0$,  
and $|\Rm|_{g(t)}\leq c_0/t$,
or more generally $\Ric_{g(t)}\leq (n-1)c_0/t$,
on $B_{g(0)}(x_0,r)\intersect B_{g(t)}(x_0,r-\be\sqrt{c_0 t})$ 
for each $t\in (0,T]$ and some $c_0>0$.
Then 
\beq
B_{g(0)}(x_0,r)\supset B_{g(t)}\left(\textstyle{x_0,r-\be\sqrt{c_0 t}}\right)
\eeq
for all $t\in [0,T]$.
More generally, for $0\leq s\leq t\leq T$, we have 
$$B_{g(s)}\left(\textstyle{x_0,r-\be\sqrt{c_0 s}}\right)
\supset B_{g(t)}\left(\textstyle{x_0,r-\be\sqrt{c_0 t}}\right).$$
\end{cor}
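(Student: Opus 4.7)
The first claim is a direct application of the shrinking balls lemma. I would take $f:(0,T]\to[0,\infty)$ defined by $f(t)=\sqrt{c_0/t}$, which is continuous on $(0,T]$ and integrable at $0$ with $\int_0^t f = 2\sqrt{c_0 t}$, so that $r-\tfrac{\be}{2}\int_0^t f = r-\be\sqrt{c_0 t}$. Then $(n-1)f^2(t) = (n-1)c_0/t$, and the hypothesis $|\Rm|_{g(t)}\le c_0/t$ (or already $\Ric_{g(t)}\le (n-1)c_0/t$) on $B_{g(0)}(x_0,r)\cap B_{g(t)}(x_0, r-\be\sqrt{c_0 t})$ is exactly the weakened curvature hypothesis needed to invoke Lemma \ref{shrinking_balls_lemma}. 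The conclusion \eqref{shrinking_claimline} then reads $B_{g(0)}(x_0,r)\supset B_{g(t)}(x_0,r-\be\sqrt{c_0 t})$, which is what we want.

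For the second, more general nesting statement, my plan is to time-shift the flow. For fixed $s\in[0,T]$, set $\tilde g(\tau):=g(s+\tau)$ for $\tau\in[0,T-s]$, fix the new radius $\tilde r:=r-\be\sqrt{c_0 s}$ (which is nonnegative or else the statement is vacuous by the convention on negative radii), and note that by the first part of the corollary $B_{g(s)}(x_0,\tilde r)=B_{\tilde g(0)}(x_0,\tilde r)\subset B_{g(0)}(x_0,r)\subset\subset M$. I would then apply Lemma \ref{shrinking_balls_lemma} to $\tilde g(\tau)$ with $\tilde f(\tau):=\sqrt{c_0/(s+\tau)}$. A short computation gives
\[
\tilde r-\tfrac{\be}{2}\int_0^\tau \tilde f(\tau')\,d\tau' \;=\; r-\be\sqrt{c_0 s}-\be\bigl(\sqrt{c_0(s+\tau)}-\sqrt{c_0 s}\bigr)\;=\;r-\be\sqrt{c_0 t},
\]
where $t=s+\tau$. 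Thus $(n-1)\tilde f^2(\tau)=(n-1)c_0/t$, and the required curvature hypothesis on $B_{\tilde g(0)}(x_0,\tilde r)\cap B_{\tilde g(\tau)}(x_0, r-\be\sqrt{c_0 t})$ follows from the assumed bound on $B_{g(0)}(x_0,r)\cap B_{g(t)}(x_0,r-\be\sqrt{c_0 t})$, since $B_{g(s)}(x_0,\tilde r)\subset B_{g(0)}(x_0,r)$ by the first part. The conclusion of the shrinking balls lemma applied to $\tilde g$ is precisely $B_{g(s)}(x_0,r-\be\sqrt{c_0 s})\supset B_{g(t)}(x_0,r-\be\sqrt{c_0 t})$.

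There is essentially no obstacle here; the only point requiring any care is verifying that, after time-shifting, the localised curvature hypothesis of Lemma \ref{shrinking_balls_lemma} is actually inherited from the hypothesis given at time $0$, which is handled by the containment already established in the first part.
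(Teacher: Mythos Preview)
Your proposal is correct and is exactly the argument the paper has in mind: the authors state the corollary as an ``instant'' consequence of Lemma~\ref{shrinking_balls_lemma} with $f(t)=\sqrt{c_0/t}$, and your time-shift with $\tilde f(\tau)=\sqrt{c_0/(s+\tau)}$ together with the containment $B_{g(s)}(x_0,r-\be\sqrt{c_0 s})\subset B_{g(0)}(x_0,r)$ from the first part is the natural way to obtain the more general nesting, which the paper does not spell out.
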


\cmt{In fact, we only need Ricci control on a smaller region (a la Perelman) but it's a bit complicated - we need control near the centre and near the boundary - so let's forget about it}

\begin{proof}[Proof of Lemma \ref{shrinking_balls_lemma}]
It is a result of Hamilton and Perelman (embedded in the discussion in \cite[\S 17]{formations} and \cite[Lemma 8.3(b)]{P1}, using the idea of Bonnet-Myers) that if $g(t)$ is a Ricci flow on $M$ for $t$ in a nontrivial interval containing $t_0\in \R$, and $\ga:[0,1]\to M$ is any minimising geodesic with respect to $g(t_0)$ such that $\Ric_{g(t_0)}\leq (n-1)K$ at all points along $\ga$, then 
\beq
\label{Lderiv}
\frac{d}{dt}\bigg|_{t=t_0}L_{g(t)}(\ga)\geq -\frac{\be}{2}\sqrt{K},
\eeq
where $\be:=8\sqrt{2/3}(n-1)\geq 1$. This $\be$ is to be the value of $\be$ in the lemma.

\cmt{The proof here is that if we reparametrise $\ga$ by arclength, over $[0,L]$, then 
$$-\frac{d}{dt}\bigg|_{t=t_0}L_{g(t)}(\ga)=\int_\ga \Ric(\dot\ga,\dot\ga).$$
This integral can be controlled using the second variation formula, applied to variations as in Perelman's work. We get an upper bound of $2(n-1)(2/3 Kr_0+r_0^{-1})$ where $r_0\leq L/2$, and then we optimise the estimate over $r_0$ (keeping in mind that $r_0=L/2$ might be best). It might be useful at some point to note that we only really need the upper Ricci bound within a distance $r_0$ of the endpoints.}

To prove the lemma, it suffices to show that for arbitrary $\ep>0$, we have
\beq
\label{shrinking_claimline2}
B_{g(0)}(x_0,r)\supset \overline{B_{g(t)}\left(\textstyle{x_0,r-\frac{\be}{2}\int_0^t f}-\ep(1+t)\right)}
\eeq
for all $t\in [0,T]$.

Suppose instead that this is false for some Ricci flow, $\ep>0$ and function $f$ etc.
Then there exists a first time $t_0\in(0,T]$ at which it fails, and we can find 
a point 
$$y \in \overline{B_{g(t_0)}\left(\textstyle{x_0,r-\frac{\be}{2}\int_0^{t_0} f}-\ep(1+t_0)\right)}$$
such that $d_{g(0)}(y,x_0)=r$.
We can then find a minimising geodesic $\ga$ connecting $x_0$ and $y$ with length 
\beq
\label{L1}
L_{g(t_0)}(\ga)\leq r-\frac{\be}{2}\int_0^{t_0} f-\ep(1+t_0),
\eeq
all of which lies within both $\overline{B_{g(0)}(x_0,r)}$ and
$B_{g(t_0)}\left(\textstyle{x_0,r-\frac{\be}{2}\int_0^{t_0} f}\right)$,
where $\Ric_{g(t_0)}\leq (n-1)f^2(t_0)$ has been assumed. In particular, by \eqref{Lderiv},
we have
\beq
\label{another_Lderiv}
\frac{d}{dt}\bigg|_{t=t_0}L_{g(t)}(\ga)\geq -\frac{\be}{2} f(t_0).
\eeq
On the other hand, because $t_0$ is the first time at which \eqref{shrinking_claimline2} fails, 
and $y\notin B_{g(0)}(x_0,r)$, for all $s\in [0,t_0)$ we have
$d_{g(s)}(x_0,y)>r-\frac{\be}{2}\int_0^s f-\ep(1+s)$, and in particular,
$$L_{g(s)}(\ga)>r-\frac{\be}{2}\int_0^s f-\ep(1+s).$$
Subtracting from \eqref{L1}, we obtain
$$L_{g(t_0)}(\ga)-L_{g(s)}(\ga)<-\frac{\be}{2}\int_s^{t_0} f-\ep(t_0-s),$$
and hence 
$$\frac{d}{dt}\bigg|_{t=t_0}L_{g(t)}(\ga)\leq -\frac{\be}{2} f(t_0) -\ep,$$
which contradicts \eqref{another_Lderiv}.
\end{proof}

The following lemma gives the nesting of balls whose centre is different from the centre of the balls on which we have curvature control. It also gives control on the distance function, 
for possible future use.

\begin{lemma}[Displaced balls and distance comparison lemma]
\label{DCL3}
Suppose that $(M^n,g(t))$ is a Ricci flow for $t\in [0,T]$, and 
that for some $x_0\in M$ and $r>0$, we have $B_{g(t)}(x_0,5r)\subset\subset M$ for all $t\in [0,T]$.
Suppose further that for some $K>0$ we have
$$\Ric_{g(t)}\geq -K<0\quad\text{ on }B_{g(t)}(x_0,5r),$$
for all $t\in [0,T]$.
Then for any $t\in [0,\min\{T, \frac{1}{K}\log\frac{3}{2}\}]$ there holds:
\begin{compactenum}
\item
For any $x\in B_{g(0)}(x_0,r)$ and radius $s\in [0,2r]$, we have
$B_{g(0)}(x,s)\subset B_{g(t)}(x,se^{Kt})\subset B_{g(t)}(x_0,5r)$;
\item
For any $x,y\in B_{g(0)}(x_0,r)$, we have
$d_{g(t)}(x,y)\leq e^{Kt}d_{g(0)}(x,y)$.
\end{compactenum}
If in addition $|\Rm|_{g(t)}\leq c_0/t$, or more generally
$\Ric_{g(t)}\leq (n-1)c_0/t$, on $B_{g(t)}(x_0,5r)$,
for some $c_0>0$ and for all $t\in [0,T]$, then taking $\be=\be(n)\geq 1$ 
as in Lemma \ref{shrinking_balls_lemma}, and  taking any 
$t\in [0,\min\{T,\frac{1}{K}\log \frac{5}4\}]$, there holds:
\begin{compactenum}
\setcounter{enumi}{2}
\item
For any $x\in B_{g(0)}(x_0,r)$ and radius $s\in [0,3r]$, we have
$B_{g(0)}(x,s)\supset B_{g(t)}\left(\textstyle{x,s-\be\sqrt{c_0 t}}\right)$;
\item
For any $x,y\in B_{g(0)}(x_0,r)$, if additionally $t\leq \frac{r^2}{4c_0\be^2}$, then we have
\beq
\label{distcontrolnew2}
d_{g(0)}(x,y)-\be\sqrt{c_0 t}\leq d_{g(t)}(x,y).
\eeq
\end{compactenum}
\end{lemma}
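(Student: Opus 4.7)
The strategy is to apply the expanding balls lemma (Lemma \ref{balls_lemma2}) and the shrinking balls corollary (Corollary \ref{shrinking_balls_cor}) with center $x$ in place of $x_0$, using the hypotheses (which are stated about $x_0$) to guarantee that the auxiliary balls we introduce remain inside $B_{g(t)}(x_0,5r)$, where the curvature bounds hold and compact containment in $M$ is available.

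I would begin by applying Lemma \ref{balls_lemma2} directly with center $x_0$, small radius $r$, and large radius $5r$, which gives $d_{g(t)}(x,x_0)\leq re^{Kt}\leq \tfrac{3r}{2}$ for every $x\in B_{g(0)}(x_0,r)$ whenever $t\leq \tfrac{1}{K}\log\tfrac{3}{2}$. Consequently $B_{g(t)}(x,\tfrac{7r}{2})\subset B_{g(t)}(x_0,5r)$ for such $t$, inheriting both the compact containment and the Ricci lower bound. Now reapply Lemma \ref{balls_lemma2} centered at $x$ with small radius $s\in [0,2r]$ and large radius $\tfrac{7r}{2}$; since $\tfrac{1}{K}\log(7r/(2s))\geq \tfrac{1}{K}\log\tfrac{7}{4}>\tfrac{1}{K}\log\tfrac{3}{2}$, its conclusion is available throughout the window of part 1 and yields the chain $B_{g(0)}(x,s)\subset B_{g(t)}(x,se^{Kt})\subset B_{g(t)}(x_0,5r)$. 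Part 2 follows by applying part 1 with $s$ slightly larger than $d_{g(0)}(x,y)<2r$: then $y\in B_{g(0)}(x,s)\subset B_{g(t)}(x,se^{Kt})$, so $d_{g(t)}(x,y)<se^{Kt}$, and letting $s\downarrow d_{g(0)}(x,y)$ gives the inequality.

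With the additional upper Ricci bound and the shorter time interval $t\leq \tfrac{1}{K}\log\tfrac{5}{4}$, I prove part 3 by applying Corollary \ref{shrinking_balls_cor} centered at $x$ with radius $s\in[0,3r]$. Compact containment of $B_{g(0)}(x,s)$ is clear from $B_{g(0)}(x,s)\subset B_{g(0)}(x_0,4r)\subset\subset M$. For the curvature hypothesis it suffices to show $B_{g(t)}(x,s-\be\sqrt{c_0 t})\subset B_{g(t)}(x_0,5r)$: part 2 applied with $y=x_0$ gives $d_{g(t)}(x,x_0)\leq re^{Kt}\leq \tfrac{5r}{4}$, so $B_{g(t)}(x,s-\be\sqrt{c_0 t})\subset B_{g(t)}(x_0,s+\tfrac{5r}{4})\subset B_{g(t)}(x_0,\tfrac{17r}{4})\subset B_{g(t)}(x_0,5r)$. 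The corollary then yields $B_{g(0)}(x,s)\supset B_{g(t)}(x,s-\be\sqrt{c_0 t})$. Finally, part 4 is a contradiction argument: assuming $d_{g(t)}(x,y)<d_{g(0)}(x,y)-\be\sqrt{c_0 t}$, set $s_0:=d_{g(0)}(x,y)<2r\leq 3r$; part 3 with $s=s_0$ forces $y\in B_{g(t)}(x,s_0-\be\sqrt{c_0 t})\subset B_{g(0)}(x,s_0)$, contradicting $d_{g(0)}(x,y)=s_0$. The restriction $t\leq r^2/(4c_0\be^2)$ ensures $\be\sqrt{c_0 t}\leq \tfrac{r}{2}$, keeping the shrunken ball meaningful even when $d_{g(0)}(x,y)$ approaches $2r$.

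The only genuinely delicate point is the interdependence of the four conclusions: part 2 is invoked inside the proof of part 3 to localise $x$ relative to $x_0$ under $g(t)$, so the argument must be carried out in the stated order, and the numerology of radii ($5r\to \tfrac{7r}{2}, \tfrac{17r}{4}$) and of the two time windows ($\tfrac{1}{K}\log\tfrac{3}{2}$ and $\tfrac{1}{K}\log\tfrac{5}{4}$) must be chosen consistently so that every intermediate ball lives inside the region where the hypotheses of Lemma \ref{balls_lemma2} and Corollary \ref{shrinking_balls_cor} apply.
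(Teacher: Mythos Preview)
Your proof is correct and follows essentially the same approach as the paper's: a first application of the expanding balls lemma centred at $x_0$ to localise $x$ in the $g(t)$-metric, a second application centred at $x$ for part~1, a limiting/closure argument for part~2, the shrinking balls corollary centred at $x$ for part~3, and then part~3 fed back into part~4. The only differences are cosmetic choices of intermediate radii (you use $5r$, $\tfrac{7r}{2}$, $\tfrac{17r}{4}$ where the paper uses $2r$, $3r$) and your contradiction argument for part~4 in place of the paper's direct choice $s:=d_{g(t)}(x,y)+\be\sqrt{c_0 t}$; in fact your version of part~4 does not actually require the extra constraint $t\le r^2/(4c_0\be^2)$, since $s_0=d_{g(0)}(x,y)<2r\le 3r$ automatically, whereas the paper genuinely uses that constraint together with part~2 to ensure $s\le 3r$.
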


\cmt{we left a little implicit the fact that $d_{g(t)}(x,y)$ is always realised by a geodesic that lies within the ball $B_{g(t)}(x_0,5r)$.}

\begin{proof}[Proof of Lemma \ref{DCL3}]
By Lemma \ref{balls_lemma2}, with $R=2r$, we have
$$x\in B_{g(0)}(x_0,r)\subset B_{g(t)}(x_0,2r)$$
for $t\in [0,T]$ such that $t\leq \frac{1}{K}\log 2$, and hence
\beq
\label{3to5nest}
B_{g(t)}(x,3r)\subset B_{g(t)}(x_0,5r).
\eeq
Since this is where we have the Ricci lower bound, we can apply Lemma \ref{balls_lemma2} again, this time centred at $x$, and with $r$ there equal to $s\in [0,2r]$ and $R=3r$ to find that
\beq
B_{g(0)}(x,s)\subset B_{g(t)}(x,se^{Kt})\subset 
B_{g(t)}(x,3r),
\eeq
%
for $t\in [0,T]$ with $t\leq \frac{1}{K}\log \frac{3}2$, as required in the first part of the lemma.
For the second part, we can take $s=d_{g(0)}(x,y)$, in which case
$$y\in \overline{B_{g(0)}(x,s)}\subset \overline{B_{g(t)}(x,se^{Kt})}$$
as required for part 2.

Now assume that we have the additional upper curvature bound, which by \eqref{3to5nest}
will hold throughout $B_{g(t)}(x,3r)$. Part 3 then follows instantly from Corollary \ref{shrinking_balls_cor}. 
To prove part 4, we will apply part 3 with $s:=d_{g(t)}(x,y)+\be\sqrt{c_0 t}$.
Part 2 of the lemma tells us that
$d_{g(t)}(x,y)\leq \frac{5}{4}d_{g(0)}(x,y)\leq \frac{5r}{2}$, and so $s\leq 3r$ as required
for an application of part 3, which then tells us that
$$y\in \overline{B_{g(t)}(x,d_{g(t)}(x,y))}=
\overline{B_{g(t)}(x,s-\be\sqrt{c_0 t})}\subset
\overline{B_{g(0)}(x,s)},$$
i.e. that $d_{g(0)}(x,y)\leq s$, as required.
\end{proof}

\begin{proof}[Proof of Proposition \ref{dist_equiv_prop}]
By parabolically scaling up the Ricci flow $g(t)$ so that the balls of radius $1$ become of radius $2$, we can apply Theorem \ref{mainthm} with $\si=1$. Scaling back to the original flow, we are left with bounds $\Ric_{g(t)}\geq -\ti K$ and 
$|\Rm|_{g(t)}\leq \frac{c_0}{t}$ on $B_{g(t)}(x_0,1/2)$, for each 
$t\in [0,T)\intersect (0,\tilde T)$, where $\ti T$, $c_0$ and $\ti K$ depend only on $v_0$ and $K$.
We can therefore apply Lemma \ref{DCL3} with $r=1/10$. The second and fourth parts of that lemma imply the proposition, for a possibly smaller
$\ti T>0$.
\end{proof}

\section{Prior inclusion lemma}
\label{sharkfinsect}

Any smooth locally defined Ricci flow will enjoy $c_0/t$ curvature decay over some short time interval. The following useful lemma tells us that while this is true on a rapidly shrinking local ball, we can take any point $z_0$ in one of these rapidly shrinking balls, say at time $t_0$, and be sure that there is a well-controlled space-time region centred at $z_0$, that lives within the region with $c_0/t$ curvature decay.
The closer this space-time `cylinder' gets to time $t_0$, the thinner we must make it in space.

\begin{lemma}[Prior inclusion lemma]
\label{sharks_fin_lemma}
Given an integer $n\geq 2$, take $\be\geq 1$ as in Lemma \ref{shrinking_balls_lemma}.
Suppose that $(M^n,g(t))$ is a Ricci flow for $t\in [0,T)$,
$x_0\in M$, $r_0>0$, $c_0>0$, $L>0$ and 
$t_0\in (0,T)$ with $t_0<\frac{r_0^2}{\be^2c_0(L+1)^2}$. 
Suppose further that $B_{g(0)}(x_0,r_0)\subset\subset M$, and 
$|\Rm|_{g(t)}\leq\frac{c_0}{t}$, or more generally
$\Ric_{g(t)}\leq (n-1)\frac{c_0}{t}$
throughout
$B_{g(0)}(x_0,r_0)\intersect
B_{g(t)}(x_0,r_0-(L+1)\be\sqrt{c_0 t})\text{ for }t\in (0,t_0]$.
Then for any
$z_0\in \overline{B_{g(t_0)}(x_0,r_0-(L+1)\be\sqrt{c_0 t_0})}$ 
and for all $\al\in (0,1)$ and $t\in [0,\al^2 t_0]$,
we have
$$B_{g(t)}(z_0,L(1-\al)\be\sqrt{c_0 t_0})\ \subset\ 
B_{g(t)}(x_0,r_0-(L+1)\be\sqrt{c_0 t})\ \subset\ B_{g(0)}(x_0,r_0).$$
\end{lemma}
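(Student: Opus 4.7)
The plan is to deduce both inclusions from two applications of the shrinking balls machinery of Section \ref{balls_sect}, the key observation being that the upper Ricci hypothesis, stated on the narrow region $B_{g(0)}(x_0,r_0)\intersect B_{g(s)}(x_0, r_0 - (L+1)\be\sqrt{c_0 s})$, is exactly what is required to run Lemma \ref{shrinking_balls_lemma} at outer radius $r_0$ with the inflated rate $f(s)=(L+1)\sqrt{c_0/s}$, and is simultaneously strong enough to run Corollary \ref{shrinking_balls_cor} at a reduced outer radius $\rho := r_0 - L\be\sqrt{c_0 t_0}$ with the standard rate $\sqrt{c_0/s}$. The first application will give the outer inclusion immediately, and the second will give a sharp bound on $d_{g(\tau)}(x_0,z_0)$ that, via the triangle inequality and the restriction $t\le\al^2 t_0$, yields the inner inclusion.

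For the second (outer) inclusion I would apply Lemma \ref{shrinking_balls_lemma} with $r=r_0$ and $f(s)=(L+1)\sqrt{c_0/s}$. Then $\Ric_{g(s)} \le (n-1)c_0/s \le (n-1)f^2(s)$, and since $\tfrac{\be}{2}\int_0^s f = (L+1)\be\sqrt{c_0 s}$, the region on which the Ricci bound is demanded by the lemma coincides exactly with the region on which it is supplied by hypothesis. The lemma yields $B_{g(0)}(x_0,r_0) \supset B_{g(t)}(x_0, r_0-(L+1)\be\sqrt{c_0 t})$ for all $t\in[0,t_0]$.

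For the first (inner) inclusion, set $\rho := r_0 - L\be\sqrt{c_0 t_0}$, which is positive because $t_0 < r_0^2/(\be^2c_0(L+1)^2)$. Using $\sqrt{t_0}\ge\sqrt{s}$, one checks that $B_{g(0)}(x_0,\rho)\intersect B_{g(s)}(x_0,\rho-\be\sqrt{c_0 s}) \subset B_{g(0)}(x_0,r_0)\intersect B_{g(s)}(x_0, r_0-(L+1)\be\sqrt{c_0 s})$ for each $s\in(0,t_0]$, so the bound $|\Rm|_{g(s)}\le c_0/s$ is available on the region needed by the corollary. The extended form of Corollary \ref{shrinking_balls_cor} then gives, for every $\tau\in[0,t_0]$,
$$B_{g(\tau)}(x_0,\rho-\be\sqrt{c_0\tau}) \supset B_{g(t_0)}(x_0,\rho-\be\sqrt{c_0 t_0}) = B_{g(t_0)}(x_0, r_0-(L+1)\be\sqrt{c_0 t_0}),$$
and passing to closures we obtain the key distance estimate $d_{g(\tau)}(x_0,z_0) \le r_0 - L\be\sqrt{c_0 t_0} - \be\sqrt{c_0\tau}$.

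Finally, for any $y \in B_{g(t)}(z_0, L(1-\al)\be\sqrt{c_0 t_0})$ with $t\in[0,\al^2 t_0]$, the triangle inequality combined with this bound yields
$$d_{g(t)}(x_0,y) < r_0 - L\be\sqrt{c_0 t_0} - \be\sqrt{c_0 t} + L(1-\al)\be\sqrt{c_0 t_0} = r_0 - \be\sqrt{c_0 t} - L\al\be\sqrt{c_0 t_0},$$
and the constraint $\sqrt{t}\le \al\sqrt{t_0}$ upgrades $-L\al\be\sqrt{c_0 t_0}\le -L\be\sqrt{c_0 t}$, giving $d_{g(t)}(x_0,y) < r_0 - (L+1)\be\sqrt{c_0 t}$, which is the inner inclusion. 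The one subtle point, and the reason for running two distinct applications of shrinking balls with two different $(r,f)$ pairs, is that a single application cannot simultaneously produce the outer containment and a bound on $d_{g(\tau)}(x_0,z_0)$ sharp enough to absorb the radius $L(1-\al)\be\sqrt{c_0 t_0}$ of the ball around $z_0$; the stronger-than-needed hypothesis must be exploited twice.
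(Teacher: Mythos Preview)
Your proof is correct and follows essentially the same approach as the paper: two applications of the shrinking balls machinery, first at outer radius $r_0$ with the inflated rate (equivalently, with $c_0$ replaced by $(L+1)^2c_0$ in Corollary~\ref{shrinking_balls_cor}) to get the outer inclusion, then at the reduced radius $\rho=r_0-L\be\sqrt{c_0 t_0}$ with the standard rate to bound $d_{g(t)}(x_0,z_0)$, followed by the triangle inequality and the constraint $\sqrt{t}\le\al\sqrt{t_0}$. Your explicit verification that the curvature hypothesis is available on $B_{g(0)}(x_0,\rho)\cap B_{g(s)}(x_0,\rho-\be\sqrt{c_0 s})$ is a detail the paper leaves implicit.
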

\begin{proof}
The shrinking balls corollary \ref{shrinking_balls_cor}, with $c_0$ there equal to
$(L+1)^2c_0$ here, and $r$ there equal to $r_0$ here, tells us that 
$B_{g(t)}(x_0,r_0-(L+1)\be\sqrt{c_0 t})\subset B_{g(0)}(x_0,r_0)$ for all $t\in [0,t_0]$.
We then make a second application of the shrinking balls corollary \ref{shrinking_balls_cor}, but now with $c_0$ there equal to $c_0$ here, and
with $r<r_0$ chosen so that the rapidly shrinking ball $B_{g(t)}(x_0,r_0-(L+1)\be\sqrt{c_0 t})$ above
agrees with the (previously smaller) ball $B_{g(t)}(x_0,r-\be\sqrt{c_0 t})$ at time $t_0$.
That is, we take $r=r_0-L\be \sqrt{c_0 t_0}>0$.
The output of the shrinking balls corollary is now, for each $t\in [0,t_0]$, the inclusion
$$B_{g(t)}\left(\textstyle{x_0,r-\be\sqrt{c_0 t}}\right)
\supset B_{g(t_0)}\left(\textstyle{x_0,r-\be\sqrt{c_0 t_0}}\right)
=B_{g(t_0)}(x_0,r_0-(L+1)\be\sqrt{c_0 t_0}),$$
and so by definition of $r$, we have
$$z_0\in \overline{B_{g(t)}\left(\textstyle{x_0,r_0-L\be \sqrt{c_0 t_0}-\be\sqrt{c_0 t}}\right)}.$$
Fattening by an amount $L\be\sqrt{c_0}(\sqrt{t_0}-\sqrt{t})$, we find that
$$B_{g(t)}\left(z_0,L\be\sqrt{c_0}(\sqrt{t_0}-\sqrt{t})\right)\subset
B_{g(t)}\left(x_0,r_0-(1+L)\be \sqrt{c_0 t}\right).$$
If we further constrain $t\leq \al^2 t_0$, then this smaller ball contains 
$B_{g(t)}(z_0,L(1-\al)\be\sqrt{c_0 t_0})$ as required.
\end{proof}

One key application of the lemma above is the following, in which $z_0$ is further constrained to be a first point where good curvature decay fails.

\begin{lemma}
\label{decay_or_earlier_control}
Given an integer $n\geq 2$, take $\be\geq 1$ as in Lemma \ref{shrinking_balls_lemma}.
Suppose that $(M^n,g(t))$ is a Ricci flow for $t\in [0,T)$,
$x_0\in M$, $r_0>0$, $c_0>0$ and $L>0$. 
Suppose further that $B_{g(0)}(x_0,r_0)\subset\subset M$.
Then at least one of the following assertions is true:
\begin{compactenum}
\item For each $t\in (0,T)$ with $t<\frac{r_0^2}{\be^2c_0(L+1)^2}$, we have 
$B_{g(t)}(x_0,r_0-(L+1)\be\sqrt{c_0 t})\subset B_{g(0)}(x_0,r_0)$ and 
$$|\Rm|_{g(t)}<\frac{c_0}{t}\quad\text{ throughout }
B_{g(t)}(x_0,r_0-(L+1)\be\sqrt{c_0 t}).$$
\item 
There exist $t_0\in (0,T)$ with $t_0<\frac{r_0^2}{\be^2c_0(L+1)^2}$ and 
$z_0\in \overline{B_{g(t_0)}(x_0,r_0-(L+1)\be\sqrt{c_0 t_0})}$ such that 
$$Q:=|\Rm|(z_0,t_0)=\frac{c_0}{t_0},$$
and for all $\al\in (0,1)$ and  $t\in (0,\al^2 t_0]$, we have
$$B_{g(t)}(z_0,L(1-\al)\be\sqrt{c_0 t_0})\ \subset\ 
B_{g(t)}(x_0,r_0-(L+1)\be\sqrt{c_0 t})\ \subset\  B_{g(0)}(x_0,r_0),$$
and $|\Rm|(x,t)<c_0/t$ for all
$x\in B_{g(t)}(z_0,L(1-\al)\be\sqrt{c_0 t_0})$.
\end{compactenum}
\end{lemma}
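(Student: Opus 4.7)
The plan is a standard first-failure-time argument whose output is then fed into Lemma \ref{sharks_fin_lemma}. Set $T^\ast := \min\{T,\, r_0^2/(\be^2 c_0 (L+1)^2)\}$ and let $S$ be the set of $t \in [0, T^\ast)$ such that the strict bound $|\Rm|_{g(s)}(x) < c_0/s$ holds for every $s \in (0, t]$ and every $x \in B_{g(s)}(x_0, r_0 - (L+1)\be\sqrt{c_0 s})$. Whenever $t \in S$, the Shrinking Balls Corollary \ref{shrinking_balls_cor}, applied with curvature constant $(L+1)^2 c_0$ (valid since $|\Rm|_{g(s)} < c_0/s \leq (L+1)^2 c_0/s$ on the intersection $B_{g(0)}(x_0, r_0) \cap B_{g(s)}(x_0, r_0 - (L+1)\be\sqrt{c_0 s})$), yields $B_{g(s)}(x_0, r_0 - (L+1)\be\sqrt{c_0 s}) \subset B_{g(0)}(x_0, r_0)$ for every $s \in [0, t]$. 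If $S = [0, T^\ast)$ then assertion 1 holds, so we may suppose $S$ is strictly smaller.

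Smoothness of the flow on a neighbourhood of the compact set $\overline{B_{g(0)}(x_0, r_0)}$, combined with $c_0/s \to \infty$ as $s \downarrow 0$, places a right-neighbourhood of $0$ inside $S$, so $t_0 := \sup S$ lies in $(0, T^\ast)$. For $s < t_0$ the shrinking ball is trapped inside $\overline{B_{g(0)}(x_0, r_0)}$ by the previous paragraph, and $(x, s) \mapsto |\Rm|_{g(s)}(x)$ is uniformly continuous on the resulting compact space-time set; letting $s \uparrow t_0$ I obtain $|\Rm|_{g(t_0)} \leq c_0/t_0$ throughout $\overline{B_{g(t_0)}(x_0, r_0 - (L+1)\be\sqrt{c_0 t_0})}$. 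Were this bound strict on the whole closed ball, a further continuity argument would extend $S$ past $t_0$ and contradict its definition; hence the equality $|\Rm|(z_0, t_0) = c_0/t_0$ is attained at some $z_0$ in this closure.

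With $t_0$ and $z_0$ so fixed, the hypothesis of Lemma \ref{sharks_fin_lemma} is in force --- strict bound for $t \in (0, t_0)$ by the definition of $S$, non-strict bound at $t = t_0$ from the previous step --- so that lemma directly delivers both inclusions required in assertion 2. The final strict bound $|\Rm|(x, t) < c_0/t$ on $B_{g(t)}(z_0, L(1-\al)\be\sqrt{c_0 t_0})$ for $t \in (0, \al^2 t_0]$ is then automatic: by the first inclusion from Lemma \ref{sharks_fin_lemma}, this ball is contained in $B_{g(t)}(x_0, r_0 - (L+1)\be\sqrt{c_0 t})$, and any such $t$ lies in $(0, t_0) \subset S$, on which the strict bound holds by definition.

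The main obstacle is the continuity argument of the second paragraph: one has to ensure that the curvature actually saturates the ceiling $c_0/t_0$ at a point of the \emph{closed shrinking ball} at time $t_0$, rather than blowing past it or attaining its supremum only in a limit or outside the shrunken region. The Shrinking Balls Corollary, which keeps the shrinking balls trapped inside the fixed compact set $\overline{B_{g(0)}(x_0, r_0)}$ for all $s<t_0$, is precisely what allows the uniform-continuity/limit argument to succeed and pins $z_0$ down in the correct closure.
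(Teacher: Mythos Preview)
Your proof is correct and follows essentially the same approach as the paper's own proof: assume case 1 fails, pick the first time $t_0$ and point $z_0$ at which the strict bound $|\Rm|<c_0/t$ is saturated on the closure of the shrinking ball, then invoke Lemma~\ref{sharks_fin_lemma} to obtain the inclusions of case 2, with the final strict curvature bound coming for free since $\al^2 t_0<t_0$. The paper's version is terser---it simply asserts the existence of the first failure time and point, and leaves the continuity/compactness details implicit---whereas you have spelled out carefully why the shrinking-balls trapping guarantees that the supremum is actually attained at a point of the closed shrinking ball.
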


\begin{proof}
Suppose first that for each $t\in (0,T)$ with $t<\frac{r_0^2}{\be^2c_0(L+1)^2}$, we have 
$$|\Rm|_{g(t)}<\frac{c_0}{t}\quad\text{ throughout }
B_{g(t)}(x_0,r_0-(L+1)\be\sqrt{c_0 t}).$$
Then the shrinking balls corollary \ref{shrinking_balls_cor} tells us that 
$B_{g(t)}(x_0,r_0-(L+1)\be\sqrt{c_0 t})\subset B_{g(0)}(x_0,r_0)$ and we are clearly in case 1 of the lemma.
Otherwise, there must exist a first time 
$t_0\in (0,T)$ with $t_0<\frac{r_0^2}{\be^2c_0(L+1)^2}$ and 
$z_0\in \overline{B_{g(t_0)}(x_0,r_0-(L+1)\be\sqrt{c_0 t_0})}$ such that 
$$Q:=|\Rm|(z_0,t_0)=\frac{c_0}{t_0}.$$
In this case, Lemma \ref{sharks_fin_lemma} tells us that we must be in case 2.
\end{proof}


\section{{Curvature decay under Ricci lower bounds - 
proof of Lemma \ref{Ric_lower_lemma}}}
\label{Ric_lower_lemma_proof_sect}

Arguably, the most logical order for this paper would now see us proving Lemma \ref{volume_control_lem}. However, we relegate that proof to Section \ref{vol_control_sect} for the reasons discussed in Section \ref{ingred_section}.

Before beginning the proof of Lemma \ref{Ric_lower_lemma}, we give a simple but important lemma clarifying that our Ricci flows either enjoy good curvature decay properties or they contain a point of large curvature, with no point of much larger curvature nearby.

\begin{lemma}[Curvature decay or no decay lemma]
\label{decay_or_no_decay}
Given an integer $n\geq 2$, take $\be>0$ as in Lemma \ref{shrinking_balls_lemma}.
Suppose that $(M^n,g(t))$ is a Ricci flow for $t\in [0,T]$,
$x_0\in M$, $r_0>0$ and $c_0>0$. 
Suppose further that $B_{g(t)}(x_0,r_0)\subset\subset M$ for each $t\in [0,T]$.
Then at least one of the following assertions is true:
\begin{compactenum}
\item 
\label{ass1}
For each $t\in (0,T]$ with $t<\frac{r_0^2}{\be^2c_0}$, we have 
$B_{g(t)}(x_0,r_0-\be\sqrt{c_0 t})\subset B_{g(0)}(x_0,r_0)$ and 
$$|\Rm|_{g(t)}<\frac{c_0}{t}\quad\text{ throughout }B_{g(t)}(x_0,r_0-\be\sqrt{c_0 t}).$$
\item 
\label{ass2}
There exist $\bar t\in (0,T]$ with $\bar t<\frac{r_0^2}{\be^2c_0}$ and 
$\bar x\in B_{g(\bar t)}\left(x_0,r_0-\half\be\sqrt{{\vphantom{\bar T} 
c_0 \bar t}}\right)$ such that 
$$Q:=|\Rm|(\bar x,\bar t)\geq \frac{c_0}{\bar t},$$
and 
\beq
\label{Rm_4bd}
|\Rm|(x,t)\leq 4Q=4|\Rm|(\bar x,\bar t)
\eeq
whenever 
$d_{g(\bar t)}(x,\bar x)<\frac{\be c_0}{8} Q^{-1/2}$
and $\bar t-\frac18 c_0 Q^{-1}\leq t\leq \bar t$.
\end{compactenum}
\end{lemma}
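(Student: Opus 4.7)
Assuming assertion \ref{ass1} fails, I would identify a ``first failure time'' and then perform a Perelman-style point-picking iteration. Set $T':=\min(T,r_0^2/(\be^2 c_0))$ and define
\[
t_0 := \inf\Bigl\{\,t \in (0,T']\ :\ \exists\,x\in \overline{B_{g(t)}(x_0,r_0-\be\sqrt{c_0 t})},\ |\Rm|(x,t)\geq c_0/t\,\Bigr\}.
\]
Since $g(t)$ is smooth on $M\times [0,T]$ and the balls $B_{g(t)}(x_0,r_0)$ are compactly contained in $M$, the curvature $|\Rm|_{g(t)}$ is bounded for $t$ small (while $c_0/t$ is large), forcing $t_0>0$. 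A standard compactness argument (extracting a limit of near-failure pairs $(x_k,t_k)$ with $t_k\downto t_0$) produces $x_*\in \overline{B_{g(t_0)}(x_0,r_0-\be\sqrt{c_0 t_0})}\subset B_{g(t_0)}(x_0,r_0-\tfrac12\be\sqrt{c_0 t_0})$ with $|\Rm|(x_*,t_0)\geq c_0/t_0$. For $t<t_0$, minimality precludes failure, so $|\Rm|_{g(t)}<c_0/t$ on $B_{g(t)}(x_0,r_0-\be\sqrt{c_0 t})$, and the shrinking balls corollary~\ref{shrinking_balls_cor} then places these balls inside $B_{g(0)}(x_0,r_0)$ on $[0,t_0]$.

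Next I would initialise $(\bar x^{(0)},\bar t^{(0)}):=(x_*,t_0)$ and iterate, writing $Q_k:=|\Rm|(\bar x^{(k)},\bar t^{(k)})$: if the conclusion of assertion \ref{ass2} already holds at $(\bar x^{(k)},\bar t^{(k)})$, stop; otherwise pick $(\bar x^{(k+1)},\bar t^{(k+1)})$ inside the parabolic neighborhood $\bigl\{(x,t):d_{g(\bar t^{(k)})}(x,\bar x^{(k)})<\tfrac{\be c_0}{8}Q_k^{-1/2},\ \bar t^{(k)}-\tfrac{c_0}{8Q_k}\leq t\leq \bar t^{(k)}\bigr\}$ with $Q_{k+1}>4Q_k$. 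The exponential growth $Q_k\geq 4^k Q_0$ combined with $Q_0\geq c_0/t_0$ yields the convergent geometric estimates
\[
\sum_{k\geq 0}\tfrac{c_0}{8Q_k}\leq \tfrac{c_0}{6Q_0}\leq \tfrac{t_0}{6},\qquad \sum_{k\geq 0}\tfrac{\be c_0}{8}Q_k^{-1/2}\leq \tfrac{\be}{4}\sqrt{c_0 t_0},
\]
so that $\bar t^{(k)}\in[\tfrac{5t_0}{6},t_0]$ and the cumulative spatial displacement (measured at the relevant times) is at most $\tfrac{\be}{4}\sqrt{c_0 t_0}$.

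The main obstacle is to show that the iteration terminates. If it did not, $Q_k\to \infty$ while $\bar t^{(k)}$ stays in $[\tfrac{5t_0}{6},t_0]$, and the only way this can happen for a smooth Ricci flow on $M\times [0,T]$ is if $\bar x^{(k)}$ escapes every compact subset of $M$; I therefore need the bookkeeping that $\bar x^{(k)}$ stays in a compactly contained ball. The cleanest route is an induction on $k$ establishing $\bar x^{(k)}\in B_{g(\bar t^{(k)})}(x_0,r_0-\tfrac{\be}{2}\sqrt{c_0 \bar t^{(k)}})$, comparing step-wise displacements at the slightly different times $\bar t^{(k-1)}$ and $\bar t^{(k)}$ using the shrinking balls corollary~\ref{shrinking_balls_cor} (which is applicable because below time $t_0$ the curvature bound $|\Rm|<c_0/t$ holds on the needed sub-balls by minimality of $t_0$) and summing the geometric series displayed above. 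Once $(\bar x^{(k)},\bar t^{(k)})$ is confined to this compactly contained set, continuity of $|\Rm|$ forces $Q_k$ to be bounded, contradicting $Q_k\to\infty$. The terminal $(\bar x,\bar t)$ then automatically satisfies $Q\geq Q_0\geq c_0/t_0\geq c_0/\bar t$ and lies in $B_{g(\bar t)}(x_0,r_0-\tfrac12\be\sqrt{c_0 \bar t})$ by the same induction, completing assertion \ref{ass2}.
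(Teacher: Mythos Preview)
Your overall strategy --- first-failure time, then Perelman point-picking, termination by compactness --- is the same as the paper's, but your formulation of the point-picking introduces a genuine difficulty that you have not resolved.

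The problematic step is the inductive bookkeeping ``$\bar x^{(k)}\in B_{g(\bar t^{(k)})}(x_0,r_0-\tfrac{\be}{2}\sqrt{c_0\bar t^{(k)}})$''. Your iterate $\bar x^{(k+1)}$ is chosen close to $\bar x^{(k)}$ in the metric $g(\bar t^{(k)})$, but lives at the earlier time $\bar t^{(k+1)}$; to continue the induction you must convert a bound on $d_{g(\bar t^{(k)})}(\bar x^{(k+1)},x_0)$ into one on $d_{g(\bar t^{(k+1)})}(\bar x^{(k+1)},x_0)$. You propose to do this with the shrinking balls corollary, but that corollary needs $|\Rm|_{g(t)}\leq c_0/t$ on $B_{g(t)}(x_0,r-\be\sqrt{c_0 t})$, and the only $r$ for which this is guaranteed (by minimality of $t_0$) is $r\leq r_0$. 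With $r=r_0$ the corollary only transports points lying in $B_{g(\bar t^{(k)})}(x_0,r_0-\be\sqrt{c_0\bar t^{(k)}})$, whereas your cumulative drift already places $\bar x^{(k+1)}$ at distance up to about $r_0-\tfrac{\be}{2}\sqrt{c_0\bar t^{(k)}}+\tfrac{\be c_0}{8}Q_k^{-1/2}$ from $x_0$, which lies outside that ball. Taking a larger $r$ would require curvature control on an annulus where none is available. So the time-comparison step, as written, does not go through.

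The paper avoids this entirely by changing what is being picked: it first establishes an intermediate ``Assertion~3'' in which the competing point $(x,t)$ at each stage is located by the condition $d_{g(t)}(x,x_0)<\bar r$, i.e.\ its distance from $x_0$ is measured \emph{at its own time}. The recursion then reads $d_{g(t_{k+1})}(x_{k+1},x_0)<d_{g(t_k)}(x_k,x_0)+\tfrac{\be c_0}{4}Q_k^{-1/2}$, which telescopes directly without any cross-time distance comparison. Only \emph{after} the point $(\bar x,\bar t)$ is found does the paper compare metrics at different times, and at that stage it can use the freshly established bound $|\Rm|\leq 4Q$ (with $f\equiv 2Q^{1/2}$ in Lemma~\ref{shrinking_balls_lemma}) rather than the $c_0/t$ bound. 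Reformulating your iteration in this way removes the gap.

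A minor slip: your final line asserts $Q\geq Q_0\geq c_0/t_0\geq c_0/\bar t$, but since $\bar t\leq t_0$ the last inequality goes the wrong way. The conclusion $Q\geq c_0/\bar t$ is still true (for $k\geq 1$ use $Q\geq 4Q_0\geq 4c_0/t_0>c_0/\bar t$ since $\bar t\geq 5t_0/6$; for $k=0$ it is immediate), but the stated chain is incorrect.
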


\cmt{Here we are using the convenient notation
$|\Rm|(x,t):=|\Rm|_{g(t)}(x)$ since the flow is unambiguous.}

Note that in Assertion \ref{ass2}, we are obtaining the control of \eqref{Rm_4bd} for 
$$x\in B_{g(\bar t)}(\bar x,{\textstyle \frac{\be c_0}{8}} Q^{-1/2})
\subset B_{g(\bar t)}\left(\bar x,{\textstyle\frac{\be }{8}}\sqrt{c_0 \bar t}\right)
\subset B_{g(\bar t)}(x_0,r_0).$$
This lemma has as content a point picking argument of Perelman.
It is philosophically important to appreciate that although the region in Assertion \ref{ass1} is always a subset of the ball $B_{g(0)}(x_0,r_0)$, and in Assertion \ref{ass2} the point $\bar x$ that we find is always within $B_{g(\bar t)}(x_0,r_0)$, in contrast $\bar x$  could be a vast distance from $x_0$ with respect to $g(0)$.

\cmt{example is an extremely long thin cylinder with capped ends}

\begin{proof}[Proof of Lemma \ref{decay_or_no_decay}]
By parabolically rescaling $g(t)$, we may assume that $r_0=1$.
We begin by proving an initial claim that under the conditions of the lemma,  either Assertion \ref{ass1} holds (with $r_0=1$) or the following holds (or both hold).
\begin{compactenum}
\setcounter{enumi}{2}
\item
\label{ass3}
There exist $\bar t\in (0,T]$ with $\bar t<\frac{1}{\be^2c_0}$ and 
$\bar x\in B_{g(\bar t)}\left(x_0,1-\half\be\sqrt{\vphantom{\bar T}c_0 \bar t}\right)$ such that 
$|\Rm|(\bar x,\bar t)\geq c_0/\bar t$, 
and so $\bar r:=d_{g(\bar t)}(\bar x,x_0)+\frac{\be c_0}{4}|\Rm|(\bar x,\bar t)^{-1/2}
\leq 1-\frac14 \be\sqrt{c_0 \bar t}\leq 1$,
but also so that 
for all $t\in \left(0,\vphantom{\bar T}\bar t\right]$ and $x\in M$ with $d_{g(t)}(x,x_0)<\bar r$, and $|\Rm|(x,t)\geq c_0/t$,
we have
$$|\Rm|(x,t)\leq 4|\Rm|(\bar x,\bar t).$$
\end{compactenum}
By smoothness, for sufficiently small $t>0$, we will have $B_{g(t)}(x_0,1-\be\sqrt{c_0 t})\subset\subset M$ and $|\Rm|_{g(t)}< c_0/t$ within that ball.
By the shrinking balls corollary \ref{shrinking_balls_cor}, while these facts are true, we must have
$B_{g(t)}(x_0,1-\be\sqrt{c_0 t})\subset B_{g(0)}(x_0,1)$, and in particular the only way that Assertion \ref{ass1} of the lemma can fail is if there exists some first time $t_1\in (0,T]$ with $t_1<\frac{1}{\be^2c_0}$ and a point $x_1\in \overline{B_{g(t_1)}(x_0,1-\be\sqrt{c_0 t_1})}$ such that $|\Rm|(x_1,t_1)=c_0/{t_1}$.

If $t_1$ and $x_1$ serve as the $\bar t$ and $\bar x$ that we seek for Assertion \ref{ass3} to hold, then we have proved our initial claim. If not, then there must exist some $t_2\in (0,t_1]$ and $x_2\in M$ with 
\beqa
d_{g(t_2)}(x_2,x_0)&<d_{g(t_1)}(x_1,x_0)+\frac{\be c_0}{4}|\Rm|(x_1,t_1)^{-1/2}\\
&\leq (1-\be\sqrt{c_0 t_1})+\frac14\be\sqrt{c_0 t_1}\\
&=1-(1-\frac14)\be\sqrt{c_0 t_1}\\
&\leq 1-\frac34\be\sqrt{c_0 t_2}
\eeqa
such that $|\Rm|(x_2,t_2)\geq c_0/t_2$ and 
$|\Rm|(x_2,t_2)>4|\Rm|(x_1,t_1)$. Again, if $t_2$ and $x_2$ serve as the $\bar t$ and $\bar x$ that we seek, then our initial claim is proved. If not, we pick $t_3\in (0,t_2]$ and $x_3\in M$ with
\beqa
d_{g(t_3)}(x_3,x_0)&<d_{g(t_2)}(x_2,x_0)+\frac{\be c_0}{4}|\Rm|(x_2,t_2)^{-1/2}\\
&\leq 
d_{g(t_1)}(x_1,x_0)+\frac{\be c_0}{4}|\Rm|(x_1,t_1)^{-1/2}
+\half\frac{\be c_0}{4}|\Rm|(x_1,t_1)^{-1/2}\\
&\leq 1-(1-\frac14-\frac18)\be\sqrt{c_0 t_1}\\
&\leq 1-\frac58\be\sqrt{c_0 t_3}
\eeqa
such that $|\Rm|(x_3,t_3)\geq c_0/t_3$
and $|\Rm|(x_3,t_3)>4|\Rm|(x_2,t_2)$.
After $k-1$ iterations of this procedure, we have $t_k\in (0,T]$ with $t_k<\frac{1}{\be^2c_0}$, and $x_k\in M$ with
\beqa
d_{g(t_k)}(x_k,x_0)
&<
1- \left(1- \frac14 -\frac18 - \frac1{16} \cdots -\frac1{2^{k}}\right) \be \sqrt{c_0t_1}\\
&=1-(\half+2^{-k})\be\sqrt{c_0 t_1}\\
&\leq 1-\half\be\sqrt{c_0 t_k}
\eeqa
such that $|\Rm|(x_k,t_k)\geq c_0/t_k$ and 
$|\Rm|(x_k,t_k)>4^{k-1}|\Rm|(x_1,t_1)$.
Since the curvature is blowing up under this iteration, but the curvature is bounded uniformly over $t\in [0,T]$ and $B_{g(t)}(x_0,1)$, the iteration must eventually terminate, and our initial claim follows.

\cmt{Another way of finding $\bar x$ and $\bar t$ is to maximise the quantity
$$F(x,t):=\psi_t(d_{g(t)}(x,x_0))^2|\Rm|(x,t)$$
over all $t\in (0,t_1]$ and $x\in B_{g(t)}(x_0,1-\half\be\sqrt{c_0 t})$
such that $|\Rm|(x,t)\geq c_0/t$, where $\psi_t(r)$ is the function that is $1$ for 
$0\leq r\leq 1-\be\sqrt{c_0 t}$, zero for $r\geq  1-\half\be\sqrt{c_0 t}$,
and linear in between.}

To prove the lemma, it suffices to show that when Assertion \ref{ass3} holds, then Assertion \ref{ass2} must hold using the same $\bar x$ and $\bar t$ (and with $r_0=1$).

We claim that for $\bar t-\frac18 c_0 Q^{-1}\leq t\leq \bar t$, 
and $x\in M$ with 
$d_{g(t)}(x,x_0)<\bar r$, 
we have \eqref{Rm_4bd}.
For such a value of $t$, because $Q=|\Rm|(\bar x,\bar t)\geq c_0/\bar t$,
we deduce that $\frac78\bar t\leq t$.
Thus if the claim were not true for some $x$, $t$, 
then we would have
$$|\Rm|(x,t)>4Q\geq \frac{4c_0}{\bar t}\geq \frac{7c_0}{2t}\geq \frac{c_0}{t},$$
and so by Assertion \ref{ass3}, we deduce that \eqref{Rm_4bd} must hold after all, giving a contradiction.

It remains to show that the ball $B_{g(\bar t)}(\bar x,\frac{\be c_0}{8} Q^{-1/2})$
considered in Assertion \ref{ass2} lies within the ball $B_{g(t)}(x_0,\bar r)$ where we have just established the curvature bound \eqref{Rm_4bd}.
Using the curvature bound, we can apply Lemma \ref{shrinking_balls_lemma} with constant $f=2Q^{1/2}$ and $r=\bar r$ to find that 
$$B_{g(t)}(x_0,\bar r)\supset B_{g(\bar t)}(x_0,\bar r-\be Q^{1/2}(\bar t-t))
.$$
But by the constraint on $t$, we have
$$\bar r-\be Q^{1/2}(\bar t-t)\geq \bar r-\frac{1}{8}\be c_0 Q^{-1/2}
= 
d_{g(\bar t)}(\bar x,x_0)+\frac{\be c_0}{8}Q^{-1/2},$$
and so 
$$B_{g(t)}(x_0,\bar r)\supset B_{g(\bar t)}(x_0,d_{g(\bar t)}(\bar x,x_0)+\frac{\be c_0}{8}Q^{-1/2})\supset B_{g(\bar t)}(\bar x, \frac{\be c_0}{8}Q^{-1/2}),$$
as required.
\end{proof}

We now turn to the proof of Lemma \ref{Ric_lower_lemma}.
A global version of this result can be found in  
\cite[Lemma 4.3]{MilesCrelle3D}.

\begin{proof}[Proof of Lemma \ref{Ric_lower_lemma}]
By Bishop-Gromov, $\VolB_{g(0)}(x_0,\ga)$ has a positive lower bound depending only on
$v_0$, $K$ and $\ga$.
Applying Lemma \ref{volume_control_lem} to $g(t)$, we see that there exists $\ep_0>0$ depending only on $v_0$, $K$ and $\ga$ such that 
for each $c_0<\infty$, there exists $\hat T$ depending on $v_0$, $K$, $\ga$ and $c_0$ such that
prior to time $\hat T$ and while $|\Rm|_{g(t)}\leq c_0/t$ still holds on $B_{g(t)}(x_0,\ga)$, we have a lower volume bound
\beq
\label{extra_volume_hyp}
\VolB_{g(t)}(x_0,1)\geq \ep_0. 
\eeq

From this we deduce that it suffices to prove the lemma with the additional hypothesis that 
\eqref{extra_volume_hyp} holds for each $t\in [0,T)$.
In particular, we can ignore the second conclusion \eqref{second_conclusion}.

Let us assume that the lemma is false, even with the extra hypothesis \eqref{extra_volume_hyp},
for some $v_0,K>0$ and $\ga\in (0,1)$. 
%
Then for any sequence $c_n\to\infty$, we can find Ricci flows that fail the lemma (despite the extra hypothesis \eqref{extra_volume_hyp}) with $C_0=c_n$ in an arbitrarily short time, and in particular within a time $t_n$ that is sufficiently small so that $c_nt_n\to 0$ as $n\to\infty$.
By reducing $t_n$ to the first time at which the desired conclusion fails, we have
a sequence of three-dimensional Ricci flows $(M_n,\ti g_n(t))$
for $t\in [0,t_n]$ 
with $t_n\downto 0$, and even $c_n t_n\to 0$, and a sequence of points $x_n\in M_n$
with $B_{\ti g_n(t)}(x_n,1)\subset\subset M_n$ for each $t\in [0,t_n]$, 
such that 
\beq
\label{volume_hyp3}
\VolB_{\ti g_n(0)}(x_n,1)\geq v_0,
\eeq
\beq
\label{extra_volume_hyp2}
\VolB_{\ti g_n(t)}(x_n,1)\geq \ep_0
\qquad\text{for all }t\in [0,t_n],
\eeq
\beq
\label{Ric_hyp3}
\Ric_{\ti g_n(t)}\geq -K\qquad\text{on }B_{\ti g_n(t)}(x_n,1)\text{ for all }
t\in [0,t_n],
\eeq
and
\beq
\label{curv_hyp}
|\Rm|_{\ti g_n(t)}<\frac{c_n}{t}\quad\text{ on }B_{\ti g_n(t)}(x_n,\ga)\text{ for }
t\in (0,t_n),
\eeq
but so that
\beq
\label{going_to_infinity}
|\Rm|_{\ti g_n(t_n)}=\frac{c_n}{t_n}\quad\text{ at some point in }
\overline{B_{\ti g_n(t_n)}(x_n,\ga)}.
\eeq
As a consequence, if we apply Lemma \ref{decay_or_no_decay} to each $\ti g_n(t)$,
with $r_0=(1+\ga)/2$ and $c_0=c_n$
(after deleting a finite number of the initial terms so that $c_nt_n$ is small enough)
we find that Assertion \ref{ass1} cannot hold, and thus Assertion \ref{ass2} must hold for each, giving times $\bar t_n\in (0,t_n]$ and points
$\bar x_n\in B_{\ti g_n(\bar t_n)}(x_n,r_0-\half\be\sqrt{\vphantom{\bar T}c_n \bar t_n})$ such that 
\beq
\label{Rm_4bdstar}
|\Rm|_{\ti g_n(t)}(x)\leq 4|\Rm|_{\ti g_n(\bar t_n)}(\bar x_n)
\eeq
whenever 
$d_{\ti g_n(\bar t_n)}(x,\bar x_n)<\frac{\be c_n}{8} Q_n^{-1/2}$
and $\bar t_n-\frac18 c_n Q_n^{-1}\leq t\leq \bar t_n$,
where $Q_n:=|\Rm|_{\ti g_n(\bar t_n)}(\bar x_n)\geq c_n/\bar t_n\to \infty$.

\cmt{now have to use this notation rather than $|\Rm|(x,t)$ type notation in order to specify the metric.}

Conditions \eqref{extra_volume_hyp2} and \eqref{Ric_hyp3}, together with Bishop-Gromov, imply that we have uniform volume  
ratio control
\beq
\label{vol_rat}
\frac{\VolB_{\ti g_n(\bar t_n)}(\bar x_n,r)}{r^3}\geq \eta>0
\eeq
for all $0<r<(1-\ga)/2$, 
where $\eta$ depends on $\ep_0$, $K$ and $\ga$.
\cmt{This part is why we need $r_0$ controllably less than $1$.}

We then perform a parabolic rescaling to give new Ricci flows defined by
$$g_n(t):=Q_n\ti g_n(\tfrac{t}{Q_n}+\bar t_n),$$
for $t\in [-\frac18 c_n,0]$.
The scaling factor is chosen so that 
and
\beq
|\Rm|_{g_n(0)}(\bar x_n)= 1
\eeq
but by \eqref{Rm_4bdstar} the curvature of $g_n(t)$ is uniformly bounded 
for $t\in  [-\frac18 c_n,0]$ and $x\in B_{g_n(0)}(\bar x_n, \frac18 \be c_n)$.
The volume ratio estimate \eqref{vol_rat} transforms to 
\beq
\label{vol_rat2}
\frac{\VolB_{g_n(0)}(\bar x_n,r)}{r^3}\geq \eta>0
\eeq
for all $0<r<\frac{1-\ga}2 Q_n^{1/2}\to\infty$.

With this control we can apply Hamilton's compactness theorem to give convergence
$(M_n,g_n(t),\bar x_n)\to (N,g(t),x_\infty)$, for some complete bounded-curvature Ricci flow $(N,g(t))$, for $t\in (-\infty,0]$, and $x_\infty\in N$.
By Hamilton's refinement of Hamilton-Ivey pinching, in particular its application given by Chow and Knopf \cite[Corollary 9.8]{ChowKnopf}, we know that 
every three-dimensional complete ancient Ricci flow of bounded curvature, and in particular $g(t)$, must in fact have \emph{nonnegative} sectional curvature.

\cmt{Interesting that this is the point that the argument would fail with Eguchi-Hanson in 4D}

Moreover, \eqref{vol_rat2} passes to the limit to force $g(t)$ to have positive asymptotic volume ratio.
This in turn tells us that $g(t)$ is a $\kappa$-solution in the sense of Perelman \cite[\S 11]{P1}. But Perelman's theorem \cite[\S 11.4]{P1} then tells us that the asymptotic volume ratio at each time must be zero, which is a contradiction.
\end{proof}

\section{Pseudolocality improvement lemma}
\label{pseudo_improve_sect}

In this section we develop Lemma \ref{Ric_lower_lemma} using Perelman's pseudolocality lemma. In order to apply that result, we need to assume that we are working on a complete bounded-curvature Ricci flow. 

The idea is that whereas before we assumed that the Ricci curvature was bounded below for all times, now we only assume such a lower bound for some initial time period.
This improvement will ultimately help us to localise our results. A previous instance where pseudolocality was used in a localisation argument is \cite[Theorems 1.1 and 1.5]{MilesJGA}
although the argument in the current paper is considerably shorter.  
R. Hochard used a related method to prove  \cite[Theorem 2.4]{hochard}.

\begin{lemma}
\label{pseudo_improve_lemma3}
Given $v_0>0$ and $K>0$, there exist $\hat T\in (0,1]$, 
$C_0\in [1,\infty)$ and $\al\in (0,1)$, such that the following is true.
Suppose that $(M^3,g(t))$ is a complete bounded-curvature Ricci flow for $t\in [0,T]$, with $0<T\leq \hat T$.
If, for some $x_0\in M$, we have
\beq 
\label{volume_hyp2new2}
\VolB_{g(0)}(x_0,1)\geq v_0>0
\eeq
and 
\beq
\label{Ric_hyp2new}
\Ric_{g(t)}\geq -K<0\qquad\text{on }B_{g(t)}(x_0,1)\text{ for all }
t\in [0, \al^2 T],
\eeq
then for all $t\in (0, T]$ we have
\beq
\label{pseudo_imp_conseq2}
|\Rm|_{g(t)}(x_0)\leq \frac{C_0}{t}. 
\eeq
\end{lemma}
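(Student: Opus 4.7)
The plan is to split the time range $(0,T]$ into two sub-intervals at $\al^2 T$ and handle each by a different technique: Lemma \ref{Ric_lower_lemma} supplies the required curvature decay for early times, while the Tian--Wang pseudolocality Theorem \ref{perelman10.5}, applied to a parabolic rescaling of $g(t)$, handles the later times.

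For $t\in (0,\al^2 T]$ the Ricci lower bound holds throughout, so restricting the flow to $[0,\al^2 T]$ and applying Lemma \ref{Ric_lower_lemma} (with, say, $\ga=1/2$) supplies constants $C_1$, $\hat T_1$, $\eta_0$ depending only on $v_0$ and $K$; provided $\hat T\leq \hat T_1$ we obtain $|\Rm|_{g(t)}\leq C_1/t$ on $B_{g(t)}(x_0,1/2)$ for $t\in (0,\al^2 T]$, together with a uniform volume lower bound $\VolB_{g(t)}(x_0,1)\geq \eta_0$. Taking $C_0\geq C_1$ already disposes of this regime.

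For $t\in (\al^2 T, T]$ I would apply pseudolocality at the intermediate time $t_0:=\al^2 T/2$. At $t_0$ the previous step yields $|\Rm|_{g(t_0)}\leq 2C_1/(\al^2 T)$ on $B_{g(t_0)}(x_0,1/2)$, and the lemma's hypothesis \eqref{Ric_hyp2new} supplies $\Ric_{g(t_0)}\geq -K$ on $B_{g(t_0)}(x_0,1)$. Parabolically rescale by setting $\ti g(s):=r^{-2}g(t_0+r^2 s)$ for a small scale $r>0$. Then $\Ric_{\ti g(0)}\geq -Kr^2$, while Bishop--Gromov together with the smooth volume expansion $\VolB(r)\geq \om_3 r^3\bigl(1-Cr^2\sup_{B_r}|\Rm|\bigr)$ for $r$ below the curvature scale gives $\VolB_{\ti g(0)}(x_0,1)\geq \om_3\bigl(1-Cr^2\cdot 2C_1/(\al^2 T)\bigr)$. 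Fixing $\al_1:=C_0/2$ and letting $\ep_1:=\ep(3,\al_1)>0$ be the constant supplied by Theorem \ref{perelman10.5}, the requirements $r^2\leq \ep_1/K$ and $r^2\leq c\,\ep_1\al^2 T/C_1$ (for an absolute $c$) yield simultaneously $\Ric_{\ti g(0)}\geq -\ep_1$ and $\VolB_{\ti g(0)}(x_0,1)\geq (1-\ep_1)\om_3$, so Theorem \ref{perelman10.5} delivers $|\Rm|_{\ti g(s)}(x_0)\leq \al_1/s$ for $s\in (0,\ep_1)$. Unwinding the rescaling gives $|\Rm|_{g(t)}(x_0)\leq \al_1/(t-t_0)$ for $t\in (t_0, t_0+r^2\ep_1]$, and since $t\geq \al^2 T=2t_0$ forces $t-t_0\geq t/2$, we obtain $|\Rm|_{g(t)}(x_0)\leq 2\al_1/t=C_0/t$ on this regime, provided $r^2\ep_1\geq T-t_0$ so that the pseudolocality window reaches past $T$.

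The three constraints on $r^2$ are mutually compatible precisely when $T$ is small and $\al$ is suitably chosen: the lower bound $r^2\geq (T-t_0)/\ep_1$ combined with $r^2\leq \ep_1/K$ forces $T\leq \ep_1^2/K$ (absorbed into $\hat T$), while combining it with $r^2\leq c\,\ep_1\al^2 T/C_1$ forces $\al^2\geq C_1/(c\,\ep_1^2)$ up to a bounded factor (absorbed into $\al$). The latter is compatible with $\al<1$ provided $\ep_1$ is large enough relative to $C_1$, which I would arrange by choosing $C_0$ (hence $\al_1=C_0/2$, hence $\ep_1=\ep(3,\al_1)$) sufficiently large at the outset, exploiting the monotonicity of $\ep(3,\cdot)$ in its second argument. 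The main obstacle is exactly this interlocking choice of constants: the rescaled ball must simultaneously be small enough to satisfy the near-Euclidean hypotheses of Tian--Wang and large enough that its pseudolocality time window covers the gap $[t_0,T]$; the freedom to enlarge the output constant $C_0$ is what resolves this tension.
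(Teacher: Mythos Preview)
Your overall strategy---use Lemma \ref{Ric_lower_lemma} for $t\in(0,\al^2 T]$ and pseudolocality for $t\in(\al^2 T,T]$---matches the paper. The flaw is in the second step.

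By taking $t_0=\al^2 T/2$, the gap you must cover satisfies $T-t_0=T(1-\al^2/2)\geq T/2$ and does \emph{not} shrink as $\al\to 1$. Your compatibility analysis then requires $\ep_1^2\gtrsim C_1$, and you propose to achieve this by choosing $C_0$ (hence $\al_1=C_0/2$) large, ``exploiting the monotonicity of $\ep(3,\cdot)$''. But $\ep(n,\al)$ in Theorem \ref{perelman10.5} cannot be taken large: once $\ep\geq 1$ the volume hypothesis $\VolB\geq(1-\ep)\om_n$ is vacuous, and with only a weak Ricci lower bound and no noncollapsing the conclusion is plainly false. Tian--Wang pseudolocality is a perturbative statement; its $\ep$ is intrinsically small. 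So the interlocking of constants does not close.

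The paper avoids this by (i) starting pseudolocality at $t_0=\al^2 T$ rather than $\al^2 T/2$, so the gap to fill is $(1-\al^2)T$, which \emph{does} shrink as $\al\to 1$; and (ii) invoking Theorem \ref{perelman11.3} rather than Theorem \ref{perelman10.5}. Theorem \ref{perelman11.3} needs only \emph{some} volume lower bound $\ti v_0 r_0^n$ (available from Bishop--Gromov and the $\eta_0$ you already obtained) and, crucially, outputs a \emph{time-independent} curvature bound $(\ep r_0)^{-2}$ over a window of length $(\ep r_0)^2$, so there is no blow-up at $t_0$ to worry about. With $r_0=\tfrac12\sqrt{T/C_0}$ the window length is $\ep^2 T/(4C_0)$, and one simply sets $\al^2=1-\ep^2/(4C_0)\in(0,1)$. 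Your argument can be repaired by swapping in Theorem \ref{perelman11.3} and moving $t_0$ to $\al^2 T$; as written, the step that makes $\ep_1$ large is not available.
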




We will need a slight extension of Perelman's second pseudolocality result:

\begin{thm}[{cf. Perelman \cite[\S 11.3]{P1}}]
\label{perelman11.3}
Given $n\in \N$ and $\ti v_0>0$, there exists $\ep>0$ such that if $r_0>0$ and $(M^n,g(t))$ is a complete bounded-curvature Ricci flow for $t\in [0,T]$, $0<T\leq (\ep r_0)^2$, with the properties that $|\Rm|_{g(0)}\leq r_0^{-2}$
on $B_{g(0)}(x_0, r_0)$ and $\VolB_{g(0)}(x_0,r_0)\geq \ti v_0r_0^n$, for some $x_0\in M$, then $|\Rm|_{g(t)}\leq (\ep r_0)^{-2}$ for any $t\in [0,T]$, throughout
$B_{g(t)}(x_0,\ep r_0)$.
\end{thm}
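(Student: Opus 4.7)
The plan is to reduce Theorem \ref{perelman11.3} to Perelman's original pseudolocality (which requires almost-Euclidean initial geometry) by zooming into a smaller ball at which the combined hypotheses automatically yield almost-Euclidean volume and negligibly negative Ricci curvature.

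By parabolic rescaling we may assume $r_0 = 1$, so $|\Rm|_{g(0)} \le 1$ on $B_{g(0)}(x_0, 1)$ and $\VolB_{g(0)}(x_0, 1) \ge \ti v_0$. Applying the Cheeger-Gromov-Taylor injectivity-radius estimate to this ball, the two-sided curvature bound together with the volume lower bound produce a uniform injectivity radius $\inj_{g(0)}(x_0) \ge \iota_0 > 0$ with $\iota_0 = \iota_0(\ti v_0, n)$. Inside normal coordinates at $x_0$ of radius $\iota_0$, the bound on $|\Rm|$ forces the volume form to satisfy $\sqrt{\det g} = 1 - \tfrac{1}{6}\Ric_{ij}(x_0)x^i x^j + O(|x|^3)$, so $\VolB_{g(0)}(x_0, r) = \om_n r^n(1 + O(r^2))$ as $r \downto 0$. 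Hence for any preassigned $\mu > 0$ there exists $\rho = \rho(\ti v_0, n, \mu) \in (0, \min(\iota_0, 1)]$ with $\VolB_{g(0)}(x_0, \rho) \ge (1-\mu)\om_n \rho^n$.

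Now consider the parabolically rescaled flow $\ti g(\ti t) := \rho^{-2} g(\rho^2 \ti t)$. At time zero it satisfies $|\Rm|_{\ti g(0)} \le \rho^2$ and $\Ric_{\ti g(0)} \ge -(n-1)\rho^2$ on $B_{\ti g(0)}(x_0, 1) = B_{g(0)}(x_0, \rho)$, together with $\VolB_{\ti g(0)}(x_0, 1) \ge (1-\mu)\om_n$. Choosing $\mu$ and $\rho$ sufficiently small depending only on $n$, these hypotheses fall within the regime of Perelman's original pseudolocality in its ball-control form \cite[\S 10.1]{P1}, which produces a dimensional $\ep_0 = \ep_0(n) > 0$ and the bound $|\Rm|_{\ti g(\ti t)} \le \ep_0^{-2}$ throughout $B_{\ti g(\ti t)}(x_0, \ep_0)$ for all $\ti t \in [0, \ep_0^2]$. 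Setting $\ep := \ep_0 \rho$ and scaling back yields the claimed estimate on the original flow.

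The main obstacle I expect is bridging from the almost-Euclidean \emph{volume} hypothesis available here to the almost-Euclidean \emph{isoperimetric} hypothesis used in Perelman's ball-control statement in \cite[\S 10.1]{P1}: Theorem \ref{perelman10.5} as recorded in this paper gives only center control and is not a priori sufficient. There are two natural routes: invoke Cheeger-Colding volume-convergence stability to promote almost-Euclidean volume plus near-zero lower Ricci bound to an almost-Euclidean isoperimetric inequality, allowing the direct use of \cite[\S 10.1]{P1}; or, alternatively, apply the center-control pseudolocality of Theorem \ref{perelman10.5} at every point of the slightly smaller ball, verifying that the volume and Ricci hypotheses transfer to each such point using the injectivity-radius and Bishop-Gromov machinery set up above, and then covering the ball $B_{\ti g(\ti t)}(x_0, \ep_0)$ with the resulting center estimates.
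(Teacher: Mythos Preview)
Your approach is essentially the same as the paper's: scale to $r_0=1$, use the curvature bound plus volume lower bound to get a uniform injectivity radius at $x_0$, then observe that on a sufficiently small ball (depending only on $n$ and $\ti v_0$) the volume ratio becomes almost-Euclidean, and apply Perelman's original statement at that scale. The paper phrases the volume step via G\"unther's theorem (the lower volume comparison under an upper sectional curvature bound) rather than the Taylor expansion of $\sqrt{\det g}$, but these are the same idea.

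Your concern in the final paragraph is largely self-inflicted. You aim at \cite[\S 10.1]{P1}, which is the isoperimetric version, and then worry about bridging almost-Euclidean volume to an almost-Euclidean isoperimetric inequality. The paper sidesteps this entirely: the theorem is stated as an extension of Perelman's own \cite[\S 11.3]{P1}, which already has the almost-Euclidean \emph{volume} hypothesis and gives ball control as its conclusion. So once you have $\VolB_{g(0)}(x_0,\rho)\ge (1-\mu)\om_n\rho^n$ and $|\Rm|_{g(0)}\le \rho^{-2}$ on $B_{g(0)}(x_0,\rho)$, you can invoke Perelman's \S 11.3 directly on the rescaled flow and be done; no Cheeger--Colding machinery or point-by-point application of Theorem~\ref{perelman10.5} is needed.
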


\begin{proof}[Proof of Theorem \ref{perelman11.3}]
By scaling, we may assume that $r_0=1$.
The only difference to what was stated by Perelman is that he assumed that $\VolB_{g(0)}(x_0,1)$ was almost the volume of the unit ball in Euclidean space. To reduce to that case, note that our weaker volume hypothesis, coupled with the curvature hypothesis, implies a positive lower bound on the injectivity radius $\inj_{g(0)}(x_0)$ depending only on $n$ and $v_0$. 
Gunther's theorem \cite[\S 3.101]{GHL} then tells us that the volume ratio
$\VolB_{g(0)}(x_0,r)/r^n$ is large enough to invoke Perelman's version for $r>0$ sufficiently small depending on $n$ and $v_0$, giving a curvature bound at later times on balls of an even smaller radius.
\end{proof}

\begin{proof}[{Proof of Lemma \ref{pseudo_improve_lemma3}}]
For the $v_0$ and $K$ as in the lemma, we can pick $\hat T$ to be the constant
$\hat T(v_0,K,1/2)$ given by Lemma \ref{Ric_lower_lemma}, or take $\hat T=1$, whichever is the smaller. For each $\al\in (0,1)$, that lemma then applies to our Ricci flow $g(t)$ 
to give that
for each $t\in (0,\al^2 T]$ we have $|\Rm|_{g(t)}<C_0/t$ on $B_{g(t)}(x_0,1/2)$
and $\VolB_{g(t)}(x_0,1) \geq \eta_0>0$, with $C_0$ and $\eta_0$ depending only on
$v_0$ and $K$.
Without loss of generality, we may assume that $C_0\geq 1$.

Thus we have established the desired conclusion for 
$t\in (0,\al^2 T]$. 
We now need to show that we can establish a curvature bound  at $x_0$ for the remaining time 
$t\in [\al^2 T, T]$, 
provided we choose 
$\al\in (1/2,1)$ large enough.

Defining $r_0:=\half\sqrt{\frac{T}{C_0}}\leq \half$,
we have shown that 
$$|\Rm|_{g(\al^2 T)}<\frac{C_0}{\al^2 T}\leq \frac{4C_0}{T}=r_0^{-2}
\qquad\text{ on }B_{g(\al^2 T)}(x_0,1/2)\supset 
B_{g(\al^2 T)}(x_0,r_0).$$
Bishop-Gromov will give us the volume bound $\VolB_{g(\al^2 T)}(x_0,r_0)\geq \ti v_0 r_0^3$ for some $\ti v_0>0$ depending only on 
$\eta_0$ and $K$, and hence only on $v_0$ and $K$.
We can then apply the pseudolocality theorem \ref{perelman11.3}, starting at time $\al^2 T$, to deduce a curvature bound 
$$|\Rm|_{g(t)}\leq (\ep r_0)^{-2}=\frac{4C_0}{\ep^2 T}
\quad\text{ for } 
t\in [\al^2 T,\al^2 T+(\ep r_0)^2] \cap [0,T] \quad\text{ over }
B_{g(t)}(x_0,\ep r_0),$$
where $\ep\in (0,1)$ depends only on $\ti v_0$, i.e. only on $v_0$ and $K$.

Thus 
we have established the desired conclusion provided we pick $\al\in (1/2,1)$ large enough so that $\al^2 T+(\ep r_0)^2\geq  T$, e.g. we can take $\al^2  =  1- \ep^2/(4C_0)$,
albeit with \eqref{pseudo_imp_conseq2} holding for $C_0$ replaced with $C_0'$ sufficiently large so that $C_0'/t$ dominates
$\frac{4C_0}{\ep^2 T}$ over this time interval
$[\al^2 T,\al^2 T+(\ep r_0)^2] \cap [0,T]$ 
(e.g. we can choose $C_0'=4C_0\ep^{-2}$).
\end{proof}


\newcommand{\CO}{c_0}
\newcommand{\ellH}{H}


\section{Perelman cut-off functions}
\label{cutoff_sect}

In this section we explain how it is possible to construct useful cut-off
functions that are sub-solutions to the heat equation on manifolds
evolving by Ricci flow using the estimates and methods of Perelman \cite{P1}.

\cmt{We're working on a time interval $[0,T)$, so  the Ricci flow could be singular as $t\upto T$, but this doesn't seem to cause any problems}

\begin{lemma}{\label{cutoff}}
Let $\CO >0$, $\ep \in (0,1)$, $0<r_1 <r_2 $ and $n \in \N$ be arbitrary. 
Suppose that $(M^n,g(t))$   is
a smooth Ricci flow for $t\in [0,T)$, and $x_0\in M$
satisfies $B_{g(t)}(x_0,r_2 )\subset\subset M$ for all $t\in [0,T)$.
We assume further  that
$$\Ric_{g(t)}\leq \frac{\CO (n-1)}{t}\qquad\text{on }B_{g(t)}(x_0,\sqrt{t}),$$
%
%
for all $t\in (0,T)$.
Then there exist positive constants $\hat T = \hat
T(\CO ,r_1 ,r_2 ,n)>0$, $k= k(r_1 ,r_2 ,\ep)>0$, $V=V(r_1 ,r_2 ,\ep)>0$,  and a locally Lipschitz continuous function 
$h: M \times [0,\hat T) \cap [0,T) \to \R$, such that
\begin{itemize}
\item[(i)]  $h(\cdot,t) = e^{-kt} $ on $B_{g(t)}(x_0,r_1 )$ and $h(\cdot,t) =0 $ outside  $B_{g(t)}(x_0,r_2 )$ for all $t\in [0,\hat T) \cap [0,T)$, and $h(x,t) \in [0,e^{-kt}]$ for all $(x,t) \in M  \times [0,\hat T) \cap [0,T)$
\item[(ii)] $\partt h \leq \lap_{g} h $  and $|\grad h| \leq Vh^{1-\ep}$
 {\it in the following barrier sense}. For any  $(x,t) \in M
  \times (0,\hat T) \cap (0,T) $ we can find a
space-time neighbourhood $\curlO\subset M\times (0,\hat T) \cap (0,T)$ of $(x,t)$ and a
 {\it smooth} function $\ellH :\curlO \to [0,\infty)$ such that $\ellH  \leq h $ on
 $\curlO$, $\ellH (x,t) = h(x,t)$ and 
$\partt \ellH (x,t)  \leq \lap_{g(t)} \ellH  (x,t)$ and $|\grad \ellH |(x,t) \leq V|\ellH (x,t)|^{1-\ep}$.
\end{itemize}
\end{lemma}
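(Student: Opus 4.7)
The plan is to build $h$ in the explicit form
\[
h(x,t) := e^{-kt}\,\phi\bigl(d_{g(t)}(x,x_0) + C\sqrt{t}\bigr)
\]
for suitable constants $C,k>0$ and a smooth nonincreasing single-variable cut-off profile $\phi:\R\to[0,1]$. The shift $+C\sqrt{t}$ is Perelman's standard device for absorbing the $1/\sqrt{t}$ blow-up in the distance-distortion inequality, while the factor $e^{-kt}$ absorbs a bounded defect coming from the interior smoothing region of $\phi$. The analysis will reduce the sub-heat inequality $\partt h\leq\lap h$ to a single ODE inequality for $\phi$.

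The key analytic input is Perelman's barrier version of Hamilton's distance-distortion estimate (\cite[Lemma 8.3(b)]{P1}), applied with $r_0=\sqrt{t}$ and $K=\CO/t$: the hypothesis $\Ric_{g(t)}\leq \CO(n-1)/t$ on $B_{g(t)}(x_0,\sqrt{t})$ produces, in the barrier sense and at any space-time point $(x,t)$ with $d_{g(t)}(x,x_0)\geq 2\sqrt{t}$, a smooth upper barrier $\tilde d\geq d_{g(\cdot)}(\cdot,x_0)$ on a space-time neighbourhood $\curlO$ of $(x,t)$, with equality there and satisfying $(\partt-\lap)\tilde d \geq -C_1/\sqrt{t}$ at $(x,t)$, where $C_1:=(n-1)(\tfrac{2}{3}\CO+1)$. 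Setting $C:=2C_1$, the shifted function $\tilde\psi:=\tilde d+C\sqrt{t}$ is a smooth upper barrier for $\psi:=d_{g(\cdot)}(\cdot,x_0)+C\sqrt{t}$ satisfying $(\partt-\lap)\tilde\psi\geq 0$ at the base point and $|\grad\tilde\psi|=1$ there.

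Next I construct $\phi$. Fix an intermediate radius $\tilde r_1:=(r_1+r_2)/2$ and take $\phi$ smooth, nonincreasing, identically $1$ on $(-\infty,\tilde r_1]$, identically $0$ on $[r_2,\infty)$, and agreeing on most of $[\tilde r_1,r_2]$ with a rescaled power $\bigl((r_2-s)/(r_2-\tilde r_1)\bigr)^{1/\ep}$. A direct calculation shows that this power profile satisfies $|\phi'|\leq V_0\phi^{1-\ep}$ and $\phi''\geq 0$ there, with $V_0=V_0(r_1,r_2,\ep)$. On the thin smoothing zone near $\tilde r_1$ the function $\phi$ stays bounded below by a positive constant while $|\phi''|$ is bounded, so choosing $k=k(r_1,r_2,\ep)$ large enough forces the ODE inequality $k\phi+\phi''\geq 0$ globally. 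Finally, take $\hat T=\hat T(\CO,r_1,r_2,n)>0$ small enough that $C\sqrt{\hat T}\leq (r_2-r_1)/2$ and $2\sqrt{\hat T}\leq \tilde r_1-C\sqrt{\hat T}$, which guarantees $h\equiv e^{-kt}$ on $B_{g(t)}(x_0,r_1)$, $h\equiv 0$ outside $B_{g(t)}(x_0,r_2)$, and also ensures that Perelman's estimate is applicable throughout the transition annulus where $\phi(\psi)\in(0,1)$.

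To verify the barrier statement at a transition-region point $(x,t)$, I define $\ellH(y,\tau):=e^{-k\tau}\phi(\tilde d(y,\tau)+C\sqrt{\tau})$ on $\curlO$. Since $\phi$ is nonincreasing and $\tilde d\geq d$, we obtain $\ellH\leq h$ on $\curlO$ with equality at $(x,t)$, and $\ellH$ is smooth and nonnegative. Using $\phi'\leq 0$, $(\partt-\lap)\tilde\psi\geq 0$, and $|\grad\tilde\psi|(x,t)=1$, a short calculation yields
\[
\partt\ellH(x,t)-\lap\ellH(x,t) \leq -e^{-kt}\bigl[k\phi(\tilde\psi)+\phi''(\tilde\psi)\bigr] \leq 0,
\]
and the gradient bound $|\grad\ellH|(x,t)\leq e^{-kt}V_0\phi(\tilde\psi)^{1-\ep}\leq V\ellH(x,t)^{1-\ep}$ follows with $V:=V_0$. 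For $(x,t)$ lying strictly inside $B_{g(t)}(x_0,r_1)$ or strictly outside $B_{g(t)}(x_0,r_2)$ the function $h$ is locally $e^{-kt}$ or $0$ and one uses the trivial barriers $\ellH\equiv e^{-k\tau}$ and $\ellH\equiv 0$, respectively. The main obstacle I anticipate is the lack of smoothness of the distance function, which is entirely handled by Perelman's Calabi-style upper barrier; the rest of the proof is the explicit construction of $\phi$ and a bookkeeping of how small $\hat T$ must be chosen.
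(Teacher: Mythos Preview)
Your proposal is correct and follows essentially the same route as the paper: define $h(x,t)=e^{-kt}\eta\bigl(d_{g(t)}(x,x_0)+C\sqrt t\bigr)$ for a suitable one-variable profile $\eta$, invoke Perelman's barrier inequality $(\partt-\lap)d\geq -m_0/\sqrt t$ (this is \cite[Lemma~8.3(a)]{P1}, not 8.3(b)) outside $B_{g(t)}(x_0,\sqrt t)$, and check the sub-heat and gradient inequalities for the resulting smooth lower barrier $H$.

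The only substantive difference is in the construction of the profile. You build $\phi$ directly as (a smoothing of) the power $((r_2-s)/(r_2-\tilde r_1))^{1/\ep}$; the paper instead takes an \emph{arbitrary} smooth cut-off $\phi$ satisfying the elementary bounds $\phi''\geq -a\phi$ and $|\phi'|^2\leq b$, and then sets $\eta=\phi^{1/\ep}$. The point is that the two required inequalities $|\eta'|\leq V\eta^{1-\ep}$ and $\eta''\geq -k\eta$ then follow by a one-line computation, with no smoothing to manage. Your approach needs the profile to be $C^\infty$, but the pure power is only $C^{\lfloor 1/\ep\rfloor}$ at $s=r_2$; you treat the smoothing near $\tilde r_1$ but say nothing about the endpoint $r_2$, where $\phi\to 0$ and the inequality $\phi''\geq -k\phi$ is delicate. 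This is not a fatal gap --- composing with a smooth cut-off and then raising to the power $1/\ep$ fixes it, and that is exactly the paper's device --- but it is worth tightening.

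Two further minor points: Perelman's barrier gives only $|\nabla\tilde d|\leq 1$ at the touching point (not equality), and that is all your computation needs, since when $\phi''<0$ one uses $-\phi''|\nabla\tilde\psi|^2\leq -\phi''$. Also, your case split at the end (``strictly inside $B_{g(t)}(x_0,r_1)$'' versus ``strictly outside $B_{g(t)}(x_0,r_2)$'' versus transition region) leaves out the easy cases where $d\in[r_1,\tilde r_1-C\sqrt t]$ or $d\in[r_2-C\sqrt t,r_2]$; there $h$ is locally $e^{-kt}$ or $0$ for the same reason and the trivial barriers still apply.
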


\cmt{new subtle change above - we restricted $H$ to be nonnegative. This is nice e.g. at the end of the next section, when estimating}

\brmk
An inspection of the proof shows that in fact we can take $V=\frac{\al}{\ep (r_2-r_1)}$,
$k=\frac{\al}{\ep (r_2-r_1)^2}$,
and
$\hat T=\min\{r_1^2,\al\frac{(r_2 -r_1 )^2 }{ (\CO+1)^2n^2}\}$
for some universal $\al>0$.
\ermk

\cmt{as a result, I ditched the $n$ and $\CO$ dependencies of $k$. Also propagates into next couple of sections}

\brmk
\label{k_clarification}
Although we state the lemma for a given $k$, it will then also hold if we increase $k$ to some other $\hat k\geq k$, simply by replacing 
$h(x,t)$ by $e^{(k-\hat k)t}h(x,t)$.
\ermk

\begin{proof}
The construction is similar to the one given in Perelman \cite{P1}. The definition of $h$ will follow \cite{MilesGandT}.
Let  $\phi:\R \to [0,1]$ be a smooth function
with the following properties.
\begin{compactenum}[(i)]
\item $ \phi(r) = 1$ for all $r \leq r_1 $, 
$ \phi(r) = 0$  for all $r \geq r_2  $,
\item $ \phi$ is decreasing: $ \phi' \leq 0$,
\item $ \phi'' \geq -a \phi$,
\item $| \phi'|^2 \leq b$
\end{compactenum}
where $a$ and $b$  can each be taken as positive universal constants multiplied by $(r_2-r_1)^{-2}$.
To find such a $\phi$, first construct a function $\ti \phi$ having the
properties (i)-(iv)  for
$r_1 =1,$ $r_2 =2$ and for some universal constants $a,b>0$, and then define
 $\phi(x) = \ti \phi( \frac{x + r_2  - 2r_1 }{r_2 -r_1 }  )$.
%

For $0<r_1 <r_2 <1$, set $\eta_{r_1 ,r_2 } = \eta = \phi^p$, with $p:=1/\ep$.
Then we have:
$|\eta'|^2= |p\phi^{p-1} \phi'|^2 = p^2 \phi^{2p-2}|\phi'|^2 \leq
bp^2(\phi^{p})^{\frac{2(p-1)}{p}} = V^2 |\eta|^{2(1-\ep)}
$ with $V^2 = bp^2$,
and hence 
\beq
\label{etaprime}
|\eta'| \leq V |\eta|^{1-\ep}.
\eeq
Furthermore, 
$\eta''(x)= p(p-1)\phi^{p-2} (\phi')^2 + p\phi^{p-1}\phi'' \geq
-ap\phi^{p-1}\phi = -ap\phi^{p} = -ap\eta$, and so with $k = ap$, we have
\beq
\label{etaprimeprime}
\eta''\geq  -k\eta,
\eeq
Let $t \in (0,T)\cap (0,r_1 ^2)$.
Then we have $\Ric\leq (n-1)K$ on $B_{g(t)}(x_0,r_0)$,
where $K:=\frac{\CO }{t}$ and $r_0 = \sqrt{t}\leq r_1$. 
Using  \cite[Lemma 8.3(a)]{P1}, for
$x \in  B_{g(t)}(x_0,r_2 )\setminus B_{g(t)}(x_0,r_0)$ we have
\beqa
\left(\partt - \lap\right) d(x,t) & \geq -(n-1)(\frac 2 3 Kr_0 + r_0^{-1}) \\
& = -(n-1)( \frac 2 3  \frac{\CO }{t}\sqrt{t}+ \frac{1}{ \sqrt{t}} )\\
& = -\frac{m_0}{\sqrt{t}}
\eeqa
where $d(x,t)$ is shorthand for $d_{g(t)}(x,x_0)$, and 
$m_0 = (n-1)(\frac{2}{3}  \CO   + 1)$.
This differential inequality is to be understood in the following barrier sense: Each $(x,t)$ as considered has some space-time neighbourhood $\curlO\subset M\times (0,T)$ on which there exists a smooth function
$D:\curlO\to [0,\infty)$ with the properties that
(i) $d(y,s)\leq D(y,s)$ for all $(y,s)\in \curlO$, 
(ii) $d(x,t)= D(x,t)$, and
(iii) $\left(\partt - \lap\right) D(x,t) \geq -\frac{m_0}{\sqrt{t}}$.
(Perelman proves this by constructing a family of piecewise
smooth paths from $x_0$ to points $y$ near $x$, whose lengths with respect to any smooth metric represent a smooth  function of $y$. He then sets $D(y,s)\geq d(y,s)$ to be the length of the path to $y$ with respect to $g(s)$ and shows that it satisfies the required differential inequality.) Note that because $d$ is a Lipschitz function with $|\grad d|\leq 1$ where it is differentiable, we see
that $|\grad D|(x,t)\leq 1$ automatically.

Choose $\ti r_1  = \frac{r_1   + r_2 }{2}$ between $r_1 $ and $r_2 $, and let $\eta=\eta_{\ti r_1 ,r_2 }$ be the cut-off function for
$0<\ti r_1 < r_2  $ as described above, and
define $h:M\times [0,\hat T)\cap [0,T)\to [0,1]$ by
$$h(x,t) = e^{-kt} \eta (d(x,t) +  4m_0 \sqrt{t}),$$ 
where $\hat T:=\min\{r_1^2,\frac{(r_2 -r_1 )^2 }{ (8m_0)^2}\}$.
As demanded by the lemma, if $d(x,t)\geq r_2$ then
$h(x,t)=0$. Moreover, 
if $d(x,t)< r_1$ (note $t< \hat T$) then $d(x,t) +  4m_0 \sqrt{t}<r_1+(r_2 -r_1)/2=\ti r_1 $, 
and we must have $h(x,t)=e^{-kt}$.
Clearly $h(x,t)\in [0,e^{-kt}]$ always.
This settles part (i) of the lemma.

To make sense of the differential inequalities of the lemma, we need to define a barrier function $H$ near an arbitrary 
$(x,t)\in M\times (0,\hat T)\cap (0,T)$.
For the given $t$, if $x$ lies outside $B_{g(t)}(x_0,r_2)$, then we have already seen that $h(x,t)=0$ and we can take the barrier $H\equiv 0$ on an arbitrary neighbourhood of $(x,t)$.
If $x$ lies within $B_{g(t)}(x_0,r_0)$, then we have already seen that $h(x,t)=e^{-kt}$, and revisiting that argument we see that if fact $h(y,s)=e^{-ks}$ for $(y,s)$ in a neighbourhood of $(x,t)$. Therefore in this case we can set $H(y,s)=e^{-ks}$ in that neighbourhood.

In the remaining case that $x \in  B_{g(t)}(x_0,r_2 )\setminus B_{g(t)}(x_0,r_0)$, 
we define
$$H(y,s) = e^{-ks} \eta (D(y,s) +  4m_0 \sqrt{s}),$$ 
for $(y,s)$ in the neighbourhood $\curlO$ of $(x,t)$ where $D$ is defined as before,
truncated if necessary so that $\curlO$ lies within the domain of definition
of $h$.
Clearly $H(x,t)=h(x,t)$, and because $\eta$ is a decreasing function, we have $H(y,s)\leq h(y,s)$ for all $(y,s)\in \curlO$.
To prove that $|\grad h| \leq Vh^{1-\ep}$ in the claimed barrier sense, we must merely prove the same inequality for $H$ at $(x,t)$, so we compute, using \eqref{etaprime},
$|\grad H(x,t)|\leq e^{-kt}|\eta '(D(x,t) +4m_0\sqrt{t})|
\leq V e^{-kt}|\eta (D(x,t) +4m_0\sqrt{t})|^{1-\ep}  \leq
V |H(x,t)|^{1-\ep}e^{-\ep kt}\leq 
V |H(x,t)|^{1-\ep}$, as desired.
Furthermore, using $\eta' \leq 0$ and 
\eqref{etaprimeprime}, we see that 
\beqa
(\partt - \lap) H (x,t)& = e^{-kt}\eta'( D(x,t) + 4m_0\sqrt{t})  \cdot (\partt - \lap ) D(x,t) \\
&\qquad +  e^{-kt}\eta'( D(x,t) + 4m_0\sqrt{t})  \cdot \frac{2m_0}{\sqrt t}\\
&\qquad -e^{-kt} \eta''(D(x,t) + 4m_0\sqrt{t})|\grad D|^2(x,t) \\
& \qquad -ke^{-kt}\eta(D(x,t) +4m_0\sqrt{t}) 
\\
& \leq  -e^{-kt}\eta'( D(x,t) + 4m_0\sqrt{t}) \cdot  \frac{m_0}{\sqrt
  t}+ e^{-kt} \cdot  \eta'( D(x,t) + 4m_0\sqrt{t})\cdot\frac{2m_0}{\sqrt t} \\
  & \qquad + ke^{-kt} \eta(D(x,t) +4m_0\sqrt{t})
-ke^{-kt}\eta(D(x,t) + 4m_0\sqrt{t})  \\
& \leq e^{-kt} \eta'( D(x,t) + 4m_0\sqrt{t})\cdot\frac{m_0}{\sqrt t}   \\
&\leq 0,
\eeqa
which completes the proof.
\end{proof}


\section{Local lower scalar curvature bounds}
\label{scalar_sect}

In this section we prove local preservation of lower bounds for the scalar curvature, in preparation for a similar result for Ricci curvature. Our lemma should be compared with earlier
estimates of B.-L. Chen \cite[Proposition 2.1]{strong_uniqueness}, and the techniques in \cite{MilesGandT}.

\cmt{we can't say that Theorem 4.1 of \cite{MilesGandT} is similar! That result is completely different. It's the techniques of the proof where there is some overlap.}

\begin{lemma}{\label{Scalarestimate}}
\label{DBscalar}
Let $\CO , K >0$, $\ga \in (0,1)$  and $n \in \N$ be arbitrary. 
Suppose that $(M^n,g(t))$ is a Ricci flow for $t\in [0,T)$, and $x_0\in M$
satisfies $B_{g(t)}(x_0,1)\subset\subset M$ for all $t\in [0,T)$.
We assume further  that 
\begin{compactenum}[(a)]
\item $\Sc_{g(0)} \geq -K \qquad\text{on }B_{g(0)}(x_0,1)$
\item 
$\Ric_{g(t)}\leq \frac{\CO (n-1)}{t}\qquad\text{on }B_{g(t)}(x_0,\sqrt{t})$
for all $t\in (0,T)$.
\end{compactenum}
Then there exist $\hat T = \hat T(\CO ,K,\ga,n)>0$
and $\si=\si(K,\ga)>0$
such that 
$$\Sc_{g(t)} \geq - Ke^{\si t}\geq -2K \qquad\text{on }B_{g(t)}(x_0,1-\ga)$$
for all $t\in [0,T)\intersect [0,\hat T)$.
\end{lemma}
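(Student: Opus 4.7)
I would prove Lemma~\ref{DBscalar} via a maximum principle applied to the auxiliary function
\[
\Phi(x,t) := (R(x,t) + L(t))\,h(x,t),
\]
where $L(t) := (K + \delta) e^{\sigma t}$ for a small perturbation parameter $\delta > 0$ to be sent to zero, $\sigma = \sigma(K,\gamma)>0$ is a fixed constant (any positive value works, e.g.\ $\sigma = 1$), and $h$ is the Perelman cutoff from Lemma~\ref{cutoff} with $r_1 = 1-\gamma$, $r_2\in(1-\gamma,1)$ fixed so that $\operatorname{supp} h(\cdot,0) \subset B_{g(0)}(x_0,1)$, and any $\ep\in(0,1)$. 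Since $h(\cdot,t) = e^{-kt}>0$ on $B_{g(t)}(x_0,1-\gamma)$, nonnegativity of $\Phi$ there is equivalent to $R\geq -L(t)$, which after sending $\delta\downto 0$ gives $R \geq -Ke^{\sigma t}$; taking $\hat T \leq \log 2/\sigma$ then produces the bound $-Ke^{\sigma t}\geq -2K$.

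To verify the set-up: on $\operatorname{supp} h(\cdot,0) \subset B_{g(0)}(x_0,1)$, the hypothesis $R\geq -K$ yields $R+L(0)\geq \delta>0$, whence $\Phi(\cdot,0)\geq \delta h\geq 0$ with strict positivity wherever $h>0$. Off $\operatorname{supp} h$ we have $\Phi\equiv 0$ trivially. I would then argue by contradiction: if the conclusion fails, then by continuity there is a first time $t_0\in(0,\hat T)$ and a point $y_0$ with $h(y_0,t_0)>0$ and $\Phi(y_0,t_0)=0$, with $\Phi\geq 0$ for all earlier $t\leq t_0$. Since $h(y_0,t_0)>0$, necessarily $R(y_0,t_0)=-L(t_0)<0$. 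Using the smooth barrier $H\leq h$ from Lemma~\ref{cutoff} with $H(y_0,t_0)=h(y_0,t_0)$, define $\tilde\Phi:=(R+L(t))H$. In a neighbourhood of $(y_0,t_0)$ restricted to $t\leq t_0$, the nonnegativity of $\Phi$ combined with $h>0$ (by continuity) forces $R+L\geq 0$, and since $0\leq H\leq h$ we get $0\leq \tilde\Phi\leq \Phi$ with equality at $(y_0,t_0)$. Hence $\tilde\Phi$ has a one-sided local minimum at that point, yielding the classical inequalities $\partial_t\tilde\Phi\leq 0$, $\nabla\tilde\Phi=0$, $\Delta\tilde\Phi\geq 0$.

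The heart of the proof is the cancellation that the factor $R+L$ vanishes at $(y_0,t_0)$. Every derivative of $\tilde\Phi=(R+L)H$ at that point loses the terms containing $(R+L)$ itself: $\nabla\tilde\Phi=(\nabla R)H$ forces $\nabla R=0$; then $\Delta\tilde\Phi=(\Delta R)H$ forces $\Delta R\geq 0$; and $\partial_t\tilde\Phi=(R_t+L'(t_0))H$ forces $R_t\leq -L'(t_0)=-\sigma(K+\delta)e^{\sigma t_0}<0$. But the evolution equation $\partial_t R=\Delta R+2|\Ric|^2$ combined with $\Delta R\geq 0$ and $|\Ric|^2\geq 0$ gives $R_t\geq 0$ at $(y_0,t_0)$, the desired contradiction. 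Sending $\delta\downto 0$ and choosing $\hat T=\hat T(c_0,K,\gamma,n)>0$ small enough for the cutoff construction of Lemma~\ref{cutoff} to be valid and for $Ke^{\sigma t}\leq 2K$ completes the proof.

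The main obstacle is selecting the correct form of $\Phi$: placing $R+L(t)$ as an \emph{additive} term weighted by a single copy of $h$ is precisely what causes every $\nabla h$ cross-term in the derivatives of $\tilde\Phi$ to pick up a factor of $R+L$ and so vanish at the contradiction point. With a more naive ansatz such as $\Phi=Rh+Ke^{\sigma t}$, those cross-terms survive and yield a troublesome $h^{-2\ep}$ singularity via the Kato-type estimate $|\nabla h|^2\leq V^2 h^{2-2\ep}$, which is hard to handle near $\partial\operatorname{supp} h$. With the chosen ansatz the barrier $H$ is needed only to justify the classical maximum principle calculus, not to control any bad gradient term.
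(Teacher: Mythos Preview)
There is a genuine gap in your localisation argument. You assert that ``there is a first time $t_0\in(0,\hat T)$ and a point $y_0$ with $h(y_0,t_0)>0$ and $\Phi(y_0,t_0)=0$'', but this is not justified by your ansatz. Since $\Phi=(R+L)h$ vanishes identically on $\{h=0\}$, nothing forces the first contact to occur in the interior of $\operatorname{supp}h$. Concretely: for $t<t_0$ you only know $R+L>0$ on the \emph{open} set $\{h(\cdot,t)>0\}$; a limit of points $(y_k,t_k)$ with $t_k\downarrow t_0$, $h(y_k,t_k)>0$ and $R(y_k,t_k)+L(t_k)\leq 0$ produces $y_0\in\overline{\{h(\cdot,t_0)>0\}}$, and nothing prevents $y_0\in\partial\{h(\cdot,t_0)>0\}$ where $h(y_0,t_0)=0$. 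At such a boundary point the barrier $H$ also vanishes, and your key deductions $\nabla R=0$, $\Delta R\geq 0$, $\partial_t R\leq -L'(t_0)$ no longer follow from the vanishing of $\tilde\Phi$. Since the scalar curvature is completely uncontrolled near $\partial B_{g(t)}(x_0,r_2)$ for $t>0$, this boundary scenario cannot be excluded. (A symptom that something is missing: your argument would allow any $\sigma>0$, independent of $K$ and $\gamma$, whereas the actual bound does depend on these.)

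The ansatz you dismiss as ``naive'', namely $f=h\Sc+(1+\delta)Ke^{bt}$, is exactly what the paper uses, and the point of \emph{not} multiplying the constant by $h$ is precisely to force $f>0$ wherever $h=0$, so that any zero of $f$ must lie where $h>0$. The resulting cross-term is not fatal: with $\ep=\tfrac14$ one has $|\nabla H|^2\leq V^2H^{3/2}$, so the bad term becomes $-2V^2\Sc\,H^{1/2}$, and this is absorbed by the good quadratic term $\tfrac{2}{n}H\Sc^2\leq 2H|\Ric|^2$ from the evolution equation via Young's inequality, provided one takes $b=3V^4/K$. So the gradient term is handled at the cost of a specific $\sigma=b+k$, while the additive constant is what makes the localisation work.
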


\brmk
In fact, by incorporating the shrinking balls corollary \ref{shrinking_balls_cor}, we need only assume that the unit ball centred at $x_0$ is compactly contained in $M$ at time $0$, not at all times, although we will not require this fact.
\ermk

\cmt{I think we can choose $\si$ to be a universal constant times
$\frac{1}{K\ga^4}+\frac{1}{\ga^2}$, though I didn't check too carefully.}

\brmk
A slight modification of the proof would allow us to replace the conclusion of the lemma with the stronger assertion that $\Sc_{g(t)}\geq -K$ on $B_{g(t)}(x_0,1-\ga)$, provided we impose a lower bound on $K$, depending on $\ga$. We do not use this fact.
\ermk

\cmt{This remark needs checking, but here is the rough idea: we exploit the quadratic term in the evolution equation for $\Sc$.
Roughly we can allow $b$ to be negative, indeed to be $-k$.
Then at $(p_0,t_0)$ we get $(-\Sc)\leq He^{kt_0}(-\Sc)\leq (1+\de)K$.
Then in the last line of the proof, we use Young's inequality slightly differently to get 
\beqa0&\geq \frac23H\Sc^2 + 2V^2\Sc H^{1/2}+ bK\\
&\geq \frac13H\Sc^2-3 V^4 -k K\\
&\geq \frac13 e^{-kt_0} (1+\de)^2K^2-3 V^4 -k K\\
&>0,
\eeqa
if $K$ is large enough so the first term dominates.
}

\begin{proof}
Let $h$ be a cut-off function of the type defined in Lemma \ref{cutoff}, with $\ep =\frac{1}{4}$, $r_1 = (1-\ga)$
and $r_2  = 1$. 
In particular, there exist  $k=k(\ga)>0$, $\hat T=\hat T(\CO ,n,\ga)>0$ and  $V=V(\ga)>0$
so that $h$ is defined on $M\times [0,T)\cap [0,\hat T)$,
$h(x,t)\in [0,e^{-kt}]$ throughout,
and for all $t \in [0,\hat T)\cap[0,T)$, we have
$h(\cdot,t) = e^{-kt}$ on
$B_{g(t)}(x_0,1-\ga)$ and $h(\cdot,t) = 0$ outside $B_{g(t)}(x_0,1)$.
Moreover,  
we have
$( \partt -\lap_{g} ) h \leq 0$ and $|\grad h|^2 \leq V^2 h^{3/2}$ in the barrier sense as in that lemma.

For fixed, arbitrary $\de>0$,
define the function $f: M\times [0,T)\cap [0,\hat T)\to\R$ by
$$f(x,t)=h(x,t)\Sc(x,t)+(1+\de) Ke^{bt},$$
where $b:=3V^4/K$.
Clearly we have $f(\cdot,0)>0$ throughout $M$, and
$f(x,t)>0$ for each $t\in [0,T)\cap [0,\hat T)$ and $x$ outside
$B_{g(t)}(x_0,1)$.

{\bf Claim:} We have $f>0$ throughout $M\times [0,T)\cap [0,\hat T)$.

Assuming the claim for a moment, for each $t\in [0,T)\cap [0,\hat T)$ and throughout  $B_{g(t)}(x_0,1-\ga)$,
we have 
$e^{-kt}\Sc +(1+\de) Ke^{bt}>0$, i.e. 
$\Sc>-(1+\de) Ke^{(b+k)t}$,
and because $\de>0$ was arbitrary, this implies
$$\Sc\geq - Ke^{(b+k)t}.$$
By reducing $\hat T>0$ if necessary so that $e^{(b+k)\hat T}\leq 2$, this implies the lemma.

\emph{Proof of claim:}
Suppose contrary to the claim that there exists some first time $t_0\in (0,T)\cap (0,\hat T)$ at which there exists a point $p_0\in M$
where $f(p_0,t_0)=0$.
It is clear that $\Sc(p_0,t_0)<0$ and $h(p_0,t_0)>0$.
By Lemma \ref{cutoff}, there is a neighbourhood $\curlO$ of 
$(p_0,t_0)$ where a nonnegative barrier function $H\leq h$ is defined, 
with $H(p_0,t_0)=h(p_0,t_0)$, and both
$( \partt -\lap_{g} ) H \leq 0$ and $|\grad H|^2 \leq V^2 H^{3/2}$ at $(p_0,t_0)$.
By restricting $\curlO$ further, we may assume that $\Sc<0$
throughout $\curlO$.
Therefore, if we define
$$F(x,t)=H(x,t)\Sc(x,t)+(1+\de) Ke^{bt},$$
within $\curlO$, then we can be sure that $F\geq f\geq 0$ in 
$\curlO\cap (M\times [0,t_0])$,
with $F(p_0,t_0)=f(p_0,t_0)=0$. From this we can deduce that
at $(p_0,t_0)$ we have three facts. First,
\beq
\label{fact1}
0=\grad F=\grad(H\Sc)=H\grad \Sc + \Sc\grad H.
\eeq
Second, also using \eqref{fact1},
\beqa
\label{fact2}
0 \leq \lap F=\lap(H\Sc)
&=H\lap\Sc+\Sc\lap H+2g(\grad H,\grad\Sc)\\
&=H\lap\Sc+\Sc\lap H-2\frac{\Sc}{H}|\grad H|^2\\
&\leq H\lap\Sc+\Sc\lap H-2V^2\Sc H^{1/2}.
\eeqa
Third, 
\beqa
\label{fact3}
0\geq \pl{F}{t}&=\pl{}{t}(H\Sc)+b(1+\de) Ke^{bt_0}\\
&\geq H\pl{\Sc}{t}+\Sc\pl{H}{t} + bK.
\eeqa
Keeping in mind
the evolution equation
$$\pl{\Sc}{t}=\lap \Sc+2|\Ric|^2\geq \lap \Sc+ \frac{2}{3}\Sc^2,$$
(see e.g. \cite[Proposition 2.5.4 and Corollary 2.5.5]{RFnotes}) 
and the definition $b:=3V^4/K$, these three facts tell us that
\beqa
0&\geq H(\lap \Sc + \frac23\Sc^2) + \Sc\lap H + bK\\
&\geq \frac23H\Sc^2 + 2V^2\Sc H^{1/2}+ bK\\
&\geq -\frac32 V^4+ bK=\frac32 V^4\\
&>0,
\eeqa
which is a contradiction.
\end{proof}

\cmt{used nonnegativity of $H$ in last computation, although actually only at $(p_0,t_0)$, so it was not really necessary to go back and ask that $H\geq 0$ in the last section.}


\section{Local lower Ricci curvature bounds: The double bootstrap argument}
\label{db_sect}

In this section we show that local Ricci lower bounds persist for a short time, in the presence of $c_0/t$ upper Ricci control.

The argument is unorthodox in that we have to make two iterations in our estimates to control Ricci from below, initially only gaining a weaker $L^p$ control before obtaining effectively $L^\infty$ control on the second attempt.

\cmt{This time we need decay of the full curvature tensor, not just Ricci, because we need to apply Shi's estimates at some point}

\begin{lemma}[Local lower Ricci bounds]
\label{DB}
Let $\CO\geq 1$, $K >0$ and $\ga \in (0,1)$ be arbitrary. 
Suppose that $(M^3,g(t))$ is a Ricci flow for $t\in [0,T)$, and $x_0\in M$
satisfies $B_{g(t)}(x_0,1)\subset\subset M$ for all $t\in [0,T)$.
We assume further  that
\begin{compactenum}[(a)]
\item 
\label{hyp_a}
$\Ric_{g(0)}\geq -K \qquad\text{on }B_{g(0)}(x_0,1)$
\item
\label{hyp_b} 
$|\Rm|_{g(t)}\leq \frac{\CO }{t}\qquad\text{on }B_{g(t)}(x_0,1)$
for all $t\in (0,T)$.
\end{compactenum}
Then there exists a $\hat T = \hat T(\CO ,K,\ga)>0$ 
such that 
$$\Ric_{g(t)}\geq -100K\CO\qquad\text{on }B_{g(t)}(x_0,1-\ga)$$
for all $t\in [0,T)\intersect [0,\hat T)$.
\end{lemma}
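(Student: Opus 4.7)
The plan is to mimic the strategy of the scalar curvature Lemma \ref{DBscalar}, adapting it to Ricci, but running a two-step iteration because the evolution of $\Ric$ lacks the manifestly positive reaction term $\tfrac{2}{3}\Sc^2$ that closed the scalar proof in a single step.

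First I would verify that Lemma \ref{DBscalar} applies: the hypothesis $|\Rm|_{g(t)}\leq c_0/t$ gives the upper Ricci bound required as hypothesis (b) there, while $\Ric_{g(0)}\geq -K$ implies $\Sc_{g(0)}\geq -3K$. That lemma then yields a scalar lower bound $\Sc_{g(t)}\geq -6K$ on a slightly smaller ball, say $B_{g(t)}(x_0,1-\tfrac{\ga}{3})$, for $t$ in an interval depending only on $c_0$, $K$, $\ga$. This scalar bound will be the crucial first-order input into the Ricci estimate, since in 3D the smallest and largest Ricci eigenvalues are linked through $\Sc = \la_1+\la_2+\la_3$.

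Next I would construct a Perelman cut-off $h$ from Lemma \ref{cutoff} with $r_1=1-\ga$ and $r_2=1-\tfrac{\ga}{2}$, and try to run a maximum-principle argument analogous to the scalar case on a quantity of the shape $F := h\,\la_1(\Ric) + A e^{bt}$, where $\la_1(\Ric)$ is the smallest Ricci eigenvalue, interpreted in the barrier sense, and $A\sim Kc_0$. In dimension three, $(\partt-\lap)\la_1$ has a reaction quadratic in $\Rm$, and using the a priori bound $|\Rm|\leq c_0/t$ one finds contributions of size $c_0^2/t^2$ whose sign is uncontrolled. This singularity is too strong to be absorbed by the exponential $e^{-kt}$ factor coming from $h$, so a direct single-step argument of the scalar type fails.

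This is where the double bootstrap enters. In the first iteration, I would use the scalar bound $\Sc\geq -6K$ together with $\la_3(\Ric)\leq |\Ric|\leq 3c_0/t$ and the cut-off machinery to derive a weaker, time-integrable estimate on the negative part of $\la_1$ on a slightly larger sub-ball, roughly of the form
$$\int_0^t \sup_{B_{g(s)}(x_0,1-2\ga/3)} (-\la_1)_+(s)\,ds \leq C(c_0,K,\ga).$$
In the second iteration, this integrable control is fed back into the cut-off-weighted maximum-principle argument for $F$: the bad quadratic term $\la_2\la_3$ that obstructed the first pass is now integrable in time, and the argument closes to give the pointwise bound $\Ric\geq -100Kc_0$ on $B_{g(t)}(x_0,1-\ga)$ for $t<\hat T$.

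The main obstacle is the first bootstrap: extracting \emph{any} $L^1$-in-time estimate from the singular $1/t$ upper bound, using only the scalar lower bound and the heat-subsolution and gradient properties $\partt h\leq \lap h$, $|\grad h|\leq V h^{1-\ep}$ of the cut-off, requires a delicate interplay between the cut-off gradient loss and the cancellation coming from $\Sc \geq -6K$. Getting the output constant to be linear (not quadratic) in $Kc_0$ is a second subtle point, and relies on working with $\la_1$ itself rather than with $|\Ric|^2$.
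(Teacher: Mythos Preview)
Your high-level architecture is right: first invoke Lemma \ref{DBscalar} to get $\Sc_{g(t)}\geq -6K$ on a slightly smaller ball, then run a two-pass maximum-principle argument on a cut-off-weighted Ricci quantity. But the test function you propose, $F = h\la_1(\Ric) + Ae^{bt}$, is missing the key device that makes the paper's proof close, and the nature of the intermediate estimate you aim for is not the one that actually works.

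The paper's test quantity is the $(0,2)$-tensor
\[
F = h\,\Ric + \bigl[LK\,\Sc\cdot t^{\al} + t^{\ep} + 7K\bigr]g,
\]
with $L=8$, $\ep=\tfrac{1}{10}$, and $\al\in[\tfrac12,1]$ chosen differently in each step. The point of the $LK\,\Sc\cdot t^{\al}$ term is that when you differentiate $F$ in time you pick up $2LKt^{\al}|\Ric|^2$ from the evolution of $\Sc$, and this nonnegative quadratic term is exactly what replaces the $\tfrac{2}{3}\Sc^2$ that closed the scalar argument. Without coupling $\Sc$ into $F$ this way, the reaction term $P^{1}_{\ 1}=(\mu-\nu)^2+\la(\mu+\nu)$ from the Ricci evolution has nothing to absorb the bad $\la$ contributions, and your sketch correctly identifies that a naive $h\la_1 + Ae^{bt}$ fails. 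The fix is not an $L^1$-in-time estimate; it is this algebraic coupling.

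Two further ingredients your outline omits: first, the cross term $g(\grad h,\grad\Sc)$ that arises from the $\Sc\cdot t^{\al}$ coupling must be controlled, and this uses Shi's local derivative estimates to bound $|\grad\Sc|\leq C t^{-3/2}$, together with a lower bound $h\geq Ct$ at the first failure point (which follows from $|\Rm|\leq c_0/t$). Second, the output of Step One is a \emph{pointwise} bound $\Ric_{g(t)}\geq -t^{-3\ep}$ on $B_{g(t)}(x_0,1-\tfrac{\ga}{2})$ (taking $\al=1-2\ep$), not an integral bound. This mild blow-up is then fed into Step Two (with $\al=1$) where the worst term becomes $(2V^2+8K)\la\geq -Ct^{-3\ep}$, which is now dominated by $\ep t^{\ep-1}$ for small $t$, closing the argument with the constant $100Kc_0$.
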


In a different direction, it was proved in \cite{ChenXuZhang} that nonnegative Ricci curvature is preserved  for unbounded curvature complete smooth Ricci flows in three dimensions.


\newcommand{\rc}{\mathrm{Rc}}
\newcommand{\Lval}{L}
\newcommand{\twoval}{7}


\begin{proof}
By Hypothesis \eqref{hyp_a}, we know that $\Sc_{g(0)}\geq -3K$
on $B_{g(0)}(x_0,1)$, and so the 
scalar curvature estimate of Lemma \ref{Scalarestimate}
tells us that for $t\in [0,T)$ with
$t< \hat T$, for some $\hat T=\hat T(c_0,K,\ga)\in (0,1]$, 
we have
\beq
\label{Sc_bound}
\Sc_{g(t)}\geq -6K\qquad\text{ on }B_{g(t)}(x_0,1-\frac{\ga}{4}).
\eeq
On the other hand, Hypothesis \eqref{hyp_b} of the lemma implies that
\beq
\label{scalar_upper}
|\Sc_{g(t)}|\leq \frac{6\CO }{t}\qquad\text{on }B_{g(t)}(x_0,1)
\qquad\text{ for all }t\in (0,T).
\eeq
For $0<r_1 <r_2 \leq 1$, and with $\ep:=\frac{1}{10}$ fixed for the duration of the proof, we can reduce $\hat T>0$ and 
let $h: M \times [0,\hat T)\times [0,T) \to [0,1]$
be the corresponding Perelman cut-off function from Lemma \ref{cutoff}.
Note that $\hat T$ then depends on $r_1$ and $r_2$, but in practice, these values will be specific functions of $\ga$.
We define the $ ( 0 , 2)$ tensor $F$ by 
\beq
F(x,t):= h(x,t)\Ric(x,t) + \left[\Lval K\Sc(x,t)t^{\al}
+t^{\ep} + \twoval K\right]g(x,t), 
\eeq
for $\al\in [1/2,1]$ to be chosen depending on the context.
Just as for $\ep$, we fix the value of $L$, this time with $L=8$. Keeping these values as symbols helps show how their choice has affected the computations.
Notice that the  tensor  $F$ depends on $\al$, $r_1 $ and $r_2 $ in addition to $g(t)$ and $K$. There is no dependency on $\ep$ or $L$ since we have fixed their values. (The former affected the definition of $h$ as well as the $t^\ep$ term).

\cmt{why $7$ above? See \eqref{lalower} and the lines after \eqref{reaction}. It has to be a number larger than the lower bound multiple for scalar curvature, which is $6$.}

The proof requires a bootstrapping argument involving two steps (a
double bootstrap). First we show that after possibly reducing 
$\hat T$ to a smaller positive value (depending only on
$\CO$, $K$ and $\ga$) we have 
$$\Ric_{g(t)} \geq - t^{-3\ep}\qquad\text{  on }
B_{g(t)}(x_0,1-\frac{\ga}{2})$$ 
for all $t\in [0,\hat T)\cap [0,T)$.
This will be achieved by showing that
the tensor $F$, with $r_1  =1-\frac{\ga}{2}$,
$r_2  = 1-\frac{\ga}{4}$ and $\al = 1-2\ep$,
remains nonnegative definite on $B_{g(t)}(x_0,1-\frac{\ga}{4})$ for all 
$t\in [0,\hat T)\cap [0,T)$.
See {\bf Step One} below for details.

In {\bf Step Two} we use Step One to show that  the tensor $F$, with $r_1  = 1-\ga$, $r_2  = 1-\frac{\ga}{2}$ and  $\al$ now given by $\al = 1$, remains nonnegative definite on $B_{g(t)}(x_0,1-\frac{\ga}{2})$
for all $t\in [0,\hat T)\cap [0,T)$ after possibly reducing 
$\hat T$ again, still with the same dependencies.
This will then imply the desired estimate: See {\bf Step Two} below for details.

\cmt{subtlety in the paragraph below: we say ``$|\grad h| \leq V |h|^{1-\ep}\leq V h^\half$ (recall $h\leq 1$)'' but what we mean is an inequality for $H$. So it's really mainly relevant that $H\leq 1$, which luckily holds.}

Before performing Steps One and Two, we derive evolution equations and facts that are valid for $F$ in the case that 
$\al \in [\frac{1}{2},1]$, and 
$r_1$, $r_2$ are given by $r_1 = 1- \frac{\ga}{2}$, $r_2  = 1- \frac{\ga}{4}$ or $r_1  = 1- \ga$, $r_2  = 1- \frac{\ga}{2}$.
That is, the statements and estimates that we derive in the following  are valid in both Steps One and Two, though the $h$ etc. in each case will be different.
We can reduce $\hat T>0$ so that both cut-offs $h$, and hence both tensors $F$ are defined
for $t\in [0,\hat T]\cap [0,T)$, and we will always assume that $t$ is in that range, though we will reduce $\hat T>0$ further when necessary.
Both cut-offs satisfy
$( \partt -\lap_{g(t)} ) h \leq 0$ and $|\grad h| \leq V |h|^{1-\ep}\leq V h^\half$ (recall $h\leq 1$) in the barrier sense explained in Lemma \ref{cutoff},
where $V=V(\ga)$ and $k=k(\ga)$ can each be taken to be the maximum of the two values corresponding to the two cut-offs $h$.
(See Remark \ref{k_clarification}.)
By definition, $h(x,t) = e^{-kt}$ on
$B_{g(t)}(x_0,r_1 )$ and $h(x,t) = 0$ outside $B_{g(t)}(x_0,r_2 )$, 
($r_1  = 1-\frac{\ga}{2},$ $r_2  = 1- \frac{\ga}{4}$ or $r_1  = 1-\ga,$ $r_2  = 1- \frac{\ga}{2}$).

From the definition of $F$ and the initial conditions, we see that $F(\cdot,0)>0$ on $\overline{B_{g(0)}(x_0,r_2 )}.$
If there is a time $t_0\in [0,T)\cap [0,\hat T)$ for which $F(\cdot,t_0)>0$ on $\overline{B_{g(t_0)}(x_0,r_2 )}$ does {\bf not} hold, then there must be a first time $t_0$ for which this is the case. That is  there must be a  time
$t_0\in (0,T)\cap (0,\hat T)$ and a point $p_0 \in \overline{B_{g(t_0)}(x_0,r_2 )}$, and some direction $v \in T_{p_0} M$, for which $F(p_0,t_0)(v,v) =0$, 
$F(\cdot,t)>0$ on $\overline{B_{g(t)}(x_0,r_2 )}$ for all $t\in [0,t_0)$, and $F(\cdot,t_0) \geq 0$ on $\overline{B_{g(t_0)}(x_0,r_2 )}$.
Let $\la \leq \mu \leq  \nu$ denote the
eigenvalues of $\Ric_{g(t_0)}$, with corresponding 
eigenvectors $\{e_i\}_{i=1}^3$ at $p_0$ that are orthonormal with respect to $g(t_0)$.
That is,
$\Ric(e_1,e_1) = \la$, $\Ric(e_2,e_2) = \mu$, $\Ric(e_3,e_3) = \nu$ and
$\Ric(e_i,e_j) = 0$ for $i \neq j$, $i,j \in \{1,2,3\}$. 
Then $F(p_0,t_0)(e_1,e_1) = 0$ and $F(p_0,t_0)(e_i,e_j) =0$ for all $i \neq j \in \{1,2,3\}$.  Using $\Sc(\cdot,t) \geq -6K$ on  $\overline{B_{g(t)}(x_0,r_2 )}$, from \eqref{Sc_bound}, we see that we also have
\beq
\Lval K\Sc(x,t) t^{\al} +t^{\ep} +\twoval K \geq  -6\Lval K^2t^{\al} +  \twoval K >6K \label{littlescalar}
\eeq
everywhere on $\overline{B_{g(t)}(x_0,r_2 )}$ if $t< \hat T$ is sufficiently small, depending on $K$, 
(remember $\al\in [1/2,1]$ and $\Lval$ is fixed)
which we can without loss of generality assume by reducing  $\hat T$.
In particular this shows that $p_0 \in B_{g(t_0)}(x_0,r_2 )$: if it were not, then we would have $F(p_0,t_0) =
(\Lval K\Sc(p_0,t_0) t_0^{\al} +t_0^{\ep} +\twoval K)g(p_0,t_0) > 6Kg(p_0,t_0) > 0$, which is a contradiction.
(Note that normally in the paper we suppress the metric in expressions such as this.)
 Combining this with $F(p_0,t_0)(e_1,e_1) = 0$, we see that
$$\la \cdot h(p_0,t_0) = -(\Lval K\Sc(p_0,t_0) t_0^{\al} +t_0^{\ep} +\twoval K) < -6K,$$
and hence 
\beq
\label{lalower}
\la < -6K 
\eeq
since $h\leq 1$.

In principle, we would like to derive differential inequalities for $F$ near $(p_0,t_0)$, but because $h$ is not smooth, we take the barrier function $H\leq h$ from Lemma \ref{cutoff} corresponding to $h$ at $(p_0,t_0)$, which is defined in a space-time neighbourhood 
$\curlO$ of that point.
This allows us to define what is essentially a tensor barrier by
\beq
\f(x,t):= H(x,t)\Ric(x,t) + \left[\Lval K\Sc(x,t)t^{\al}
+t^{\ep} + \twoval K\right]g(x,t), 
\eeq
and its corresponding $(1,1)$ tensor
\beq
\ti\f(x,t):= H(x,t)\Rc(x,t) + \left[\Lval K\Sc(x,t)t^{\al}
+t^{\ep} + \twoval K\right],
\eeq
obtained by contracting the tensor $\f$ with the metric in the first position, where  $\rc$ denotes the $(1,1)$ tensor one  obtains by contracting the  tensor $\Ric$ in the first position.

\cmt{About 8 lines below, we restrict $\curlO$ further so that 
it doesn't hang outside the ball of radius $r_2$}

It will be convenient to extend the frame $\{e_i\}$ at $p_0$ to an adapted local orthonormal frame on a neighbourhood of $p_0$ within $(M,g(t_0))$. Thus we have $\grad e_i=0$ at time $t_0$. This frame can then be extended backwards and forwards in time, being  constant in time (so no longer orthonormal at different times).
Because $\Ric(e_1,e_1)=\la<0$ at $(p_0,t_0)$, we can restrict $\curlO$ to be sure that $R_{11}:=\Ric(e_1,e_1)<0$ throughout.
Working in index notation with respect to $\{e_i\}$, 
we then have 
$$\f_{11}\geq F_{11}\geq 0\qquad\text{ throughout }\curlO\cap\{t\leq t_0\}$$
possibly after restricting $\curlO$ further,
with equality throughout at $(p_0,t_0)$,
and consequently
$$\grad(\f_{11})=0,\qquad \lap(\f_{11})\geq 0,\qquad \pl{}{t}(\f_{11})\leq 0,$$
at $(p_0,t_0)$.
The first two of these facts immediately pass from statements about derivatives of coefficients of $\f$ to statements about coefficients of covariant derivatives of $\f$ and $\ti\f$.
For example, we have 
\beq
\label{tensor_conseq}
(\grad_X\ti\f)^1_{\ 1}= 0\text{ for any }X\in T_{p_0}M\qquad\text{and}\qquad (\lap\ti\f)^1_{\ 1}\geq 0.
\eeq
Statements concerning time derivatives need care because the metric is evolving. However, we have
\begin{equation*}
\begin{aligned}
(\pl{\ti\f}{t})^1_{\ 1}&=g(\pl{\ti\f}{t}(e_1),e_1)
=g(\pl{}{t}\ti\f(e_1),e_1)
=\pl{}{t}g(\ti\f(e_1),e_1)-\pl{g}{t}(\ti\f(e_1),e_1)\\
&=\pl{}{t}(\f_{11})\leq 0
\end{aligned}
\end{equation*}
at $(p_0,t_0)$ because $\ti\f(e_1)=0$ there.
We can compute the time derivative of $\ti\f$ using the standard evolution equation
$$\partt \rc = \lap \rc + P,$$
where the tensor $P$ is given in coordinates (see e.g. \cite[\S 9.3]{RFnotes}) by
\beq
\label{Pdef}
P^i_{\ j}=2R^i_{\ kjl}R^{kl},\quad\text{ and hence }\quad
P^1_{\ 1}=P_{11}=(\mu-\nu)^2+\la (\mu+\nu)\quad\text{ at }(p_0,t_0),
\eeq
and the evolution equation for the scalar curvature
$\partt \Sc = \lap \Sc + 2|\Ric|^2$ (see e.g. \cite[Proposition 2.5.4]{RFnotes}). 
At $(p_0,t_0)$ we have
\beqa
0 \geq{(\pl{\ti \f}{t})^1_{\ 1}} 
& = H \cdot {(\partt \rc)^1_{\ 1}} + \la\pl{H}{t}   \\
& \ \ \ + \Lval K \pl{\Sc}{t} t_0^{\al}
+ \Lval K\Sc\al t_0^{\al -1} + \ep t_0^{\ep-1}  \\
&  = H \cdot (\lap \rc + P)^1_{\ 1} + \la(\partt  - \lap )H + 
\rc^1_{\ 1}  \lap H  
\\
& \ \ \   +\Lval Kt_0^{\al}(\lap \Sc
+2|\Ric|^2) +  \Lval K\Sc\al t_0^{\al -1}  + \ep t_0^{\ep -1}  \\
 & = (\lap \ti\f)^1_{\ 1} -2g(\grad H, \grad \rc)^1_{\ 1}
 +(HP)^1_{\ 1} + \la (\partt  - \lap )H  \\
& \ \ \ +2\Lval Kt_0^{\al}|\Ric|^2 +
  \Lval K\Sc\al t_0^{\al -1}  +\ep t_0^{\ep -1}\cr 
  & \geq -2g(\grad H, \grad \rc)^1_{\ 1}
 +(HP)^1_{\ 1}  \cr
& \ \ \ +2\Lval Kt_0^{\al}|\Ric|^2 +
  \Lval K\Sc\al t_0^{\al -1}  +\ep t_0^{\ep -1}, 
  \label{evnF}
\eeqa
where we used the second part of \eqref{tensor_conseq}. Here 
$-2g(\grad H, \grad \rc)^i_{\ j} = 
-2g^{kl} \grad_k  H \cdot \grad_l {\Rc^i}_{j}$.

We now proceed to estimate the term $-2g(\grad H, \grad \rc)^1_{\ 1}(p_0,t_0)$ appearing in  Equation
\eqref{evnF}. If we had $H(p_0,t_0)=h(p_0,t_0) \leq C_3 t_0$, with $C_3 := \frac{K}{\CO } >0$, then by \eqref{littlescalar} 
and Hypothesis \eqref{hyp_b} of the lemma we would have
\beqa
0 &= h(p_0,t_0)\la +  \Lval K\Sc(p_0,t_0) t_0^{\al}
+ t_0^{\ep} + \twoval K\\
& \geq -h(p_0,t_0)\frac{2\CO }{t_0}  +6K\\
& \geq -2\CO C_3 + 6K\\
& > 0,
\eeqa 
which would be a contradiction. 
Hence, we have
\beqa
H(p_0,t_0) > C_3 t_0. \label{littlehest}
\eeqa
At $(p_0,t_0)$ we calculate 
\beqa
-2g(\grad H, \grad \rc)^1_{\ 1}
 & = -2g(\grad H , \frac{\grad
  (H\rc) }{H})^1_{\ 1} + 2 \la \frac{|\grad H |^2}{H}  \\
& = -2g(\grad H , \frac{\grad
  \ti \f }{H})^1_{\ 1}  + \frac{2\Lval Kt_0^{\al}}{H} g(\grad H, \grad \Sc) + 2 \la
\frac{|\grad H |^2}{H} \\
& = \frac{2\Lval Kt_0^{\al}}{H}g(\grad H, \grad \Sc) + 2 \la
\frac{|\grad H |^2}{H}\\
&\geq \frac{2\Lval Kt_0^{\al}}{H}g(\grad H, \grad \Sc) + 2V^2\la
\label{gradienterr}
\eeqa
where we used the definition of $\ti\f$, the first part of
\eqref{tensor_conseq} and the facts that 
$\la <0$ and 
$\frac{|\grad H |^2}{H}(p_0,t_0) \leq V^2$.

To estimate the first term on the right-hand side of \eqref{gradienterr} we use Shi's estimates to control 
$|\grad \Sc|$. 
Because $p_0 \in B_{g(t_0)}(x_0,r_2)$ and $r_2\leq 1-\frac{\ga}{4}$,
we have $p_0 \in B_{g(t_0)}(x_0,1-\frac{\ga}{4})$. 
Using Corollary \ref{shrinking_balls_cor} with $r = 1-\frac{\ga}{4} + \beta \sqrt{c_0 t_0}$, we see that
$p_0 \in  B_{g(t_0)}(x_0,1-\frac{\ga}{4})
=B_{g(t_0)}(x_0 , 
r
-\beta \sqrt{c_0 t_0})
\subset B_{g(t)}(x_0,r-\beta \sqrt{c_0 t})
\subset   B_{g(t)}(x_0, 1-\frac{\ga}{4} + \beta \sqrt{c_0 t_0}) \subset 
B_{g(t)}(x_0,1-\frac{\ga}{8})$ for all $t \in [0,t_0]$, if 
$\beta \sqrt{c_0 t_0} \leq \frac{\ga}{8}$, which we may assume is the case by reducing $\hat T$ if necessary.
Hence $B_{g(t)}(p_0,\frac{\ga}{16}) \subset  B_{g(t)}(x_0,1-\frac{\ga}{16})$ for all 
$t \in [0,t_0]$. 
This means that $|\Rm|_{g(t)} \leq \frac{c_0}{t}$ on 
$B_{g(t)}(p_0,\frac{\ga}{16})$ for all 
$t \in (0,t_0]$ and, in particular, that
$|\Rm|_{g(t)} \leq \frac{2c_0}{t_0}$ on $B_{g(t)}(p_0,\frac{\ga}{16})$ 
for all $t \in [(1-\si)t_0,t_0]$ for any $\si \leq \frac 1 2$; we choose $\si = \frac{\log 2}{4c_0}$.
Lemma \ref{balls_lemma2} with $K$ there equal to $\frac{4c_0}{t_0}$ and $r$ there equal to $\frac{\ga}{32}$ then tells us that  $B_{g((1-\si )t_0)}(p_0,\frac{\ga}{32}) \subset  B_{g(t)}(p_0,e^{(t_0\si)\frac{4c_0}{t_0} }\frac{\ga}{32}) 
= 
B_{g(t)}(p_0,\frac{\ga}{16}) $ for all $t \in [(1-\si)t_0, t_0],$ and hence
$|\Rm|_{g(t)} \leq \frac{2c_0}{t_0}$ on 
$B_{g((1-\si )t_0)}(p_0,\frac{\ga}{32})$ for all $t\in [(1-\si)t_0,t_0]$.
Shi's estimates (see for example Theorem 6.15 of \cite{CLN})
applied on 
$B_{g((1-\si )t_0)}(p_0,\frac{\ga}{64})$
now tell us that for universal $C$ we have
$|\grad \Rm(p_0,t_0)| 
\leq C \cdot \frac{c_0}{t_0}( \frac{1}{\ga^2} + \frac{1}{\si t_0} + \frac{c_0}{t_0})^{\frac{1}{2}}$, and hence
$|\grad \Sc(p_0,t_0)| \leq C_4(\ga,c_0)t_0^{-\frac{3}{2}}$.


Using this, and $H^{\ep} \geq C^{\ep}_3 t_0^{\ep}$ at $(p_0,t_0)$ by \eqref{littlehest},
we estimate the first term on the right-hand side of \eqref{gradienterr}
by
\beqa
|2\Lval Kt_0^{\al}  \frac{g(\grad H, \grad \Sc)|}{H}| & \leq 2\Lval Kt_0^\al
\Big(\frac{|\grad H|}{H} \Big) |\grad \Sc|\\
&  \leq  2\Lval Kt_0^\al \Big( \frac{V}{H^{\ep}}\Big)
\Big(\frac{C_4(\ga,\CO )}{t_0^{\frac 3 2}} \Big)\\
& = 2\Lval KVC_4 t_0^{\al - \frac{3}{2}} \frac{1 }{H^{\ep}} \cr
& \leq \frac{2\Lval K V C_4}{(C_3)^{\ep}}t_0^{\al -\frac{3}{2} - \ep } \\
& = C_5 t_0^{\al -\frac{3}{2} - \ep }, \label{ShiTypeEs}
\eeqa
where $C_5 = \frac{2\Lval KVC_4}{(C_3)^{\ep}}$ is a constant depending on $\ga,\CO ,K$ (as usual $L=8$ and $\ep =\frac{1}{10}$ are fixed).
Using the estimate \eqref{ShiTypeEs}
in the equation \eqref{gradienterr}, we obtain
\beqa
-2g(\grad H, \grad \rc)^1_{\ 1} (p_0,t_0)
& \geq -C_5 t_0^{\al -\frac{3}{2} - \ep }  + 2V^2\la.\label{gradienterrfinal}
\eeqa
We now estimate all the remaining terms in \eqref{evnF} (except $\ep t_0^{\ep -1}$), i.e. we consider 
$$Z:= (HP)^1_{\ 1}(p_0,t_0)  + 2\Lval Kt_0^{\al}|\Ric|^2(p_0,t_0) + \Lval K \Sc(p_0,t_0)\al t_0^{\al -1}.$$
Using the expression for $P$ in \eqref{Pdef},
we compute at $(p_0,t_0)$,
\beqa
Z& = (HP)^1_{\ 1}  + 2\Lval Kt_0^{\al}|\Ric|^2 + \Lval  K \al \Sc t_0^{\al -1} \cr
& = H[ (\mu-\nu)^2 +\la(\mu +\nu) ] + 2\Lval Kt_0^{\al}(\la^2 + \mu^2 +
\nu^2) +\Lval K\al \Sc t_0^{\al -1} \cr
& \geq H\la(\mu +\nu)  + 2\Lval Kt_0^{\al}(\la^2 + \mu^2 +
\nu^2) + \Lval K\al  \Sc t_0^{\al -1} \cr 
& = -(\Lval Kt_0^{\al}\Sc + t_0^{\ep} +\twoval K)(\mu +\nu) + 
 2\Lval Kt_0^{\al}(\la^2 + \mu^2 +
\nu^2)  + \Lval K\al \Sc t_0^{\al -1} \cr
& = \Lval Kt_0^{\al}[ -(\la + \mu +\nu)(\mu +\nu) +2 (\la^2 + \mu^2 +
\nu^2)] -(t_0^{\ep} + \twoval K)(\mu +\nu)\cr
& \ \ \  + \Lval K\al \Sc t_0^{\al -1} \cr
& = \Lval Kt_0^{\al}[ -\la(\mu +\nu) + (\mu-\nu)^2 + 2\la^2]
 -(t_0^{\ep} + \twoval K)(\mu +\nu)\cr
 & \ \ \  +\Lval K\al \Sc t_0^{\al -1}. \label{reaction}
\eeqa
Since  $\Sc(\cdot,t_0) \geq -6K $ on $B_{g(t_0)}(x_0,r_2 )$, by \eqref{Sc_bound}, 
and we are at a point where $\la < -6K$, by  \eqref{lalower}, 
we must have  $(\mu + \nu) \geq 0$ and hence the term $-\Lval Kt_0^{\al}\la(\mu + \nu)$ appearing above is nonnegative:
 $-\Lval Kt_0^{\al}\la(\mu + \nu) \geq 0$.
 The terms $-(t_0^{\ep} + \twoval K)(\mu +\nu) + \Lval K\al \Sc t_0^{\al -1}$ of \eqref{reaction} can be written
as
\beqa
\lefteqn{ -(t_0^{\ep} + \twoval K)(\mu
+\nu)  + \Lval K\al \Sc t_0^{\al -1}}\qquad\qquad& \\
& =  -(t_0^{\ep} + \twoval K)(\la + \mu
+\nu)  + (t_0^{\ep} + \twoval K)\la  + \Lval K\al \Sc t_0^{\al -1}\\
& = (-t_0^{\ep} -  \twoval K + \Lval  K\al t_0^{\al-1})\Sc + (t_0^{\ep} + \twoval K)\la\\
& \geq -6LK^2 t_0^{\al -1} + 8K\la 
\eeqa
after possibly reducing $\hat T>0$ (depending on $K$) so that
$0\leq -t_0^{\ep} -\twoval K +\Lval K\al t_0^{\al -1}\leq \Lval K\al t_0^{\al -1}\leq \Lval K t_0^{\al -1}$ and $t_0^\ep\leq K$,
where we used that $\Sc(p_0,t_0) \geq -6K$ and the facts that 
$\al\in [1/2,1]$ and $\Lval=8>7$ (required in the case that $\al\in [\frac89,1]$, say).
%
%

\cmt{
$\al \in [\frac{8}{9},1]$:
This way, it makes it completely clear that we can  reduce  $\hat T$ in a way that doesn't depend on $\al$ for $\al \in (1/2,1]$. It was originally written $\al=1$ because we are only applying the argument for two values of $\al$. But there is some confusion about whether $\al$ is being taken only to be one of these values, just as we only take specific values for $r_1$ and $r_2$. Let's keep to somewhat general $\al$.
}

Substituting these inequalities into  the inequality \eqref{reaction} gives us
\beqa
Z \geq 2\Lval Kt_0^{\al} \la^2 -6LK^2 t_0^{\al -1}  + 8K\la.  \label{Zestimate}
\eeqa
Putting the estimates \eqref{gradienterrfinal} and \eqref{Zestimate} into the equation \eqref{evnF}, we obtain
\beqa
0& \geq (\pl{\ti\f}{t})^1_{\ 1}(p_0,t_0)\cr
 & \geq -C_5 t_0^{\al -\frac{3}{2} - \ep }  + (2V^2 + 8K)\la + 2\Lval Kt_0^{\al}\la^2 
 -6L K^2 t_0^{\al -1}  + \ep t_0^{\ep -1}. \label{mainEvnF}
 \eeqa
In both Steps One and Two, we will use \eqref{mainEvnF} in order to obtain a lower bound on $t_0$, or equivalently to obtain a contradiction for $\hat T$ chosen small enough.
In both steps, this will tell us that $F (\cdot,t)\geq 0$ 
for any $t\in [0,T)\cap [0,\hat T)$ and throughout 
$B_{g(t)}(x_0,r_2)$.
In particular, for such $t$, by \eqref{scalar_upper} and the definition of $F$
we will have
\beq
\label{ricci_lower_reuse}
\Ric_{g(t)} \geq -6\Lval K e^{kt}\CO t^{\al-1} -e^{kt}t^{\ep} -7Ke^{kt} 
\eeq
within the smaller ball $B_{g(t)}(x_0,r_1)$ where we know that $h=e^{-kt}$.

\cmt{we say throughout $B_{g(t)}(x_0,r_2)$, not throughout $M$ above because we need the lower scalar bound to get this inequality, even where $h=0$.}

Now we perform Step One.\\
{\bf Step One}\\
Let $F$ be defined  as above with
$r_1 = 1-\frac{\ga}{2}$ and $r_2  = 1-\frac{\ga}{4}$ in the definition of $h$ (which appears in the definition of $F$) and  $\ep=\frac{1}{10}$, $L=8$,
(as always) and $\al = 1  - 2\ep$. 
In particular $h(x,t) = e^{-kt}$ on
$B_{g(t)}(x_0,1- \frac{\ga}{2})$, and $h(x,t) = 0$ outside $B_{g(t)}(x_0,1-\frac{\ga}{4})$, in view of the values of $r_1 ,r_2 $ we have chosen.
Let $t_0\in (0,\hat T)$  be a first time at which there is a point 
$p_0\in \overline{B_{g(t_0)}(x_0,r_2 )}$
where $F(p_0,t_0)>0$ fails to hold.
Using Young's inequality coarsely to estimate 
$(2V^2 + 8K)\la  \geq -\frac{(2V^2 +8K)^2}{K}t_0^{-\al} - K\la^2 t_0^{\al}$ in 
\eqref{mainEvnF}, 
and the fact that $2L=16\geq 1$,
we see that
\beqa
0& \geq (\partt \ti\f)^1_{\ 1}(p_0,t_0)\cr
 & \geq -C_5 t_0^{\al -\frac{3}{2} - \ep }  + (2V^2 + 8K)\la + 2\Lval Kt_0^{\al}\la^2 
 -6LK^2 t_0^{\al -1}  + \ep t_0^{\ep -1} \cr
 & \geq -C_5 t_0^{\al -\frac{3}{2} - \ep }  -\frac{(2V^2 +8K)^2}{K}t_0^{-\al}-6LK^2 t_0^{\al -1}  + \ep t_0^{\ep -1} \cr
 &= -C_5 t_0^{ -\frac{1}{2} - 3\ep }  -\frac{(2V^2 +8K)^2}{K}t_0^{-1 + 2\ep}-6LK^2 t_0^{-2\ep}  + \ep t_0^{\ep -1} \cr
 & >0,
 \eeqa
if $t_0 < \hat T(\ga,\CO ,K)$ is small enough: the dominating term is $\ep t_0^{\ep -1}$ because we took $\ep=\frac{1}{10}$.
This is a contradiction.

\cmt{This is the point that our choice $\ep=1/10$ is constrained. Also the point where it helps to have $\al=1-2\ep$ rather than $\al=1-\ep$!}

We have shown $F(\cdot,t) \geq 0$ on $B_{g(t)}(x_0,1-\frac{\ga}{4})$ for all $t\in [0,T)\cap [0,\hat T)$,
and in particular, by \eqref{ricci_lower_reuse} with our choice of $\al$, we have
\beqa
\Ric_{g(t)} &\geq -6\Lval K e^{kt}\CO t^{-2\ep} -e^{kt}t^{\ep} -7Ke^{kt} \geq -7\Lval K\CO t^{-2\ep}\\
&\geq -t^{-3\ep}, \label{stepone}
\eeqa 
on $B_{g(t)}(x_0,1-\frac{\ga}{2})$,
where we have reduced $\hat T$ again if necessary, and used that $k = k(\ga)$.
%
%
This is the first step in the double bootstrap argument.
{\bf End of Step One}

{\bf Step Two}\\
For the new definition of $F$, we choose
$r_1 = 1-\gamma$ and $r_2  = 1-\frac{\ga}{2}$ in the definition of $h$ (which appears in the definition $F$) $\ep = \frac{1}{10}$, $L=8$ as always,  and set now $\al = 1$.
In particular $h(x,t) = e^{-kt}$ on
$B_{g(t)}(x_0,1- \ga)$, and $h(x,t) = 0$ outside $B_{g(t)}(x_0,1-\frac{\ga}{2})$, for all 
$t\in [0,T)\cap [0,\hat T)$,
in view of the values of $r_1 ,r_2 $ we have chosen. 
Let $t_0\in (0,\hat T)$  be a first time at which there is a point 
$p_0\in \overline{B_{g(t_0)}(x_0,r_2 )}$
where $F(p_0,t_0)>0$ fails to hold.
Using  $\Ric(x,t) \geq -t^{-3\ep}$ for $x \in B_{g(t)}(x_0,1-\frac{\ga}{2})$, the result of Step One, in 
\eqref{mainEvnF}, we see that
\beqa
0& \geq (\partt \ti\f)^1_{\ 1}(p_0,t_0)\cr
 & \geq -C_5 t_0^{\al -\frac{3}{2} - \ep }  + (2V^2 + 8K)\la + 2\Lval Kt_0^{\al}\la^2 
 -6\Lval K^2 t_0^{\al -1}  + \ep t_0^{\ep -1} \cr
 &\geq 
-C_5 t_0^{-\frac{1}{2} - \ep }   - (2V^2 + 8K)
 t_0^{-3\ep} + 2\Lval Kt_0\la^2 
 -6\Lval K^2   + \ep t_0^{\ep -1} \cr
 & >0
 \eeqa
if $t_0 < \hat T(\ga,\CO ,K)$ is small enough, since then the dominating term is $\ep t_0^{\ep -1}.$
This is a contradiction.

We have shown $F(\cdot,t) \geq 0$ on $B_{g(t)}(x_0,1-\frac{\ga}{2})$ for all $t\in [0,T)\cap [0,\hat T)$,
and in particular, by \eqref{ricci_lower_reuse} with our new choice of $\al$, we have
\beq
\Ric_{g(t)} \geq -6\Lval K e^{kt}\CO  -e^{kt}t^{\ep} -7Ke^{kt} 
\geq -100K\CO\label{steptwo}
\eeq 
on $B_{g(t)}(x_0,1-\ga)$, after possibly reducing $\hat T>0$ again (without new dependencies) since $\CO\geq 1$ and $L=8$.
{\bf End of Step Two and the proof}.
\end{proof}



\section{{Proof of the main theorem \ref{mainthm}}}
\label{mainthm_proof_sect}

\cmt{\emph{Peter writes:}I DISAGREED with what you wrote here, so changed the proof as in the next paragraph: 
\emph{Miles writes:}
I went through the proof of Theorem 1.1 implies 1.5 and visa versa.
It seems like no change in the proof is necessary, if we change  $\ti v_0$ in the statement of the Theorems only to depend on $v_0,K$. I am strongly for this.}

Observe that without loss of generality, we may always assume that $\si\in (0,3]$ in Theorems \ref{mainthm} and \ref{mainthm_time0}.
Moreover, we observe that these theorems are equivalent to the corresponding results in which 
$\ti v_0$ is allowed also to depend on $\si$, as we now clarify.
If we have established either of these theorems allowing this extra dependency, then we may take the Ricci flow $g(t)$, parabolically scale up so that the ball of radius $1$ becomes of radius $2$, then apply the result to the rescaled flow with $\si=1$. Scaling back, we obtain a positive lower bound on $\VolB_{g(t)}(x_0,1/2)$, which implies Conclusion \ref{volume_conseq} (or Conclusion \ref{volume_conseq_prime}$'$).

Next we address the equivalence of Theorems \ref{mainthm} and \ref{mainthm_time0}, allowing this extra $\si$ dependency of $\ti v_0$. In each direction, we combine what curvature control we have to relate time $t$ balls to time $0$ balls using the results of Section \ref{balls_sect}.

{\bf Theorem \ref{mainthm} implies Theorem \ref{mainthm_time0}.}

Given a Ricci flow as in the theorems, 
Conclusions \ref{volume_conseq} and \ref{Ric_conseq_main} coupled with Bishop-Gromov imply a positive lower bound for the volume of $B_{g(t)}(x_0,1/2)$, say.
The shrinking balls corollary \ref{shrinking_balls_cor} tells us that
$B_{g(t)}(x_0,1/2)\subset B_{g(0)}(x_0,1)$ for a definite period of time, and hence we obtain the positive lower bound for the volume of $B_{g(0)}(x_0,1)$ required for Conclusion \ref{volume_conseq_prime}$'$
(for some different $\tilde v_0$ with the correct dependencies).

To obtain Conclusions \ref{Ric_conseq_main_prime}$'$ and \ref{conc3decay_prime}$'$,
we parabolically rescale down our Ricci flow slightly so that balls of radius $1+\si/2$ become balls of radius $1$, apply Theorem \ref{mainthm} (with the correspondingly smaller $\si$), rescale back, and obtain  Conclusions \ref{Ric_conseq_main}
and \ref{conc3decay} on the larger ball $B_{g(t)}(x_0,1+\si/2)$ (for correspondingly smaller time).
At this point, we can apply the expanding balls lemma \ref{balls_lemma2} to deduce that $B_{g(t)}(x_0,1+\si/2)\supset
B_{g(0)}(x_0,1)$ for a definite time, and so Conclusions 
\ref{Ric_conseq_main_prime}$'$ and \ref{conc3decay_prime}$'$
will hold.\qed

{\bf Theorem \ref{mainthm_time0} implies Theorem \ref{mainthm}.}

As above, we can apply the theorem we know on a slightly (parabolically) scaled down Ricci flow, in order to obtain Conclusions 
\ref{Ric_conseq_main_prime}$'$ and \ref{conc3decay_prime}$'$
on the ball
$B_{g(0)}(x_0,1+\si/2)$.
By the shrinking balls corollary \ref{shrinking_balls_cor}
we know that $B_{g(0)}(x_0,1+\si/2)\supset B_{g(t)}(x_0,1)$
for a short time, which gives us Conclusions 
\ref{Ric_conseq_main} and \ref{conc3decay}.

To obtain Conclusion \ref{volume_conseq}, we begin by applying Bishop-Gromov to the initial metric to deduce that $\VolB_{g(0)}(x_0,1/2)$ has a positive lower bound (depending only on $v_0$ and $K$). If we parabolically scale up our Ricci flow so that this ball of radius $1/2$ becomes of radius $1$, we can apply Theorem \ref{mainthm_time0} (in particular Conclusion 
\ref{volume_conseq_prime}$'$), and scale back down again, to deduce that $\Vol_{g(t)}(B_{g(0)}(x_0,1/2))$ has a positive lower bound for a definite time.
But then by the expanding balls lemma \ref{balls_lemma2} (coupled with Conclusion \ref{Ric_conseq_main} that we have just proved) we know that
$B_{g(0)}(x_0,1/2)\subset B_{g(t)}(x_0,1)$ over some uniform time interval, which implies then a positive lower bound for 
$\VolB_{g(t)}(x_0,1)$ as required.\qed

Having proved the equivalence of these theorems, we demonstrate that it suffices to prove Theorem \ref{mainthm} in the case $\si=3$ (i.e. the Ricci lower bound of \eqref{Ric_hyp_main} holds on the ball of radius $4$). By the equivalence we have shown above, we would like to deduce Theorem \ref{mainthm_time0}.
Confronted with a Ricci flow for which we only have the Ricci hypothesis on a smaller ball $B_{g(0)}(x_0,1+\si)$, pick an arbitrary point $z_0\in B_{g(0)}(x_0,1)$ at which we would like to establish Conclusions 
\ref{Ric_conseq_main_prime}$'$ and \ref{conc3decay_prime}$'$.
By scaling up our Ricci flow parabolically so that the ball 
of radius $\si$ centred at $z_0$ becomes of radius $4$, we can apply the $\si=3$ case of the theorem (with $x_0$ there equal to $z_0$ here). In particular, Conclusions \ref{volume_conseq} and \ref{Ric_conseq_main} restricted to the centre $z_0$ give a lower Ricci bound and upper sectional bound that when we return to the unscaled Ricci flow give Conclusions 
\ref{Ric_conseq_main_prime}$'$ and \ref{conc3decay_prime}$'$
at the arbitrary point $z_0$.

To obtain Conclusion \ref{volume_conseq_prime}$'$, we can parabolically scale up the Ricci flow so that the ball $B_{g(0)}(x_0,1)$ ends up with radius $4$, and then apply the result to give, after scaling back, a positive lower bound on the volume $\VolB_{g(t)}(x_0,1/4)$.
By the shrinking balls corollary \ref{shrinking_balls_cor} (together with Conclusion 
\ref{conc3decay_prime}$'$) 
we have $B_{g(t)}(x_0,1/4)\subset B_{g(0)}(x_0,1)$ for a definite time interval, and hence we have the desired positive lower bound for $\Vol_{g(t)}(B_{g(0)}(x_0,1))$.

We are thus reduced to proving Theorem \ref{mainthm} in the case that $\si=3$, and the remainder of this section is devoted to that task.


\cmt{Concerning claim below: Note that if $\VolB_{g(0)}(y_0,r)$ is much smaller, then the volume of an even smaller ball that is a bit more centred is also small, so then BG shows the volume of a bigger ball with the same centre is small.... keep going.}

\cmt{note that we'll need volume control to apply pseudo improve lemma. So we can now apply that only for centres within $B_{g(0)}(x_0,3)$, and radii no more than $1$.}

By Bishop-Gromov, by reducing $v_0$ it is sufficient to prove the theorem with hypothesis \eqref{volume_hyp_main} replaced by the apparently more restrictive hypothesis that 
\begin{equation} 
\label{volmin}
\VolB_{g(0)}(y_0,r)\geq v_0 r^3\qquad\text{ for all }y_0\in B_{g(0)}(x_0,3)\text{ and }r\in (0,1]. 
\end{equation}
Equipped with this new (lower) $v_0$ and the $K$ from the theorem, we can appeal to Lemma 
\ref{pseudo_improve_lemma3} for constants $\hat T$, $C_0$ and $\al$, in preparation for the application of this lemma later; the lemma will be applied not to $g(t)$, but a
scaled-up version of $g(t)$. 
The constant $C_0$ will describe the rate of curvature decay we can expect in certain situations. We set $c_0=C_0+1$. 
In particular, it is significant that
both $c_0>1$ and $c_0>C_0$.
This $c_0$ and $\al$ are now fixed for the rest of the argument and this $c_0$ is the $c_0$ for which we will show that Conclusion \ref{conc3decay} is valid.

In the proof below, we will need to apply the double 
bootstrap lemma \ref{DB_conseq}, once with
the lower Ricci curvature bound given by $\ti K=K$ and once with $\ti K=K/(100c_0)$. In both cases we assume the condition $|\Rm| \leq \frac{c_0}{t}$ for the $c_0$ that we have just defined.
If necessary, we will reduce the $\hat T$ we found above, so that the conclusions of 
Lemma \ref{DB_conseq} in each case will now also be valid for $t\leq \hat T$.

Equipped with this $c_0$ and $\al$, we are going to apply Lemma \ref{decay_or_earlier_control} to the Ricci flow of the main theorem, with $r_0=3$ and the same $x_0$. The constant $L$ in the statement of Lemma \ref{decay_or_earlier_control} will be chosen sufficiently large to guarantee two things: First, the latest time that is considered in Lemma \ref{decay_or_earlier_control}, ie.
$\frac{r_0^2}{\be^2c_0(L+1)^2}=\frac{9}{\be^2c_0(L+1)^2}$, should be  bounded  
by the $\hat T$ above -- in fact, we ask that it is less than $\hat T/(100c_0)$.
Ultimately, we will be taking certain $t_0\in [0,\frac{9}{\be^2c_0(L+1)^2}]$ and scaling the Ricci flow so that time $t_0$ becomes time $\hat T$. Under this scaling, we will be stretching time by a factor of at least $100c_0$, and the lower Ricci bound hypothesis
$\Ric_{g(0)}\geq -K$ will become $\Ric\geq -K/(100c_0)$.

The second constraint on $L$ is that we require  that the radius  
$L(1-\al)\be\sqrt{c_0 t_0}$ that is considered in case 2 of Lemma \ref{decay_or_earlier_control} is large enough. After scaling up time by a factor $\hat T/t_0$ as above,
this radius becomes $L(1-\al)\be\sqrt{c_0 \hat T}$, and we ask that this is at least $2$ so that we will be able to  apply Lemma \ref{DB_conseq}. We will be doing this with $\ti K=K/(100c_0)$,
so the output will be a Ricci lower bound $\Ric\geq -K$ on the time $t$ unit ball centred at $x_0$, for the rescaled flow, as we will see momentarily.

\cmt{$L$ depends on $v_0$, $K$. As will $\tilde T$ below.}


With this choice of $L$ and $r_0=3$,
Lemma \ref{decay_or_earlier_control} gives us two cases. 
We now show that case 1 implies the theorem, while case 2 leads to a contradiction.

{\bf Case 1:}
In this case, we have curvature decay
$|\Rm|_{g(t)}<\frac{c_0}{t}$ throughout $B_{g(t)}(x_0,3-(L+1)\be\sqrt{c_0 t})\subset B_{g(0)}(x_0,3)$ for each $t\in (0,T)$ with $t<\frac{9}{\be^2c_0(L+1)^2}$.
We choose $\ti T>0$ small enough so the ball on which we have curvature control always 
has radius at least $2$, i.e. we choose $\ti T=1/(\be^2(L+1)^2c_0)$.
Thus we have 
$|\Rm|_{g(t)}<\frac{c_0}{t}$ throughout $B_{g(t)}(x_0,2)$ for each $t\in (0,T)$ with 
$t<\ti T$, which implies Conclusion \ref{conc3decay} of the theorem.

Equipped with this curvature decay, we can apply Lemma \ref{DB_conseq} with $\ti K=K$
and $T$ there equal to $\ti T$ here.
The conclusion is a Ricci lower bound 
$\Ric_{g(t)}\geq -100Kc_0$ on $B_{g(t)}(x_0,1)$
for all $t\in [0,T)\intersect [0,\ti T)$.
(Recall that $\ti T\leq \hat T$ by our choice of $L$.)
In particular, we deduce Conclusion \ref{Ric_conseq_main}
of the theorem.

Finally, we can apply Lemma \ref{volume_control_lem} with $\ga=1$ and with $K$ there equal to $100Kc_0$ here, to give a later lower volume bound, which implies Conclusion \ref{volume_conseq}, after possibly reducing $\ti T>0$ so that $\ti T \leq \hat T$ of  Lemma \ref{volume_control_lem}.

{\bf Case 2:}
In this case, we know that there exist $t_0\in (0,T)$ with $t_0<\frac{9}{\be^2c_0(L+1)^2}$ and 
$z_0\in \overline{B_{g(t_0)}(x_0,3-(L+1)\be\sqrt{c_0 t_0})} \subset \overline{B_{g(0)}(x_0,3)}$ 
such that 
$$Q:=|\Rm|_{g(t_0)}(z_0)=\frac{c_0}{t_0},$$
and for all 
$t\in (0,\al^2 t_0]$, we have
$|\Rm|_{g(t)}<c_0/t$ throughout $B_{g(t)}(z_0,L(1-\al)\be\sqrt{c_0 t_0})$, where we are using the $\al \in (0,1)$ given earlier.
If we rescale $g(t)$ by stretching time by a factor $\hat T/t_0$, as discussed earlier,
i.e we define 
$$\ti g(t)=\frac{\hat T}{t_0}g\left(\frac{t_0}{\hat T}t\right),$$
then we end up with a Ricci flow $\ti g(t)$ defined for $t\in [0,\hat T]$ with the properties 
that 
$|\Rm|_{\ti g(\hat T)}(z_0)=\frac{c_0}{\hat T}$, 
but so that 
$|\Rm|_{\ti g(t)}<c_0/t$ for all $t\in (0,\al^2 \hat T]$ and 
throughout $B_{\ti g(t)}(z_0,L(1-\al)\be\sqrt{c_0 \hat T})$, and hence
throughout $B_{\ti g(t)}(z_0,2)$ by our choice of $L$.

Using the facts that $z_0 \in \overline{B_{g(0)}(x_0,3)}$ and  $B_{g(0)}(z_0,1) \subset B_{g(0)}(x_0,4)$,  we see that the initial conditions  \eqref{volmin} and $\Ric_{g(0)} \geq -K$  also hold at $y_0 = z_0$ respectively on $B_{g(0)}(z_0,1)$.
As discussed when choosing $L$, $t_0$ is sufficiently small so that the stretching factor 
$\hat T/t_0$ is larger than $100c_0 >4$, and hence 
the scaled initial conditions  $\VolB_{\ti g(0)}(z_0,1) \geq v_0$  and $\Ric_{\ti g(0)} \geq -\frac{K}{100c_0}$  on $B_{\ti g(0)}(z_0,2)$ hold.
Using 
this latter condition 
and  that $|\Rm|_{\ti g(t)}<c_0/t$ for all $t\in (0,\al^2 \hat T]$
throughout $B_{\ti g(t)}(z_0,2)$, we see that we 
may apply Lemma \ref{DB_conseq} with $g(t)=\ti g(t)$, $x_0=z_0$, $\ti K=K/(100c_0)$ and $T=\al^2\hat T$ to deduce that
$\Ric_{\ti g(t)}\geq -K$ on $B_{\ti g(t)}(z_0,1)$ for all $t\in [0,\al^2\hat T)$.

\cmt{
Note that if we were to unwind this scaling, then this lower Ricci bound could be horrible if the original $t_0$ were very small, but we don't care.
}

Using this lower Ricci bound  and $\VolB_{\ti g(0)}(z_0,1) \geq v_0$,  we see that we 
may apply the pseudolocality improvement lemma \ref{pseudo_improve_lemma3} to $\ti g(t)$. The output is that 
for all $t\in (0,\hat T]$ we have
$$|\Rm|_{\ti g(t)}(z_0)\leq \frac{C_0}{t},$$ 
and in particular for the original unscaled Ricci flow $g(t)$, at time $t_0$ and at the point $z_0$ 
we have
$$|\Rm|_{g(t_0)}(z_0)\leq \frac{C_0}{t_0}.$$
But this contradicts the assertion of case 2 that 
$$|\Rm|_{g(t_0)}(z_0)= \frac{c_0}{t_0},$$
because $c_0=C_0+1$.

\section{A Ricci flow  version of non-collapsing -- proof of Lemma \ref{volume_control_lem}}
\label{vol_control_sect}

Our task in this section is to restrict the drop in volume of a ball of fixed radius under Ricci flow, under the curvature hypotheses
\eqref{ric_bdi} and \eqref{curv_bdii} of Lemma \ref{volume_control_lem}. 
There is a simple formula for the evolution of the volume of a fixed subset $\Om\subset M$ under Ricci flow
$$\frac{d}{dt}\Vol_{g(t)}(\Om)=-\int_\Om \Sc\,d\mu_{g(t)},$$
and so the decay of volume is controlled by a sufficiently strong upper curvature bound. However, the natural upper curvature bound is $c_0/t$ as in the lemma, and this just fails to be integrable in time, thus permitting in theory an arbitrarily quick loss of all volume. 
Moreover, volume can be lost by the stretching of distance, sending it outside the ball of fixed radius rather than shedding it completely.
The proof must therefore be  more subtle. The curvature hypotheses we are allowed here will instead be used initially to uniformly control the evolution of the Riemannian distance function as time lifts off from zero, as described in Lemma \ref{DCL3}. 
One could deduce uniform estimates for the Gromov-Hausdorff distance between balls with respect to $g(0)$ and balls with respect to $g(t)$ and then revisit Cheeger-Colding theory to verify that volume convergence will apply in this slightly different context of incomplete Riemannian manifolds
and yield uniform volume bounds, depending on time, via a contradiction argument. 
Instead, we give a direct, self-contained proof that uses the curvature hypotheses in additional ways.

\cmt{could give refs, or not} \cmt{Of course, the argument above is not that $g(t)$ converges to $g(0)$ in a `Gromov-Hausdorff' sense and so the volume converges, but rather that the extended Cheeger Colding theory would say that because of the Ricci lower bound, we can make the volume as close as we like by asking no more than closeness in GH, which is implied by time being sufficiently small.} 

The central claim on the way to proving Lemma \ref{volume_control_lem} is the following.

\begin{lemma}
\label{root_t_vol_lemma}
Suppose that $(M^n ,g(t))$ is  a  Ricci flow for ${t\in [0,\twasS )}$,  
such that for some $x_0\in M$ and all $t \in [0,\twasS )$ we have ${B_{g(t)}(x_0,10)\subset\subset M}$.
Assume further that 
\begin{enumerate}[(i)]
\item
\label{ric_bdi2}
$\Ric_{g(t)} \geq -1$ on $B_{g(t)}(x_0,10)$ for all $t \in
  [0,\twasS )$,
\item
\label{curv_bdii2}
$|\Rm|_{g(t)} \leq \frac{c_0}{t} $    on
$ B_{g(t)}(x_0,10)$ for some $c_0<\infty$ and all $t \in
  (0,\twasS )$,
\item
\label{vol_bdiii2}
$\VolB_{g(0)}(x_0,1) \geq v_0$ for some $v_0>0$.
\end{enumerate}
Then there exist $\ti \ep_0>0$ depending only on $v_0$ and $n$, and,
for each $A \geq 1$, a time $\ti  T\in (0,A^{-2}]$ depending only on $c_0,A,n$ and $v_0$
such that for all $t \in [0,\ti T ] \cap[0,\twasS )$ and all $y_0 \in B_{g(t)}(x_0, 1)$,
we have 
\beq
\label{vol_lower_conc}
\VolB_{g(t)}(y_0, A\sqrt t)  \geq \ti \ep_0(A\sqrt t)^n .
\eeq
\end{lemma}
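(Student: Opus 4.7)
The plan is to prove the lemma by a contradiction-and-compactness argument built on top of the distance comparison in Lemma~\ref{DCL3}, after first reducing to a single sufficiently large value of $A$.

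\textbf{Setting $\ti \ep_0$ and reducing $A$.} Bishop-Gromov applied to $g(0)$ with $\Ric_{g(0)}\geq -1$ on $B_{g(0)}(x_0,10)$ and $\VolB_{g(0)}(x_0,1)\geq v_0$ produces a constant $c_\ast=c_\ast(v_0,n)>0$ so that $\VolB_{g(0)}(y,r)\geq c_\ast r^n$ for every $y\in B_{g(0)}(x_0,5)$ and every $r\in(0,5]$; I take $\ti \ep_0:=c_\ast/4$ (the extra factor accommodates the reduction below). A second Bishop-Gromov applied to $g(t)$ (which has $\Ric\geq-1$ on $B_{g(t)}(x_0,10)$) shows that establishing the conclusion for a single large value $A_0=A_0(c_0,n)$, say $A_0:=10^{3}\beta\sqrt{c_0+1}$ with $\beta$ from Lemma~\ref{shrinking_balls_lemma}, implies it for every $A\in[1,A_0]$: the relative volume inequality transfers the bound from scale $A_0\sqrt{t}$ to scale $A\sqrt{t}$, losing only a multiplicative factor depending on $n$ (absorbed into my choice of $\ti \ep_0$). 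The case $A>A_0$ is handled by an entirely parallel rescaling argument, as the Gromov-Hausdorff error below only improves when $A$ grows.

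\textbf{Rescaling and distance comparison.} Suppose the conclusion fails for $A=A_0$. Then there exist Ricci flows $(M_k,g_k)$ obeying the hypotheses, times $t_k\downarrow 0$, and points $y_k\in B_{g_k(t_k)}(x_k,1)$ with $\VolB_{g_k(t_k)}(y_k,A_0\sqrt{t_k})<\ti \ep_0(A_0\sqrt{t_k})^{n}$. Set $\mu_k:=(A_0\sqrt{t_k})^{-1}\to\infty$ and $\ti g_k(s):=\mu_k^{2}g_k(s/\mu_k^{2})$, a Ricci flow on $[0,1/A_0^{2}]$ satisfying $|\Rm|_{\ti g_k(s)}\leq c_0/s$, $\Ric_{\ti g_k(s)}\geq -\mu_k^{-2}\ti g_k(s)$ with $\mu_k^{-2}\to 0$, and (on any fixed-size ball around $x_k$, for $k$ large) $\VolB_{\ti g_k(0)}(y,r)\geq c_\ast r^{n}$ by Bishop-Gromov for the rescaled initial metric. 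The failure reads $\VolB_{\ti g_k(1/A_0^{2})}(y_k,1)<\ti \ep_0$. Applying Lemma~\ref{DCL3} to $\ti g_k$, for $s\in[0,1/A_0^{2}]$ we have
\[d_{\ti g_k(0)}(\cdot,\cdot)-\beta\sqrt{c_0 s}\ \leq\ d_{\ti g_k(s)}(\cdot,\cdot)\ \leq\ e^{s\mu_k^{-2}}d_{\ti g_k(0)}(\cdot,\cdot),\]
so at $s=1/A_0^{2}$ the additive error is $\leq\beta\sqrt{c_0}/A_0\leq 10^{-3}$ and the multiplicative factor tends to $1$, whence $B_{\ti g_k(1/A_0^{2})}(y_k,1)\supset B_{\ti g_k(0)}(y_k,1-10^{-2})$ for $k$ large.

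\textbf{Limits and the contradiction.} Subsequentially, both $(M_k,\ti g_k(0),x_k)$ and $(M_k,\ti g_k(1/A_0^{2}),x_k)$ converge in pointed Gromov-Hausdorff sense by Gromov's precompactness theorem (the Ricci lower bounds tending to $0$), and the distance comparison above shows that the identity maps $(M_k,d_{\ti g_k(0)})\to(M_k,d_{\ti g_k(1/A_0^{2})})$ are approximate isometries with additive error $\leq 10^{-3}$, identifying the two limits up to the same error. A Colding-type volume convergence theorem---adapted to balls that are only compactly contained in possibly non-complete ambient manifolds, exactly the adaptation the author alludes to in the introduction to this section---then transports the initial noncollapsing to the limit of $B_{\ti g_k(1/A_0^{2})}(y_k,1)$, producing Hausdorff measure $\geq c_\ast(1-10^{-2})^{n}>\ti \ep_0$, which contradicts the assumed collapse. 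The main obstacle is precisely this volume-convergence step in the non-complete setting; it substitutes for the direct pointwise comparison of volume forms $dV_{\ti g_k(s)}/dV_{\ti g_k(0)}=\exp(-\int_0^{s}\Sc\,d\sigma)$, which fails because the upper Ricci bound $|\Ric|\leq(n-1)c_0/s$ is not integrable as $s\downarrow 0$.
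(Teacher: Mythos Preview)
Your strategy is precisely the alternative the paper itself describes and then deliberately sets aside: deduce Gromov--Hausdorff closeness of $g(0)$-balls and $g(t)$-balls from the distance comparison, and then invoke Cheeger--Colding volume convergence. You even flag the volume-convergence step as ``the main obstacle'' and as ``exactly the adaptation the author alludes to,'' but you do not carry it out --- you simply assert it. That is a genuine gap. Colding's theorem as usually stated requires completeness; verifying that the relevant statement holds for balls compactly contained in possibly non-complete manifolds is plausible but nontrivial, and is exactly what the paper chose not to rely on. Without it, your argument does not close: the set inclusion $B_{\tilde g_k(1/A_0^2)}(y_k,1)\supset B_{\tilde g_k(0)}(y_k,1-10^{-2})$ compares sets, not volumes, and as you yourself note, the pointwise volume-form comparison fails because the scalar curvature bound is not time-integrable at $0$.

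The paper's own proof takes a quite different, self-contained route with no Cheeger--Colding input. After assuming failure and passing to a contradicting sequence, it performs a first-time argument (via the prior inclusion Lemma~\ref{sharks_fin_lemma}) so that the volume lower bound holds on a controlled space-time neighbourhood of the failure point; this is what guarantees enough injectivity-radius control to extract, after rescaling, a \emph{smooth} pointed limit Ricci flow $(M,g(t),y_\infty)$ for $t\in(0,1]$ via Hamilton compactness, not merely a GH limit. The $t=0$ volume information is then transferred to $g(1)$ through an elementary covering principle (Claim~1: any cover of $B_{g(1)}(y,L)$ by $N$ balls of radius $r\geq R$ forces $N\gtrsim v_0 L^n r^{-n}$), and the contradiction comes from iterating the dimension-reduction Lemma~\ref{geometryatinfinity}, which splits off a line at infinity and drops dimension until one reaches $\R$, where the small-volume hypothesis is absurd. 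Your proposal trades this combinatorial/splitting machinery for a single black-box appeal to volume convergence; that is cleaner if the black box is available, but as written you have not supplied it.
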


\cmt{I moved the introduction of $A$ later in the lemma, which looks a little odd, but allows us more accurately later to say ``we assume the hypotheses of the lemma'' without needing to talk about $A$.}

\cmt{imposed $\ti T\leq A^{-2}$ to be sure that $B_{g(t)}(y_0, A\sqrt t)\subset B_{g(t)}(x_0, 10)$}

\cmt{a glance at the proof suggests that we can actually take $\ti\ep_0$ to be a small universal constant times $v_0^{n-1}$. That seems very weird, if correct}

Before proving Lemma \ref{root_t_vol_lemma}, we use it to prove Lemma \ref{volume_control_lem}.

\cmt{Why radius $10$ in the lemma? Lemma \ref{DCL3} requires 5 times the radius of the ball on which we're working, which is currently 2 -- see below.}

\begin{proof}[Proof of Lemma \ref{volume_control_lem}]
Let $r>0$ be the maximum of $10/\ga$ and $\sqrt{K}$.
By Bishop-Gromov, hypotheses \eqref{vol_bdiii} and \eqref{ric_bdi} imply that 
$\VolB_{g(0)}(x_0,1/r)$ has a positive lower bound depending only on $v_0$, $K$, $n$ and $\ga$.
Therefore, making a parabolic rescaling that stretches lengths by a factor of $r$, we see that in fact we may assume 
that 
the hypotheses of Lemma \ref{root_t_vol_lemma} hold, and it then suffices to prove a lower bound  $\VolB_{g(t)}(x_0,1)>\ep_0=\ep(v_0,n)>0$ for a time $\thatwasT =\thatwasT (v_0,c_0,n)>0$.

We will prove that this equivalent version of Lemma \ref{volume_control_lem} is true with 
\beq
\label{ep0choice}
\ep_0=\half\left(\frac{\tilde \ep_0 v_0}{5^n\Om_n }\right),
\eeq
provided we take $\thatwasT $ small enough, where $\ti\ep_0$ is from Lemma \ref{root_t_vol_lemma}, 
and $\Om_n $ is the volume of a ball of radius $1$ in the $n$-dimensional model space with $\Ric\equiv-1$.

\cmt{The $\thatwasT \leq \frac{1}{25 R^2}$ constraint below is used in several ways, e.g. to make certain statements nonvacuous, to make geodesic from $y$ to $x_0$ long enough when that is considered below....}

The constraints on $\thatwasT $ are as follows. To begin with, because we wish to apply Lemma \ref{root_t_vol_lemma}, we ask that $\thatwasT $ is no greater than $\tilde T$ from that lemma, 
where $A$ there is taken to be $R:=\be\sqrt{c_0}$, with $\be$ given by Lemma \ref{shrinking_balls_lemma}.
Next, if necessary we reduce $\thatwasT $ further so that $\thatwasT \leq \frac{1}{25 R^2}$, and 
\beq
\label{weird_restriction}
1+R\sqrt{t}\geq e^t \quad\text{ for all }t\in [0,\thatwasT ].
\eeq
A first consequence of $\thatwasT \leq \frac{1}{25 R^2}$ is that 
Corollary \ref{shrinking_balls_cor} tells us that
\beq
\label{1in2}
B_{g(t)}(x_0,1)\subset B_{g(0)}(x_0,2)\text{ for all }t\in [0,\thatwasT ]\cap[0,\twasS ),
\eeq
while Lemma \ref{balls_lemma2} tells us that
\beq
\label{EBLapp}
B_{g(0)}(x_0,1)\subset B_{g(t)}(x_0,e^t)\subset B_{g(t)}(x_0,1+R\sqrt{t})
\text{ for all }t\in [0,\thatwasT ]\cap[0,\twasS ),
\eeq
thanks to \eqref{weird_restriction}.
Finally, by reducing $\thatwasT $ still further, part 3 of Lemma \ref{DCL3} will apply with $r=2$ and $K=1$ to tell us that for any $x\in B_{g(0)}(x_0,2)$, and $s\in [0,6]$, we have
$B_{g(0)}(x,s)\supset B_{g(t)}\left(\textstyle{x,s-R\sqrt{t}}\right)$. By \eqref{1in2}, this implies
\beq
\label{nestedRsqrttballs}
B_{g(0)}(x,5R\sqrt{t})\supset B_{g(t)}(x,4R\sqrt{t})\qquad
\quad\text{for all}\quad x\in B_{g(t)}(x_0,1),
\eeq
and all $t\in [0,\thatwasT ]\cap[0,\twasS )$.

\cmt{we need the full radius 10 control below}

Suppose our equivalent version of Lemma \ref{volume_control_lem} is false with the choice of $\ep_0$ in \eqref{ep0choice} and with $\thatwasT $ satisfying the constraints above. Then we can fix
$t \in [0,\thatwasT ]\cap[0,\twasS )$ so that $\VolB_{g(t)}(x_0,1)  = \ep_0$.
This choice of $t$  determines a scale for a covering of $B_{g(t)}(x_0,1)$ by balls as we now explain.
Let $N$ be the largest number of 
disjoint balls $B_{g(t)}(p_i,R\sqrt{t})\subset B_{g(t)}(x_0,1)$ that can be chosen.
Note that 
we can be sure that at least one ball exists, i.e. that $N\geq 1$, because we are assuming that $t\leq \frac{1}{25R^2}<\frac{1}{R^2}$. 
Moreover, $N$ is finite because by assumption we know that $B_{g(t)}(x_0,1)\subset\subset M$, so an infinite sequence $p_i$ would have to have a convergent subsequence.
In the following we will estimate $N$ from above explicitly.
Fix now such a choice of points/centres $p_1,\ldots,p_N$, and the corresponding disjoint balls.
Using the disjointness, and 
Lemma \ref{root_t_vol_lemma} we see that
$\VolB_{g(t)}(p_i,R\sqrt{t})\geq \tilde \ep_0(R\sqrt{t})^n $, 
and hence
\beq
\label{Nupperbd}
\ep_0=\VolB_{g(t)}(x_0,1)  \geq 
\Vol_{g(t)}( \cup_{i=1}^{N}   B_{g(t)} (p_i, R\sqrt{t})   ) 
\geq N \tilde\ep_0 (R\sqrt{t})^n .
\eeq

Whereas we have $B_{g(t)}(p_i,R\sqrt{t})\subset B_{g(t)}(x_0,1)$, 
we can be sure that the larger balls $B_{g(t)}(p_i,3R\sqrt{t})$ \emph{cover} $B_{g(t)}(x_0,1)$
because if there existed a point $y\in B_{g(t)}(x_0,1)$ outside all $N$ of these larger balls, then we could contradict the definition of $N$ as follows.
The ball $B_{g(t)}(y,2R\sqrt{t})$ would be disjoint from all the original balls 
$B_{g(t)}(p_i,R\sqrt{t})$.
If $y$ were  within $B_{g(t)}(x_0,1-R\sqrt{t})$, 
then we could add $y$ to our original list of points $p_1,\ldots,p_N$, i.e. add $B_{g(t)}(y,R\sqrt{t})$ to our original collection of balls, thus contradicting the choice of $N$ immediately. If instead $y\notin B_{g(t)}(x_0,1-R\sqrt{t})$ then this new ball might fall partly outside of $B_{g(t)}(x_0,1)$, which is not allowed. In this latter case, we could then slide $y$ a distance $R\sqrt{t}$ along any minimising geodesic from $y$ to $x_0$, to a new point $y_0\in B_{g(t)}(x_0,1-R\sqrt{t})$.
(Note that this minimising geodesic is longer than $R\sqrt{t}$ because we have chosen $\thatwasT \leq \frac{1}{25R^2}<\frac{1}{4R^2}$, so that $1-R\sqrt{t}>R\sqrt{t}$.) 
Then
$B_{g(t)}(y_0,R\sqrt{t})$ would be a ball lying within $B_{g(t)}(x_0,1)$ that is still disjoint from all the balls $B_{g(t)}(p_i,R\sqrt{t})$, thus contradicting the definition of $N$ also in this case.

The fact that the balls $B_{g(t)}(p_i,3R\sqrt{t})$ cover $B_{g(t)}(x_0,1)$ is a fact that has an analogue at $t=0$. Indeed, we claim that the balls $B_{g(0)}(p_i,5R\sqrt{t})$ will cover
$B_{g(0)}(x_0,1)$. To see this, first note that 
the larger balls $B_{g(t)}(p_i,4R\sqrt{t})$ cover $B_{g(t)}(x_0,1+R\sqrt{t})$, 
and thus by \eqref{EBLapp}, they also cover $B_{g(0)}(x_0,1)$.
By \eqref{nestedRsqrttballs}, the balls $B_{g(0)}(p_i,5R\sqrt{t})$ then also cover
$B_{g(0)}(x_0,1)$ as required.

This covering gives us a lower bound on $N$ as follows. By our assumption $\thatwasT \leq \frac{1}{25R^2}$, the radius $5R\sqrt{t}$ is no more than $1$. Therefore, by Bishop-Gromov, each of these balls has volume no more than 
$\Om_n (5R\sqrt{t})^n $, where $\Om_n $ has already been taken to be the volume of a ball of the maximal radius $1$ in the model space with $\Ric\equiv-1$. Therefore we have 
\beq
\label{Nlowerbd}
v_0\leq \VolB_{g(0)}(x_0,1)\leq \sum_{i=1}^N \VolB_{g(0)}(p_i,5R\sqrt{t})
\leq N\Om_n (5R\sqrt{t})^n .
\eeq
Combining with the upper bound \eqref{Nupperbd} for $N$, we find that
$$\ep_0\geq \frac{\tilde\ep_0 v_0}{5^n \Om_n }$$
which contradicts the value of $\ep_0$ chosen in \eqref{ep0choice} at the beginning.
\end{proof}

It remains to prove the key supporting lemma \ref{root_t_vol_lemma}. First we give a hint of the intuition behind the proof in the case $A=1$. If the lemma were false, then even for extremely small $\ti\ep_0>0$, we would be able to find Ricci flows for which the volume of at least one $\sqrt{t}$-ball was very small after a very short time. Parabolically rescaling, we would have a Ricci flow that would have virtually nonnegative Ricci curvature, and in which there would be a unit ball with very small volume after time $1$. The $c_0/t$ decay of curvature would survive the rescaling, and this can be turned into control on the changes of distances thanks to Lemma \ref{DCL3}.

The main task is to play off the consequence of the lower volume bound at time $t=0$ against the smallness of volume at time $1$. The smallness of volume at $t=1$ will allow us to make a covering of a unit ball at time $1$ by a relatively small number of geodesic balls. Meanwhile, our control on the change of distances allows us to transfer this back to a covering of a unit ball at time $t=0$ by a relatively small number of balls, each of which has a controlled amount of volume as a result of the almost nonnegative Ricci curvature. This can be made to  contradict the $t=0$ lower volume control.

A key subtlety that this brief discussion overlooks is how we go from smallness of the volume of a ball at $t=1$ to having a covering by a small number of smaller balls. To make this work, we need more control on the \emph{geometry} of the set we are covering, and this we can only obtain by considering a different ball at $t=1$. 
The key idea is that the property of \emph{not} having a ball with a covering by a small number of balls is something that we can apply at spatial infinity where the manifold splits. This property then passes to an equivalent space of one lower dimension. By iteration, we can reduce the discussion to one dimension, where a contradiction is apparent. The key lemma that articulates this is the following, which we prove at the end of the section.

\begin{lemma}[Dimension reduction]
\label{geometryatinfinity} 
Let $R\geq 1$ and $  \ep,  \de,   v >0$ be arbitrary, and let $(M^n,g)$ be a smooth $n$-dimensional complete, non-compact Riemannian manifold without boundary, with $n\geq 2$, such that 
\begin{compactenum}[(i)]
\item
\label{DRHi}
$\Ric_g \geq 0$, and the norm of the curvature and all covariant derivatives (of any order) is bounded, i.e. $|\grad^k \Rm|_g\leq C(k)$ for each $k\in 0,1,\ldots$;
\item
\label{DRHii}
$\VolB_{g}(x,1) \geq \de$  for all $x \in M$;
\item
\label{DRHiii}
If we have a ball $B_{g}(y,L)$, for $y\in M$ and $L>0$, that is covered by $N$  
balls of radius $r\geq R$ (with respect to $g$) then we must have $N\geq   v L^n r^{-n}$;
\item
\label{DRHiv}
There exists $y_\infty \in M$ such that $\VolB_{g}(y_{\infty},R) \leq    \ep R^n $.
\end{compactenum}
Then there exists a sequence of points $z_k\in M$
with $d_{g}(y_\infty, z_k)\to\infty$ as $k\to\infty$, such that
$(M,g,z_k)$ converges in the smooth Cheeger-Gromov sense to the product manifold $\R\times (M',g')$ marked with some point $(0,y')$, $y'\in M'$, where $(M',g')$ is a smooth $(n-1)$-dimensional complete, non-compact Riemannian manifold without boundary such that
\begin{compactenum}
\item
\label{DRC1}
$\Ric_{g'} \geq 0$, and the norm of the curvature and all covariant derivatives (of any order) is bounded, i.e. $|\grad^k \Rm|_{g'}\leq C'(k)$ for each $k\in 0,1,\ldots$;
\item
\label{DRC2}
$\VolB_{g'}(x,1) \geq \de/2$  for all $x \in M'$;
\item
\label{DRC3}
If we have a ball $B_{g'}(y,L)$, for $y\in M'$ and $L>0$, that is covered by $N$  
balls of radius $r\geq R$ (with respect to $g'$) then we must have 
$N\geq \eta_1  v L^{n-1} r^{-(n-1)}$, for some $\eta_1=\eta_1(n)>0$;
\item
\label{DRC4}
There exists $y_\infty' \in M'$ such that 
$\VolB_{g'}(y_{\infty}',R) \leq  C(n)\frac{\ep}{  v} R^{n-1} $.
\end{compactenum}
\end{lemma}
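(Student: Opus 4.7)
The plan is to take a smooth pointed Cheeger--Gromov limit of $(M,g)$ at infinity, produce a product structure via the Cheeger--Gromoll splitting theorem, and transfer the four conclusions by combining the limit with the product decomposition.

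\emph{Finding the splitting limit.} Since $(M,g)$ is complete and non-compact, it admits a unit-speed ray $\ga\colon[0,\infty)\to M$ starting at $y_\infty$, and we set $z_k:=\ga(k)$. The segment $\ga|_{[0,2k]}$ is then a minimising geodesic of length $2k$ with $z_k$ at its midpoint. Hypothesis~(i) supplies uniform bounds on curvature and all covariant derivatives, while (ii) together with $\Ric\geq 0$ yields a uniform lower injectivity-radius bound via Cheeger's lemma, so Cheeger--Gromov compactness furnishes a smooth subsequential pointed limit $(M,g,z_k)\to(M_\infty,g_\infty,z_\infty)$. The rebased segments $\ga(k+\,\cdot\,)$ converge to a unit-speed line through $z_\infty$, and since $\Ric_{g_\infty}\geq 0$, the Cheeger--Gromoll splitting theorem produces an isometry $M_\infty=\R\times(M',g')$ with $z_\infty=(0,y')$ for some smooth complete $(n-1)$-dimensional $(M',g')$.

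\emph{Properties \textup{(DRC1)--(DRC3)}.} Conclusion (DRC1) follows from smoothness of the convergence together with the product structure. For (DRC2), every $p\in M_\infty$ is a limit of points in $M$ satisfying (ii), so $\VolB_{g_\infty}(p,1)\geq\de$; the product estimate $\VolB_{g_\infty}((0,y),1)=\int_{-1}^{1}\VolB_{g'}(y,\sqrt{1-s^{2}})\,ds\leq 2\VolB_{g'}(y,1)$ then yields $\VolB_{g'}(y,1)\geq\de/2$ for every $y\in M'$. For (DRC3), hypothesis (iii) should first be pushed to $M_\infty$ by approximating any cover of $B_{g_\infty}(p,L)$ via the Cheeger--Gromov diffeomorphisms and invoking (iii) in $(M,g)$ with $L$ and $r$ perturbed by $o(1)$. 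Then, given a cover of $B_{g'}(y,L)$ by $N$ balls of radius $r\geq R$, slice $[-L,L]$ into about $L/r+1$ intervals of length $r$ and take the product cover to obtain a cover of $B_{g_\infty}((0,y),L)$ by $(L/r+1)N$ balls of radius at most $2r$; the limit form of (iii), applied to this cover, and a short computation then give $N\geq \eta_1(n)\,v L^{n-1}r^{-(n-1)}$.

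\emph{Property \textup{(DRC4)} --- the main obstacle.} The delicate point is that we only know smallness of volume at $y_\infty$, yet the limit is based at $z_\infty$, which comes from points running off to infinity. My plan is to observe first that by Bishop--Gromov monotonicity, (iv) implies $\VolB_g(y_\infty,r)/r^n\leq\ep$ for all $r\geq R$, so the asymptotic volume ratio of $(M,g)$ is at most $\ep$; this ratio is basepoint-independent for $\Ric\geq 0$ manifolds, hence the asymptotic volume ratio of $(M_\infty,g_\infty)$ at $z_\infty$ is also at most $\ep$, and for $L$ sufficiently large one has $\VolB_{g_\infty}(z_\infty,L)\leq 2\ep L^n$. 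Then I will run a Vitali-type averaging: take a maximal $R/2$-separated set $\{p_i\}\subset B_{g_\infty}(z_\infty,L)$, so that $\{B_{g_\infty}(p_i,R/2)\}$ is disjoint and contained in $B_{g_\infty}(z_\infty,L+R/2)$, while $\{B_{g_\infty}(p_i,R)\}$ covers $B_{g_\infty}(z_\infty,L)$. The limit form of (iii) forces at least $vL^nR^{-n}$ such balls, while the disjointness caps the sum $\sum_i\VolB_{g_\infty}(p_i,R/2)$ by $2\ep(L+R/2)^n$; averaging, then using Bishop--Gromov to pass from radius $R/2$ to $R$, produces a centre $p_*=(t_*,y_*)\in M_\infty$ with $\VolB_{g_\infty}(p_*,R)\leq C(n)\ep R^n/v$. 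Finally, the product identity $\VolB_{g_\infty}((0,y_*),R)=\int_{-R}^{R}\VolB_{g'}(y_*,\sqrt{R^{2}-s^{2}})\,ds$ combined with the slice lower bound $\VolB_{g'}(y_*,\rho)\geq(\rho/R)^{n-1}\VolB_{g'}(y_*,R)$ from Bishop--Gromov in the $(n-1)$-dimensional $M'$ gives $\VolB_{g_\infty}((0,y_*),R)\geq c_n R\,\VolB_{g'}(y_*,R)$ for some $c_n>0$; dividing yields $\VolB_{g'}(y_*,R)\leq C(n)\ep R^{n-1}/v$, so $y_\infty':=y_*$ satisfies (DRC4). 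The two points I expect to need most care are verifying cleanly that (iii) passes to $M_\infty$ and tracking the correct powers of $R$ through the Bishop--Gromov comparisons so that the reduced bound has the claimed shape $C(n)\ep R^{n-1}/v$.
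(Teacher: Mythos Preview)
Your treatment of the splitting and of conclusions (DRC1)--(DRC3) is essentially correct and close to the paper's. The real problem is your argument for (DRC4).

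The step ``the asymptotic volume ratio of $(M,g)$ is at most $\ep$; this ratio is basepoint-independent, \emph{hence} the asymptotic volume ratio of $(M_\infty,g_\infty)$ is also at most $\ep$'' is not valid. Basepoint-independence tells you that $\lim_{r\to\infty} r^{-n}\VolB_g(z_k,r)=\mathrm{AVR}(M,g)\leq\ep$ for each fixed $k$; it says nothing about $\lim_{r\to\infty}\lim_{k\to\infty} r^{-n}\VolB_g(z_k,r)$, which is what governs the limit space. A concrete counterexample: take a smooth surface of revolution with $K\geq 0$ asymptotic to a flat cone of cone angle $2\pi\alpha$, so $\mathrm{AVR}(M)=\alpha\pi$. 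Choosing $z_k=\gamma(k)$ on a ray from the tip, the circles through $z_k$ have length $\sim 2\pi\alpha k\to\infty$, and $(M,g,z_k)\to\R^2$, which has volume ratio $\pi$ at every scale. Thus $\VolB_{g_\infty}(z_\infty,L)=\pi L^2$, and your bound $\VolB_{g_\infty}(z_\infty,L)\leq 2\ep L^n$ fails whenever $\alpha<\tfrac12$. Your Vitali averaging then collapses: without an $\ep$ in the numerator you can only feed in Bishop's bound $\om_n L^n$, and the output loses all dependence on $\ep$.

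The paper handles (DRC4) by choosing the sequence $z_k$ \emph{before} taking the limit, not after. One argues by contradiction: if every point on a ray from $y_\infty$ at distance $\geq\tilde R$ had $\VolB_g(\cdot,R)\geq\sigma_0:=(100R)^n\ep/v$, then a packing/covering count in the annulus $B_g(y_\infty,3\tilde R)\setminus B_g(y_\infty,\tilde R)$ --- using (iv) via Bishop--Gromov for an upper volume bound and (iii) for a lower bound on the number of covering balls --- yields a contradiction. This produces points $z_k$ on rays with $d_g(y_\infty,z_k)\to\infty$ and $\VolB_g(z_k,R)\leq C(n)\tfrac{\ep}{v}R^n$ already in $(M,g)$. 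Now the small-volume condition passes directly to $(M_\infty,g_\infty)$ at $z_\infty$, and the product structure gives (DRC4). In short, the missing idea is that the $\ep$-smallness has to be located at the moving basepoints in $M$ itself; it cannot be recovered after passing to the limit. (You also omit the check that $M'$ is non-compact, but this follows immediately from (DRC3) by covering with a single ball and letting $L\to\infty$.)
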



\cmt{slightly bizarre how the conclusion \eqref{DRC4} degenerates as $v$ gets small.}

\begin{proof}[Proof of Lemma \ref{root_t_vol_lemma}]
As before, let $\Om_n $ be the volume of the unit ball in the $n$-dimensional model space with $\Ric\equiv-1$.
By Bishop-Gromov, the hypotheses are vacuous unless 
\beq
\label{v0upperbd}
v_0\leq \Om_n ,
\eeq
and so we assume that throughout the proof. 

Assume that Lemma \ref{root_t_vol_lemma} fails for some $v_0>0$ and dimension $n$ that we now fix for the remainder of the proof. In this case, if we take any sequence $\tilde\ep_{\ell}\downto 0$, then for each $\ell$, there exist $c_{\ell}<\infty$ and $A_\ell\geq 1$ so that the conclusion \eqref{vol_lower_conc} of the lemma (with $\ti\ep_0=\ti\ep_{\ell}$) will fail in an arbitrarily short time, for a Ricci flow satisfying the hypotheses of the lemma with $c_0=c_{\ell}$ and
$A=A_\ell$. 
It will be convenient, and no loss of generality, to assume that $c_{\ell}\geq 1$.
By omitting a finite number of terms in the sequence, 
we may assume that $\tilde\ep_{\ell}$ is less than the volume $\om_n $ of the unit ball in Euclidean $n$-space, and thus for any Ricci flow the conclusion \eqref{vol_lower_conc} will hold for sufficiently small $t>0$; our assumption is therefore that the length of the time interval is not controlled uniformly from below.

By assumption then, for each $\ell$ there exist a sequence of $n$-manifolds $M_j$, a sequence of points $x_j\in M_j$, a sequence of Ricci flows $\ti g_j(t)$ on $M_j$ for $t\in [0,\ti t_j]$, with $\ti t_j\downto 0$ (in particular, we may assume $\ti t_j\in (0,A_\ell^{-2}]$)  and a sequence of points $\ti y_j\in B_{\ti g_j(\ti t_j)}(x_j,1)\subset M_j$ such that


\begin{compactenum}[\ (i)]
\item
\label{propi1}
$B_{\ti g_j(t)}(x_j,10)$ 
is compactly contained in $M_j$ for all $t \in [0,\ti t_j]$
\item 
\label{lower_ric_orig1}
$\Ric_{\ti g_j(t)} \geq -1$ on $B_{\ti g_j(t)}(x_j,10)$ for all $t \in [0,\ti t_j]$
\item 
\label{propiii1}
$|\Rm|_{\ti g_j(t)} \leq \frac{c_{\ell}}{t} $    on
$ B_{\ti g_j(t)}(x_j,10)$ for all $t \in (0,\ti t_j]$
\item
\label{v0prop1}
$\VolB_{\ti g_j(0)}(x_j,1) \geq v_0$
\item
\label{propertyv}
$\VolB_{\ti g_j(\ti t_j)}(\ti y_j, A_\ell \sqrt{\ti t_j})  < \ti \ep_{\ell}(A_\ell \sqrt {\ti t_j})^n $.
\end{compactenum}

We need to improve our $\ti t_j$ and $\ti y_j$ so that in some sense we are considering
the first time when the volume is too small, and the volume is not much worse nearby. More precisely, with $\be$ as in Lemma \ref{shrinking_balls_lemma}, 
for each $\ell$, $j$,
choose $L$ so that $B_{\ti g_j(\ti t_j)}(x_j,2-(L+1)\be\sqrt{c_{\ell} \ti t_j})
= B_{\ti g_j(\ti t_j)}(x_j,1)$, i.e. so that 
$(L+1)\be\sqrt{c_{\ell} \ti t_j}=1$, which implies 
\beq
\label{Lest}
L\geq \half\be^{-1}(c_{\ell} \ti t_j)^{-1/2},
\eeq
after possibly deleting finitely many terms in $j$ so that $c_{\ell}\ti t_j$ is small enough.
Now choose $t_j\in (0,\ti t_j]$ to be the first time at which 
there exists a point $y_j\in \overline{B_{\ti g_j(t_j)}(x_j,2-(L+1)\be\sqrt{c_{\ell} t_j})}$
so that 
\beq
\label{vol_equality}
\VolB_{\ti g_j(t_j)}(y_j, A_\ell \sqrt{t_j}) = \ti \ep_{\ell}(A_\ell \sqrt {t_j})^n .
\eeq
We have picked $y_j \in B_{\ti g_j(t_j)}(x_j,2)$, and so
by deleting finitely many terms in $j$ so that $t_j$ is small enough, we have 
\beq
\label{where_is_yj}
y_j\in B_{\ti g_j(t_j)}(x_j,2)\subset B_{\ti g_j(t)}(x_j,3)\text{ for each }t\in [0,t_j],
\eeq
by the shrinking balls corollary \ref{shrinking_balls_cor}.
Therefore 
\beq
\label{yjxj}
B_{\ti g_j(t)}(y_j,7)\subset B_{\ti g_j(t)}(x_j,10)
\text{ for each }t\in [0,t_j],
\eeq
where our curvature bounds \eqref{lower_ric_orig1} and \eqref{propiii1} hold.

Since $t_j$ is the first time for which \eqref{vol_equality} holds, we have
\beq
\label{nice_start_lower_vol_bd}
\VolB_{\ti g_j(t)}(y, A_\ell \sqrt t)  \geq \ti \ep_{\ell}(A_\ell \sqrt t)^n  \text{ for all }t\in [0,t_j]\text{ and all }y\in B_{\ti g_j(t)}(x_j,2-(L+1)\be\sqrt{c_{\ell} t}).
\eeq
By Bishop-Gromov, we see that 
\beq
\label{nice_lower_vol_bd}
\VolB_{\ti g_j(t)}(y, \sqrt t)  \geq \frac{3\ti \ep_{\ell}}{4}(\sqrt t)^n  \text{ for all }t\in [0,t_j] \text{ and all }y\in B_{\ti g_j(t)}(x_j,2-(L+1)\be\sqrt{c_{\ell} t})
\eeq
after possibly deleting finitely many terms in $j$ so that $A_\ell {\sqrt { t_j}}$ is small enough.
Now we need to find a controlled space-time region centred at $y_j$ in which a similar
lower volume bound as in \eqref{nice_lower_vol_bd} holds.
By the prior inclusion lemma \ref{sharks_fin_lemma}, for each $\al\in (0,1)$,
the space-time `cylinder' defined for $t\in [0,\al^2 t_j]$ and $y\in B_{\ti g_j(t)}(y_j,L(1-\al)\be\sqrt{c_{\ell} t_j})$ fits within this region where \eqref{nice_lower_vol_bd} holds.
Thus using \eqref{Lest} and choosing $\al$ so that $1-\al=\eta_1c_\ell^{-1}\in (0,\half)$ for $\eta_1\in (0,\half)$ to be chosen later to be sufficiently small depending only on $n$ (with hindsight, $\eta_1=[100n(n-1)]^{-1}$ would be fine) we have 
\beq
\label{late_addition}
\VolB_{\ti g_j(t)}(y, \sqrt t)  \geq \frac{3\ti \ep_{\ell}}{4}(\sqrt t)^n  \text{ for all }t\in [0,\al^2 t_j] \text{ and all }y\in B_{\ti g_j(t)}\left(y_j,
\frac{\eta_1}{2}c_\ell^{-1}\sqrt{t_j /\ti t_j}\right).
\eeq
We need to extend the space-time region where we get such a volume ratio bound from existing for $t\in [0,\al^2 t_j]$ to existing for $t\in [0,t_j]$ (even if the region becomes thinner). 
More precisely, we claim that
\beq
\label{even_better_vol_est}
\VolB_{\ti g_j(t)}(y, \sqrt t)\geq \frac{\ti \ep_{\ell}}{2^{n+1} }(\sqrt{t})^n 
\text{ for }t\in [0,t_j]\text{ and }y\in B_{\ti g_j(t)}\left(y_j,
\frac{\eta_1}{4}c_{\ell}^{-1}\sqrt{t_j/\ti t_j}\right),
\eeq
and emphasise that we may assume that $t\in [\al^2 t_j,t_j]$.
We can simplify the region where $y$ lies in \eqref{even_better_vol_est} by making it independent of $t$ with the claim that 
\beq
\label{another_nest}
B_{\ti g_j(\al^2 t_j)}(y_j,\frac{\eta_1}{2}c_{\ell}^{-1}\sqrt{t_j/\ti t_j})
\supset
B_{\ti g_j(t)}(y_j,\frac{\eta_1}{4}c_{\ell}^{-1}\sqrt{t_j/\ti t_j}),
\eeq
for $t\in [\al^2 t_j,t_j]$, after deleting finitely many terms in $j$ so that $\ti t_j$ is small enough.
To see this, we can apply the second part of the shrinking balls corollary \ref{shrinking_balls_cor}, which tells us that
$$B_{\ti g_j(\al^2 t_j)}(y_j,r-\be\sqrt{c_\ell \al^2t_j})
\supset
B_{\ti g_j(t)}(y_j,r-\be\sqrt{c_\ell t}),$$
for $r\leq 7$. Setting $r=\be \al\sqrt{c_\ell t_j}+\frac{\eta_1}{2}c_\ell^{-1}\sqrt{t_j/\ti t_j}$ (which is less than $7$ after deleting finitely many terms in $j$ so that $t_j$ is sufficiently small) this implies
$$B_{\ti g_j(\al^2 t_j)}(y_j,\frac{\eta_1}{2}c_\ell^{-1}\sqrt{t_j/\ti t_j})
\supset
B_{\ti g_j(t)}(y_j,\be \al\sqrt{c_\ell t_j}-\be\sqrt{c_\ell t}+\frac{\eta_1}{2}c_\ell^{-1}\sqrt{t_j/\ti t_j}),$$
but we can estimate 
$$\be \al\sqrt{c_\ell t_j}-\be\sqrt{c_\ell t}\geq
-\be (1-\al)\sqrt{c_\ell t_j}
=-\be \eta_1\sqrt{t_j/c_\ell}
\geq -\frac{\eta_1}{4}c_\ell^{-1}\sqrt{t_j/\ti t_j})
$$
after deleting enough terms in $j$ so $\ti t_j$ is small enough. Thus \eqref{another_nest} is proved, and \eqref{even_better_vol_est} reduces to the claim that
\beq
\label{kanga}
\VolB_{\ti g_j(t)}(y, \sqrt t)\geq \frac{\ti \ep_{\ell}}{2^{n+1} }(\sqrt{t})^n 
\text{ for }t\in [\al^2 t_j,t_j]\text{ and }y\in 
B_{\ti g_j(\al^2 t_j)}(y_j,\frac{\eta_1}{2}c_{\ell}^{-1}\sqrt{t_j/\ti t_j}).
\eeq
This is a claim about the volume of a ball that is dependent on $t$, and we can avoid this dependency with the claim that
\beq
\label{nice_nesting}
B_{\ti g_j(\al^2 t_j)}(y, \al\sqrt{t_j})\subset B_{\ti g_j(t)}(y, \sqrt{t})
\eeq
after deleting finitely many terms in $j$ (so $t_j$ is sufficiently small,
depending on $\al$).
To see \eqref{nice_nesting} holds, 
we apply part 1 of Lemma \ref{DCL3} to deduce that
$$B_{\ti g_j(\al^2 t_j)}(y, \al\sqrt{t_j})\subset B_{\ti g_j(t)}(y, 
\al\sqrt{t_j}e^{t-\al^2t_j}),$$
(after deleting finitely many terms in $j$)
and therefore to establish \eqref{nice_nesting} we must show that 
$\al(\sqrt{t_j/t})e^{t-\al^2t_j}\leq 1$, or equivalently
$\half\log (\al^2t_j/t)+t-\al^2 t_j\leq 0$. But $\log x\leq x-1$ for any $x\geq 0$, 
so 
$$\half\log (\al^2t_j/t)+t-\al^2 t_j\leq \half(\al^2t_j/t-1)+t-\al^2 t_j
= (t-\al^2 t_j)(1-\frac{1}{2t})\leq 0$$
provided $t_j\leq \half$ (so $1-\frac{1}{2t}\leq 0$) and we confirm that \eqref{nice_nesting}
holds.
Thus \eqref{kanga}, and hence \eqref{even_better_vol_est}, reduce to the claim that
\beqa
\label{kanga2}
\lefteqn{\Vol_{\ti g_j(t)}B_{\ti g_j(\al^2 t_j)}(y, \al\sqrt{t_j})
\geq \frac{\ti \ep_{\ell}}{2^{n+1} }(\sqrt{t})^n} \qquad& \\
&\text{ for }t\in [\al^2 t_j,t_j]\text{ and }y\in 
B_{\ti g_j(\al^2 t_j)}(y_j,\frac{\eta_1}{2}c_{\ell}^{-1}\sqrt{t_j/\ti t_j}).
\eeqa
This we already know for $t=\al^2t_j$ by \eqref{late_addition}.
The curvature upper bound $|\Rm|_{\ti g_j(t)}\leq c_{\ell}/t$ will prevent the volume 
considered in \eqref{kanga2} from dropping too much over short times. 
More precisely, for $y\in B_{\ti g_j(\al^2 t_j)}(y_j,
\frac{\eta_1}{2}c_{\ell}^{-1}\sqrt{t_j/\ti t_j}
)$ and $t\in [\al^2 t_j,t_j]$,
we can estimate
\beqa
\label{ddtvol}
\frac{d}{dt}\Vol_{\ti g_j(t)}B_{\ti g_j(\al^2 t_j)}(y, \al\sqrt{ t_j}) &= -\int_{B_{\ti g_j(\al^2 t_j)}(y, \al\sqrt{ t_j})}\Sc_{\ti g_j(t)} dV_{\ti g_j(t)}\\
&\geq -C\frac{c_{\ell}}{t}\Vol_{\ti g_j(t)}B_{\ti g_j(\al^2 t_j)}(y, \al\sqrt{ t_j}),
\eeqa
for  $C=C(n)$.
Note that for this to be valid, we need that the domain of integration $B_{\ti g_j(\al^2 t_j)}(y, \al\sqrt{ t_j})$ lies within the ball $B_{\ti g_j(t)}(y_j,7)$ where we have curvature estimates (by \eqref{yjxj}); this inclusion follows by part 1 of Lemma \ref{DCL3}, for example.
Integrating \eqref{ddtvol} from $\al^2 t_j$ to $t\in (\al^2 t_j,t_j]$, keeping in mind that
$\al\in (1/2,1)$, we find that
$$\left[
\log\Vol_{\ti g_j(t)}B_{\ti g_j(\al^2 t_j)}(y, \al \sqrt{t_j})
\right]_{\al^2 t_j}^{t}
\geq
-Cc_{\ell}\log\al^{-2}
\geq
-Cc_{\ell}(1-\al)\geq -C\eta_1\geq \log \frac34,$$
where we allow the  constant $C$ to change at each instance, and finally choose $\eta_1$ small enough to achieve the last inequality.
By \eqref{nice_lower_vol_bd} in the case $t=\al^2 t_j$ this implies
\beq
\label{prelim_vol_est}
\Vol_{\ti g_j(t)}B_{\ti g_j(\al^2 t_j)}(y, \al\sqrt{t_j})\geq \frac{3}{4}\VolB_{\ti g_j(\al^2 t_j)}(y, \al\sqrt{t_j})
\geq (\frac{3}{4})^2 \ti \ep_{\ell}(\al\sqrt{t_j})^n 
\geq \frac{\ti \ep_{\ell}}{2^{n+1} }(\sqrt{t})^n ,
\eeq
which is claim \eqref{kanga2}.
Thus we have established \eqref{even_better_vol_est}, 
which can be considered a development of property \eqref{propertyv}.

\cmt{After blowing up, the radius of the ball where $y$ lives will become like $c_{\ell}^{-1}\ti t_j^{-1/2}$.}

\cmt{below we're using radius $9$ - almost $10$}

Finally we turn to Property \eqref{v0prop1}, and turn it into a more useful statement about the volume of smaller balls, with more general centres. Indeed, we see that
for all $y\in B_{\ti g_j(0)}(x_j,4)$, we have
$$\VolB_{\ti g_j(0)}(y,5)\geq \VolB_{\ti g_j(0)}(x_j,1)\geq v_0,$$
and so by Bishop-Gromov we have
\beq
\label{eta0ineq}
\VolB_{\ti g_j(0)}(y,r)\geq \eta_0 v_0 r^n ,
\eeq
for all $r\in (0,1)$, say, and  $\eta_0=\eta_0(n)>0$.
By the $t=0$ case of \eqref{where_is_yj}, the estimate
\eqref{eta0ineq} holds in particular for all $y\in B_{\ti g_j(0)}(y_j,1)$.

To summarise the above discussions, we can change our viewpoint from $x_j$ to $y_j$
and have the properties
\begin{compactenum}[\ (A)]
\item
\label{propi2}
$B_{\ti g_j(t)}(y_j,7)$ 
is compactly contained in $M_j$ for all $t \in [0,t_j]$
\item 
\label{lower_ric_orig2}
$\Ric_{\ti g_j(t)} \geq -1$ on $B_{\ti g_j(t)}(y_j,7)$ for all $t \in [0,t_j]$
\item 
\label{propiii2}
$|\Rm|_{\ti g_j(t)} \leq \frac{c_{\ell}}{t} $    on
$ B_{\ti g_j(t)}(y_j,7)$ for all $t \in (0,t_j]$
\item
\label{v0prop2}
$\VolB_{\ti g_j(0)}(y,r)\geq \eta_0 v_0 r^n $ for all $r\in (0,1)$ and
$y\in B_{\ti g_j(0)}(y_j,1)$, for  $\eta_0=\eta_0(n)>0$
\item
$\VolB_{\ti g_j(t_j)}(y_j, A_\ell \sqrt{t_j})  = \ti \ep_{\ell}(A_\ell \sqrt {t_j})^n $
\item
\label{propvi2}
$\VolB_{\ti g_j(t)}(y, \sqrt t)  \geq \frac{\ti\ep_{\ell}}{2^{n+1} }(\sqrt t)^n $ for all $t\in [0,t_j]$ and all $y\in B_{\ti g_j(t)}(y_j,\frac{\eta_1}{4}c_{\ell}^{-1}\sqrt{t_j/\ti t_j})$.
\end{compactenum}
By rescaling the Ricci flows parabolically, expanding distances by 
a factor $t_j^{-\half}$ and time by a factor $t_j^{-1}$, we get a new sequence of Ricci flows, which we call $g_j(t)$, defined for $t\in [0,1]$, such that 
$y_j\in B_{g_j(1)}(x_j,2t_j^{-\half})\subset M_j$ and 
\begin{compactenum}[\ (a)]
\item
$B_{g_j(t)}(y_j,7t_j^{-\half})$ 
is compactly contained in $M_j$ for all $t \in [0,1]$
\item 
\label{ric_prop_gj}
$\Ric_{g_j(t)} \geq -t_j$ on $B_{g_j(t)}(y_j,7t_j^{-\half})$ for all $t \in [0,1]$
\item $|\Rm|_{g_j(t)} \leq \frac{c_{\ell}}{t} $    on
$ B_{g_j(t)}(y_j,7t_j^{-\half})$ for all $t \in (0,1]$
\item
\label{eta0prop}
$\VolB_{g_j(0)}(y,r) \geq \eta_0 v_0 r^n $
for all $r\in (0,t_j^{-\half})$ and 
$y\in B_{g_j(0)}(y_j,t_j^{-\half})$
\item
$\VolB_{g_j(1)}(y_j, A_\ell )  = \ti \ep_{\ell}A_\ell^n $
\item
$\VolB_{g_j(t)}(y, \sqrt t)  \geq \frac{\ti \ep_{\ell}}{2^{n+1} }(\sqrt t)^n $ for all $t\in [0,1]$
and all $y\in B_{g_j(t)}(y_j,\frac{\eta_1}{4}c_{\ell}^{-1}{\ti t_j}^{\,-\half})$.
\end{compactenum}

After passing to a subsequence in $j$, we can extract a smooth complete pointed limit Ricci flow
$(M_j,g_j(t),y_j)\to (M,g(t),y_\infty)$ for $t\in (0,1]$ by Hamilton's compactness theorem.


Moreover, $M$ cannot be compact. If it were, then all $M_j$ would be compact 
for sufficiently large $j$. By the lower volume bound at $t=0$ (Property \eqref{v0prop1} above), and the Ricci lower bound (Property \eqref{lower_ric_orig1}), the diameter of $(M_j,\ti g_j(0))$
is uniformly bounded below (independent of $j$). 
Corollary \ref{shrinking_balls_cor} then gives us a lower bound for the diameter of
$(M_j,\ti g_j(t))$ for $t\in [0,t_j]$, once $t_j$ is sufficiently small.
Therefore, the diameter of $(M_j,g_j(t))$ for $t\in [0,1]$
must blow up, and the limit cannot be compact.

This limit $g(t)$ clearly has the properties
\begin{compactenum}[\ (1)]
\item $\Ric_{g(t)} \geq 0$ for all $t \in (0,1]$
\item $|\Rm|_{g(t)} \leq \frac{c_{\ell}}{t} $  for all $t \in (0,1]$
\item
$\VolB_{g(1)}(y_\infty, A_\ell )  = \ti \ep_{\ell}A_\ell ^n $
\item
\label{prop4}
$\VolB_{g(t)}(y, \sqrt t)  \geq \frac{\ti\ep_{\ell}}{2^{n+1} }(\sqrt t)^n $ for all $t\in (0,1]$
and all $y\in M$. 
\end{compactenum}


We can also deduce some further useful properties. First, by Bishop-Gromov, we see that
\beq
\label{g1volgrowth}
\VolB_{g(1)}(y_\infty, r)  \leq  \ti \ep_{\ell} r^n , \text{ for all }r\geq A_\ell .
\eeq

Moreover, although we fail to obtain a smooth limit of the flows $g_j(t)$ at $t=0$ because of the lack of curvature control, we can apply Bishop-Gromov to each $g_j(0)$ and obtain the limiting statement
\beq
\label{jlim1}
\limsup_{j\to\infty}
\VolB_{g_j(0)}(z_j, r)  \leq \om_n  r^n 
\eeq
for all $r>0$, provided $B_{g_j(0)}(z_j, r)$
lies within the region where the $t=0$ instance of the lower Ricci bound of Property \eqref{ric_prop_gj} holds, for sufficiently large $j$ -- for example, if $d_{g_j(0)}(z_j,y_j)$ is uniformly bounded -- where $\om_n $ is the volume of the unit ball in Euclidean space as before.

The principal way in which we extract information from time $t=0$, and Properties \eqref{v0prop1} and \eqref{eta0prop} in particular, is by using them to argue that we cannot cover large balls by too few small balls, even at later times. This principle will be used in the following  claim; to state it, we require 
$R:=\max\{\be\sqrt{c_{\ell}},A_\ell ,1\}$, where $\be$ is the constant given by Lemma 
\ref{shrinking_balls_lemma}, and also Lemma \ref{DCL3}.
Parts 3 and 1 of Lemma \ref{DCL3} tell us that 
%
%
%
for any 
$x\in B_{g_j(0)}(y_j,t_j^{-\half} )$ and $r\in (0,t_j^{-\half} )$ (for example) and sufficiently large $j$, 
we have 
\beq
\label{shrinking_balls_conseq}
B_{g_j(0)}(x,r+R)\supset B_{g_j(1)}(x,r)
\eeq
and 
\beq
\label{expanding_balls_conseq}
B_{g_j(0)}(x,r)\subset B_{g_j(1)}(x,er).
\eeq

\cmt{I insisted $R\geq 1$ as it's required in Lemma \ref{geometryatinfinity} and eventually in the first line of \eqref{Mbound}. We could even ask $r\in (0,2t_j^{-\half} )$ above.}


{\bf Claim 1:}
If we have a ball $B_{g(1)}(y,L)$, for $y\in M$ and $L>0$, that is covered by $N$ 
balls of radius $r\geq R$ (with respect to $g(1)$) then we must have $N\geq \eta v_0 L^n  r^{-n} $ for some  $\eta=\eta(n)>0$.

\begin{proof}[Proof of Claim 1.]
By the smooth convergence of $g_j(t)$ to $g(t)$, we can be sure that for sufficiently large $j$, there exist centres $p^j_1,\ldots,p^j_{N}\in M_j$ so that the larger balls 
$B_{g_j(1)}(p^j_i,(r+R))$ cover some ball $B_{g_j(1)}(\hat y_j,L)$ within $M_j$, where
$\hat y_j$ remains a $j$-independent distance from $y_j$.
By \eqref{expanding_balls_conseq}, 
this latter ball contains the smaller, earlier
$B_{g_j(0)}(\hat y_j,L/e)$, and 
by \eqref{shrinking_balls_conseq}
the balls $B_{g_j(1)}(p^j_i,(r+R))$ are contained within the larger, earlier
$B_{g_j(0)}(p^j_i,(r+2R))$.
Therefore
\beq
\sum_{i=1}^{N} \VolB_{g_j(0)}(p^j_i,(r+2R))
\geq 
\VolB_{g_j(0)}(\hat y_j,L/e)
\geq \eta_0 v_0 \left(\frac{L}{e}\right)^n 
\eeq
by Property \eqref{eta0prop}, for sufficiently large $j$.
Taking the limit $j\to\infty$ and using \eqref{jlim1}, 
we obtain
$$N\om_n (r+2R)^n \geq 
\eta_0 v_0 \left(\frac{L}{e}\right)^n ,
$$
and so for  $\eta=\eta(n)>0$, we conclude
$$N\geq \eta v_0 L^n  r^{-n} .$$
\end{proof}

This claim is the final ingredient allowing us to apply Lemma \ref{geometryatinfinity}
to $(M,g(1))$. The control on the derivatives of the curvature follow from Shi's estimates. Condition \eqref{DRHii} is satisfied with $\de=\frac{\ti \ep_{\ell}}{2^{n+1} }$
by the $t=1$ instance of \eqref{prop4}. The $v$ of that lemma can be taken to be $\eta v_0$ here, thanks to the claim.
Condition \eqref{DRHiv} follows from \eqref{g1volgrowth}, with $\ep=\ti\ep_\ell$.

The output of Lemma \ref{geometryatinfinity} is a noncompact $(n-1)$-dimensional Riemannian manifold $(M',g')$ with the properties \eqref{DRC1} to \eqref{DRC4}, which allow us to immediately apply 
Lemma \ref{geometryatinfinity} again, with $n$ replaced by $n-1$, and this time with
$\de=\frac{\ti \ep_{\ell}}{2^{n+2}}$, $v=\eta_1 \eta v_0$, and 
$\ep=C\frac{\ti\ep_\ell}{v_0}$.
Again, the output is a manifold of dimension reduced by one, to which we can immediately reapply  the lemma. After $n-1$ iterations of this procedure, 
the output is a non-compact \emph{one}-dimensional complete, non-compact Riemannian manifold, i.e. $\R$, for which the volume of an $R$-ball is simply $2R$.
Conclusion \eqref{DRC4} of the final iteration tells us that 
$$2R\leq C\frac{\ti\ep_\ell}{v_0^{n-1}}R$$
for some  $C=C(n)<\infty$, which is a contradiction for sufficiently large $\ell$ so that 
$\ti\ep_\ell$ is sufficiently small.
\end{proof}

It remains to prove our dimension reduction lemma.

\cmt{Some of the arguments in the proof below are very similar to the proof of Lemma \ref{volume_control_lem}}

\begin{proof}[{Proof of Lemma \ref{geometryatinfinity}}]
First note that the set of geodesic rays emanating from $y_\infty$ is nonempty, because $M$ is
noncompact and complete. Indeed, choosing a sequence of unit tangent vectors at $y_\infty$ that exponentiate to minimising geodesics of diverging length, a subsequence converges to a unit tangent vector that exponentiates to a geodesic ray.

For $0\leq K< L<\infty$, consider the annulus
$A_{K,L} :=  B_{g}(y_\infty,L) \setminus B_{g}(y_\infty,K) $ and its nonempty subset
$$\ti A_{K,L} := \{ p \in A_{K,L} \ |\ p \text{ lies within some geodesic ray emanating from }y_\infty\}.$$
We call $\ti A_{K,L}$ the {\it modified annulus}.
Since 
$\Ric \geq 0$, we must have $ \Vol(A_{K,L} - \ti A_{K,L}) \leq \ep(K) L^n$ 
where $\ep:[0,\infty)\to [0,\infty)$ is a decreasing function, depending on $g$, with $\ep(K) \to 0$
as $K \to \infty$.
By Bishop-Gromov, we also have $\Vol(A_{K,L}) \leq \om_n (L^n -K^n)$, where $\om_n$ is the volume of the unit ball in Euclidean $n$-space.

\cmt{probably we already defined $\om_n$}

\cmt{look up visibility angle}

\cmt{KEEP: Small explanation for us: write everything in
geodesic cooordinates at $y_\infty$. 
Let $S_r \in S^{n-1}_1(0)$ be the points
such that $(s,\theta)$ is not a cut point of $0 \in \R^n$ for all $s \leq r$.
Let $\omega(r,\theta)d\theta = \sqrt{\det(g_{\theta_i \theta_j })(r,\theta)}d\theta$ be the volume
form of the metric on $S_1(0)$ at $(r,\theta)$. The Bishop-Gromov
comparison principle says $ \frac{\omega(r,\theta)}{r^{n-1}}  \leq \ti\omega_n $
(the volume element of the unit sphere) and
$ \frac{\omega(r,\theta)}{r^{n-1}}  $ is a decreasing function. We define
$\omega(t,\theta)= 0$ for all $t \geq r$ if $(r,\theta)$ is a cut
point.
We have $S_r \subset  S_s$ for all $r\geq s$. $W := \cap_{r \in \R^+}
S_r$ corresponds to the points that are infinite length minimising
geodesics: $\ga(s) := (s,\theta)$ is a length minimising infinite
length geodesic if and only if  $\theta \in W$. $\Vol(B_r) = \int_0^r
\int_{S_s} \omega(s,\theta) d\theta ds$.
From the above definitions of  $W$ and $S_l$, we know 
$| \Vol(S_l,d \theta) - \Vol( W,d \theta)| \leq \ep(l) \to 0$ as $l \to \infty$, 
with $\ep(l)\to 0$, monotonically,
where here $d \theta$ refers to
the standard volume form on $S^{n-1}_1(0)$.
Also $A_{K,L}- \ti A_{K,L} \subset  \{ (r,\theta) \ | \ r \in [K,L],
\theta \in S_K-W \}.$
This means
\begin{eqnarray}
\label{ep_def}
\Vol (A_{K,L}- \ti A_{K,L} ) &&\leq \Vol( \{ (r,\theta) \ | \ r \in [K,L],
\theta \in S_K -W \}) \cr
&& = \int_K^L \int_{S_K-W} \omega(r,\theta) d\theta dr \cr
&& \leq \ep(K) L^n.
\end{eqnarray}
}

\cmt{final sequence $z_k$ will be a subsequence of this:}

We first claim that there exists a sequence of points $z_k\in M$
with $d_{g}(y_\infty, z_k)\to\infty$ as $k\to\infty$, each of which lies within some 
(length minimising) geodesic ray $\ga_k:[0,\infty)\to (M,g)$ of
infinite length emanating from $y_\infty$, such that 
$$\VolB_{g}(z_k,R)<  (100R)^n  \frac{\ep}{v}.$$
Suppose the claim is false. Then there exists $\ti R>0$ such that for any $p\in M$ with 
$d_{g}(y_\infty, p)\geq \ti R$, and with $p$ lying on a geodesic ray, we have $\VolB_{g}(p,R)\geq\si_0$, where 
$\si_0= (100R)^n \frac{\ep}{v}$.

Clearly we are free to increase $\ti R$ if helpful, and so we may
assume that $\ti R\geq 10R$. We will also need $\ti R$ to be large compared with 
$R^{n+1}$, and it will be sufficient to ask that 
$\ti R\geq C(n) R^{n+1}/(\de v)$, for $C(n)$ that will be determined during the proof. 
Furthermore, we ask that $\ti R$ is large enough so that $\ep(\ti R)$ is small enough. 
It will be sufficient to ask that $\ep(\ti R)\leq \ep \de /(2\si_0)$.

Let $N\geq 1$ be the largest number of disjoint balls $B_{g}(p_i,R)$ that
we can pick that lie within $A_{ \ti R, 3 \ti R}$, but whose
centre points $p_i$ lie within the subset $\ti A_{\ti R, 3 \ti R}$.
Fix such a collection of centre points $p_1,\ldots,p_N$.
We see that $N$ has an upper bound because condition \eqref{DRHiv} and 
Bishop-Gromov tell us that 
\beq
\label{another_N_upper}
\ep (3 \ti R)^n\geq\VolB_{g}(y_\infty,3 \ti R)
\geq \sum_{i=1}^N \VolB_{g}(p_i,R)\geq N\si_0.
\eeq
Moreover, just as in the proof of Lemma \ref{volume_control_lem} (see the argument just after the Inequality \eqref{Nupperbd}), 
we can be sure that the balls $B_{g}(p_i,3R)$ cover the modified annulus
$\ti A_{ \ti R, 3 \ti R}$.
However for our argument, we need a covering of $ A_{ \ti R, 3 \ti  R}$. In order to achieve this we
consider further points $p_{N+1}, \ldots, p_{N+M}$ , such that 
$\{B_{g}(p_{N+i},R)\}_{i=1}^M$ is a maximal disjoint set of balls 
with $p_{N+1}, \ldots, p_{N+M} \in A_{ \ti R, 3 \ti R} \setminus \ti A_{ \ti R, 3 \ti R}$. Note here that we don't require that the balls also live in  $A_{ \ti R, 3 \ti R} \setminus \ti A_{ \ti R, 3 \ti R}.$ The set of balls
$\{B_{g}(p_{N+i},3R)\}_{i=1}^{M}$ is then a covering of $A_{ \ti R, 3 \ti R} \setminus \ti A_{ \ti R, 3 \ti R},$ and hence the set of balls $\{B_{g}(p_{i},3R)\}_{i=1}^{N+M}$ is  a covering of $A_{ \ti R, 3 \ti R}$. 

\cmt{annoying double use of $M$, but not worth changing at this stage - unlikely to cause confusion}

If $B_{g}(p_{N+i},R)$ intersects $\ti A_{ \ti R, 3 \ti R}$ for
some $i \in \{1,2, \ldots, M\}$, then throw the ball
$B_{g}(p_{N+i},R)$ away: It 
will be covered by
$B_{g}(p_j,5R),$
for some
$j \in \{1, 2, \ldots, N\}$, and the new set of balls
$B_{g}(p_i,5R)_{i=1}^{N+M}$ of radius $5R$ (we do not change the
name of the centre points of the balls, or $M$, even though some balls have been removed) still covers $A_{
  \ti R, 3 \ti R}.$ 
We can bound the number $M$ of remaining balls because the remaining balls $\{B_{g}(p_{N+j},R)\}_{j=1}^{M}$ with centre points in $A_{
  \ti R, 3 \ti R} \setminus \ti A_{ \ti R, 3 \ti R}$ have very little
volume (relatively), since  they are disjoint and completely contained in  
$A_{  \ti R-R, 3 \ti R+R} \setminus \ti A_{ \ti R, 3 \ti R}$. Indeed, 
because $R\geq 1$, condition \eqref{DRHii} gives
%
%
\beqa
\label{Mbound}
{\de } M & \leq \sum_{j=1}^M \VolB_{g}(p_{N+j}, R) \\
&  \leq \Vol_{g}( A_{ \ti R-R, 3\ti R+R} \setminus \ti A_{ \ti R, 3\ti
R}) \\
& = \Vol_{g}((A_{\ti R-R,\ti R}  \cup A_{\ti R,3 \ti R}\cup A_{3 \ti R, 3 \ti R + R}) \setminus \ti A_{ \ti R, 3\ti
R}) \\
& \leq \om_n( \ti R^n - (\ti R -R)^n) +   \om_n ( (3 \ti R + R)^n -( 3 \ti R)^n) + 
\ep(\ti R) \ti R^n\\
& \leq   C(n) \ti R^{n-1}R + \ep(\ti R) \ti R^n\\
& \leq \frac{\ep \de}{\si_0} \ti R^n 
\eeqa
if $\ti R$ is as large as we previously asked, where we have used that 
$(a+R)^n-a^n\leq C(n)a^{n-1}R$ for $a\geq R$, 
by the mean value theorem, to obtain the penultimate line.
Hence
\beqa
(N+ M)  & \leq \frac{ 2\ep  }{\si_0}(3 \ti R)^n \cr
& <  \frac{  v \ti R^n}{ (5R)^n}
 \label{uppity}
\eeqa
from the definition of $\si_0$.
Now take a ball $B_{g}(q,\ti R)$ of radius $\ti R$ and centre $q$ sitting in $A_{\ti R, 3 \ti R}$.
We can cover $B_{g}(q,\ti R)$ with the $N+M$ balls of radius $r= 5R$ that we have just constructed, since they cover
$A_{\ti R, 3 \ti R}$. But then by \eqref{DRHiii}, we must have 
$N+M \geq \frac{ v \ti R^n}{(5R)^n}.$ 
This contradicts  \eqref{uppity}, and completes the proof of the claim that we can find the points $z_k$.

We now use the points $z_k$ of this claim to extract a limit. Indeed, by condition \eqref{DRHi}, and condition \eqref{DRHii} applied to each of the points $z_k$, 
we know that we can apply the 
Cheeger-Gromov-Hamilton compactness theorem, and passing to a subsequence in $k$ we obtain smooth convergence
$(M,g, z_k)\to (M_\infty,h,z_\infty)$ to a limit which still has $\Ric\geq 0$, but now also contains a (length minimising) geodesic line through $z_\infty$, and hence splits isometrically as a product $\R\times (M',g')$, where $(M',g')$ is an $(n-1)$-dimensional Riemannian manifold
with nonnegative Ricci curvature. This is Conclusion \ref{DRC1}.
We may write $z_\infty=(0,y_\infty')$ for some point $y_\infty'\in M'$.

By passing the claim to the limit, we have
\beq
\label{claim2_conseq}
\VolB_{h}(z_\infty,R)\leq C(n)\frac{\ep}{v} R^n.
\eeq
By Bishop-Gromov, this volume ratio estimate extends to larger radii (e.g. to radius $2R$) and
subsequently  also to the factor $M'$,
giving
\beq
\label{claim2_conseq2}
\VolB_{g'}(y_\infty',R)\leq C(n)\frac{\ep}{v} R^{n-1},
\eeq
for some (possibly different) $C=C(n)$. 
This is Conclusion \ref{DRC4}.
We also have, for any $z \in M'$, that
$2\VolB_{g'}(z,1) =
\Vol_h( B_{g'}(z,1) \times [-1,1]) \geq \VolB_h((0,z),1) \geq \de$ and hence
$\VolB_{g'}(z,1) \geq \frac{\de}{2}$.
This is Conclusion \ref{DRC2}.

Next we pass condition \eqref{DRHiii} to the limit to find that
if we have a ball $B_{h}(y,L)$, for $y\in M_\infty$ and $L>0$, that is covered by $N$ 
balls of radius $r\geq R$ (with respect to $h$) then we must have $N\geq v L^n r^{-n}$.
Because of the splitting of $M_\infty$, this then implies Conclusion \ref{DRC3}.

\cmt{Note to us: given such a covering, then we can multiply the radius by $100$, and use of the order of $L/r$ copies of the covering, shifted by $r$ in the $\R$ direction, to cover the 3D ball. So the number of balls in the $n$-dim covering is of the order of $NL/r$...}

From Conclusion \ref{DRC3}, we immediately see that $M'$ cannot be compact, because in that case we would be able to cover the whole space with $N=1$ balls of a fixed radius, and then allow $L$ to diverge to infinity to obtain a contradiction.
This completes the proof of the dimension reduction lemma \ref{geometryatinfinity}.
\end{proof}

\emph{Acknowledgements:} This work was supported by EPSRC grant number EP/K00865X/1.
Key parts of this paper originated at the 2013 Oberwolfach PDE meeting. We would like to thank the MFO and the organisers Alice Chang, Camillo De Lellis and Reiner Schaetzle for their invitation.

\small

{\sc MS: institut f\"ur analysis und numerik (IAN),
universit\"at magdeburg, universit\"atsplatz 2,
39106 magdeburg, germany}

{\sc PT: mathematics institute, university of warwick, coventry, CV4 7AL,
uk}\\
\url{http://www.warwick.ac.uk/~maseq}

\begin{thebibliography}{99}

\parskip0pt 
\itemsep0pt

\bibitem{cheeger_colding_vol_cgnce} \textsc{J. Cheeger} and \textsc{T. H. Colding},
\emph{On the structure of spaces with Ricci curvature bounded below. I. \/}
J. Differential Geom. {\bf 46} (1997) 406--480.

\bibitem{CE} \textsc{J. Cheeger} and \textsc{D. G. Ebin},
`Comparison theorems in Riemannian Geometry.'
North-Holland, 1975.

\bibitem{strong_uniqueness} 
\textsc{B.-L. Chen}, \emph{Strong uniqueness of the Ricci flow.\/} 
J. Differential Geometry 
{\bf 82} (2009) 363--382.

\bibitem{ChenXuZhang} \textsc{B.-L. Chen}, \textsc{G. Xu} and \textsc{Z. Zhang},
\emph{Local pinching estimates in 3-dim Ricci flow.\/}
Mathematical Research Letters, {\bf 20} (2013) 845--855.

\bibitem{ChowKnopf} \textsc{B. Chow and D. Knopf}, `The Ricci flow: an introduction.'
Mathematical Surveys and Monographs, {\bf 110}. American Mathematical Society, Providence, RI, 2004.

\bibitem{CLN} \textsc{B. Chow}, \textsc{P. Lu} and \textsc{L. Ni},
`Hamilton's Ricci Flow,' Graduate Studies in Mathematics {\bf 77}. AMS (2006).

\bibitem{colding_vol_cgnce} \textsc{T. H. Colding}, 
\emph{Ricci Curvature and Volume Convergence.\/}
Ann. Math. {\bf 145} (1997) 477--501.

\bibitem{GHL} \textsc{S. Gallot}, \textsc{D. Hulin} and \textsc{J. Lafontaine},
`Riemannian Geometry' (Second edition).
Springer-Verlag, 1993.

\bibitem{GT3} 
\textsc{G. Giesen} and \textsc{P. M. Topping},
\emph{Ricci flows with unbounded curvature.\/} 
Math. Zeit., {\bf 273} (2013) 449--460.
\url{http://arxiv.org/abs/1106.2493}

\bibitem{RF_bursts} 
\textsc{G. Giesen} and \textsc{P. M. Topping},
\emph{Ricci flows with bursts of unbounded curvature.\/} 
Comm. Partial Differential Equations. {\bf 41} (2016) 854--876.
\url{http://arxiv.org/abs/1302.5686}

\bibitem{ham3D} \textsc{R. S. Hamilton}, \emph{Three-manifolds with positive Ricci curvature.\/} J. Differential Geometry {\bf 17} (1982) 255--306.

\bibitem{formations} \textsc{R. S. Hamilton}, 
\emph{The formation of singularities
in the Ricci flow.\/} Surveys in differential geometry, Vol. II
(Cambridge, MA, 1993) 7--136, Internat. Press, Cambridge, MA, 1995.

\bibitem{hochard} \textsc{R. Hochard}, 
\emph{Short-time existence of the Ricci flow on complete, non-collapsed $3$-manifolds with Ricci curvature bounded from below.\/}
\url{http://arxiv.org/abs/1603.08726}


\bibitem{P1} \textsc{G. Perelman}, 
\emph{The entropy formula for the Ricci flow 
and its geometric applications.\/}
\url{http://arXiv.org/abs/math/0211159}v1  (2002).



\bibitem{shi} \textsc{W.-X. Shi}, \emph{Deforming the metric on complete Riemannian manifolds.\/} J. Differential Geometry {\bf 30} (1989), 223--301.

\bibitem{MilesCrelle3D} \textsc{M. Simon}, \emph{Ricci flow of non-collapsed three manifolds whose Ricci curvature is bounded from below.\/} 
Journal f\"ur die reine und angewandte Mathematik. {\bf 662} (2012) 59--94.

\bibitem{MilesGandT} \textsc{M. Simon}, \emph{Local smoothing results for the Ricci flow in dimensions two and three.\/} Geometry and Topology. {\bf 17} (2013) 2263--2287.


\bibitem{MilesJGA} \textsc{M. Simon}, \emph{Ricci flow of regions with curvature bounded below in dimension three.\/} To appear, Journal of Geometric Analysis.
\url{http://arxiv.org/abs/1407.1251v1}

\bibitem{ST2} \textsc{M. Simon} and \textsc{P. M. Topping}, 
\emph{Local mollification of Riemannian metrics using Ricci flow, and Ricci limit spaces.\/} 
\url{http://arxiv.org/abs/1706.09490}


\bibitem{TW} \textsc{G. Tian} and \textsc{B. Wang}, \emph{On the structure of almost Einstein manifolds.\/} J. Amer. Math. Soc. {\bf 28} (2015), 1169--1209.


\bibitem{RFnotes} \textsc{P. M. Topping}, `Lectures on the Ricci flow'.
L.M.S. Lecture notes series {\bf 325} C.U.P. (2006)
\url{http://www.warwick.ac.uk/~maseq/RFnotes.html}

\end{thebibliography}
\end{document}